\documentclass[reqno, 11pt]{amsart}
\usepackage[margin=1in]{geometry}
\usepackage{hyperref, amsrefs, graphicx, xcolor, amssymb}

\numberwithin{equation}{section}
\newtheorem{theorem}{Theorem}[section]
\newtheorem{corollary}[theorem]{Corollary}
\newtheorem{lemma}[theorem]{Lemma}
\newtheorem{proposition}[theorem]{Proposition}
\newtheorem{remark}[theorem]{Remark}

\newtheorem{definition}[theorem]{Definition}

\newcommand{\Per}{\operatorname{Per}}
\newcommand{\R}{{\mathbb{R}}}

\newcommand{\N}{{\mathbb{N}}}
\newcommand{\cI}{{\mathcal I}}
\newcommand{\cD}{{\mathcal D}}
\newcommand{\cS}{{\mathcal S}}
\newcommand{\eps}{{\varepsilon}}
\allowdisplaybreaks

\begin{document}
\title[Nonlocal minimal surfaces are generically unique and smooth]{Nonlocal minimal surfaces are generically unique, \\
and smooth in one extra dimension}

\author[S.~Dipierro]{Serena Dipierro}  \address{Serena Dipierro\\  Department of Mathematics and Statistics\\
	University of Western Australia,\\
	35 Stirling Highway, WA 6009 Crawley, Australia}
	\email{\href{mailto:serena.dipierro@uwa.edu.au}{serena.dipierro@uwa.edu.au}}
		
	\author[X.~Fern\'{a}ndez-Real]{Xavier Fern\'{a}ndez-Real}  \address{Xavier Fern\'{a}ndez-Real\\
School of Basic Sciences\\	
	École Polytechnique Fédérale de Lausanne\\
Station 8, 1015 Lausanne, Switzerland}
\email{\href{mailto:xavier.fernandez-real@epfl.ch}{xavier.fernandez-real@epfl.ch}}

	\author[E.~Valdinoci]{Enrico Valdinoci}  \address{Enrico Valdinoci\\Department of Mathematics and Statistics\\
	University of Western Australia,\\
	35 Stirling Highway, WA 6009 Crawley, Australia}
\email{\href{mailto:enrico.valdinoci@uwa.edu.au}{enrico.valdinoci@uwa.edu.au}}

\begin{abstract}
We study the fractional Plateau problem for $s$-minimal sets with prescribed
exterior datum. For fixed exterior data, minimizers need not be unique, and
singularities may occur beyond the critical dimension. We first prove a
generic uniqueness theorem: along any strictly increasing family of exterior
data, nonuniqueness occurs for at most countably many parameters. We then
show that one can make arbitrarily small perturbations for which
the interior regularity theory improves by one dimension. 
\end{abstract}

\keywords{Nonlocal minimal surfaces, uniqueness, regularity, genericity.}
\subjclass{53A10, 35R11}

\maketitle 

\section{Introduction}
Nonlocal minimal surfaces arise as the natural generalization of classical minimal surfaces when long-range interactions are taken into account. While classical minimal surfaces are local minimizers of the classical perimeter functional, nonlocal minimal surfaces minimize a nonlocal version of perimeter, in which the interaction between points extends over the entire space.

This framework has been introduced and extensively studied
by Caffarelli, Roquejoffre,  and Savin
in~\cite{MR2675483}.
For disjoint (measurable) sets $E$, $F \subseteq \R^n$ and a fractional parameter $s \in (0,1)$, the $s$-interaction between~$E$ and~$F$ is defined as
$$ I_s(E,F):=\iint_{E\times F}\frac{dx\, dy}{|x-y|^{n+s}}.$$
Given a reference domain~$\Omega$, the $s$-perimeter of~$E$ in~$\Omega$ is
$$\Per_s(E,\Omega):=I_s(E\cap\Omega,E^c\cap\Omega)
+I_s(E\cap\Omega,E^c\cap\Omega^c)+
I_s(E\cap\Omega^c,E^c\cap\Omega),$$
where the superscript~``$c$'' denotes the complement set in~$\R^n$.

A set~$E$ is called $s$-minimal in~$\Omega$ (and~$\partial E$ is called an $s$-minimal surface in~$\Omega$) if,
for every set~$F$ such that~$F\setminus\Omega$=$E\setminus\Omega$, we have that~$\Per_s(E,\Omega)\le\Per_s(F,\Omega)$.

As $s \nearrow1$, the fractional perimeter converges (after suitable normalization) to the classical perimeter, and nonlocal minimal surfaces converge to classical minimal surfaces (see~\cite{MR3586796, MR1942130, MR2782803, MR2765717}).
Nonlocal minimal surfaces are also related to front propagation (see~\cite{MR2564467}) and long-range phase coexistence models (see~\cite{MR2948285}). Despite several similarities to their classical counterparts, nonlocal minimal surfaces often exhibit peculiar behaviors: for example, they tend to stick at the boundary of the domain, thereby producing boundary jump discontinuities
and the sudden divergence of the
boundary derivative (see~\cite{MR3596708}).

The goal of this paper is to establish the genericity of the uniqueness and smoothness of minimizers with respect to the external data. For nonlocal minimal surfaces, this is far from obvious and certainly cannot be taken for granted: in fact, the only generic result currently available for nonlocal minimal surfaces is that of~\cite{MR4104542}, which shows that stickiness and boundary discontinuity are generic and can be produced by arbitrarily small perturbations of the external data,
thus suggesting a ``robust behavior'' of the specific ``pathologies'' created by long-range interactions.
It is therefore particularly interesting that, in contrast, properties related to well-posedness and regularity, such as uniqueness and interior smoothness, also turn out to be generic.

\subsection{Generic uniqueness of nonlocal minimal surfaces}

We recall that multiplicity for nonlocal minimal sets can occur,
namely there exist simple cases of domains and external data
for which there are at least two different $s$-minimal sets,
see e.g.~\cite[Theorem~1.6]{MR4184583}
for an explicit example. We show, however, that uniqueness of minimizers is a generic property and can be recovered up to arbitrarily small 
modifications of the external data. More precisely, we have:


\begin{theorem}\label{gene-uni}
Let $\Omega\subset \R^n$ be a bounded domain. Let~$I\subseteq\R$ and, for all~$t\in I$, consider a family of sets~$G^{(t)}\subseteq\R^n$ and assume that, for each~$t\in I$, there is an admissible set of finite
$s$-perimeter with exterior datum~$G^{(t)}$.  Assume, moreover, that for all~$t_1$, $t_2\in I$ with~$t_1<t_2$, it holds that~$ G^{(t_1)}\setminus\Omega\subsetneq
G^{(t_2)}\setminus\Omega$.

Then, there exists a countable set $N\subset I$ such that, for all $t\in I\setminus N$, there is a unique $s$-perimeter minimizer in $\Omega$ with exterior datum $G^{(t)}$. 
\end{theorem}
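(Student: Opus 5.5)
The argument rests on two facts about $s$-minimizers. The first is existence and compactness: by the direct method, for each $t$ the set of minimizers with datum $G^{(t)}$ is nonempty and compact in $L^1_{\rm loc}$. The second is an ordering principle. If $t_1<t_2$, $E_1$ is any minimizer for $G^{(t_1)}$ and $E_2$ any minimizer for $G^{(t_2)}$, I claim $E_1\subseteq E_2$ up to null sets.

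To prove the claim, I use the submodularity of the $s$-perimeter,
$$\Per_s(E_1\cup E_2,\Omega)+\Per_s(E_1\cap E_2,\Omega)\le \Per_s(E_1,\Omega)+\Per_s(E_2,\Omega).$$
Because $G^{(t_1)}\setminus\Omega\subseteq G^{(t_2)}\setminus\Omega$, the union has exterior datum $G^{(t_2)}$ and the intersection has exterior datum $G^{(t_1)}$. They are therefore competitors for $E_2$ and $E_1$ respectively, and both must be minimizers, with equality in submodularity. A computation shows the defect in submodularity is
$$2\iint_{(E_1\setminus E_2)\times(E_2\setminus E_1)}|x-y|^{-n-s}\,dx\,dy$$
(up to the usual bookkeeping in and outside $\Omega$), where $E_2\setminus E_1$ includes the exterior set $(G^{(t_2)}\setminus G^{(t_1)})\setminus\Omega$. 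That set has positive measure by strict inclusion; I assume $\subsetneq$ means a difference of positive measure, since otherwise I would interpret it that way. The defect is then zero only if $|E_1\setminus E_2|=0$, because the kernel is positive. This gives $E_1\subseteq E_2$.

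With the ordering in hand, I define for each $t$ the maximal and minimal minimizers $\overline E_t$ and $\underline E_t$. The same union and intersection argument at a single $t$ shows that the union of two minimizers is again a minimizer. To get a genuine maximal element, I take a countable dense subfamily in $L^1$ and use compactness and lower semicontinuity, so that the union of all minimizers, up to null sets, is itself a minimizer. Let $m(t):=|\overline E_t\cap\Omega|-|\underline E_t\cap\Omega|\ge0$. Uniqueness at $t$ is equivalent to $m(t)=0$: if $m(t)=0$, every minimizer is squeezed between $\underline E_t$ and $\overline E_t$, which then coincide in $\Omega$, and the exterior data agree.

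The counting step uses the ordering. For $t_1<t_2$ we have $\overline E_{t_1}\subseteq\underline E_{t_2}$, so the sets $(\overline E_t\setminus\underline E_t)\cap\Omega$ are pairwise disjoint (up to null sets) as $t$ ranges over $I$. They all lie in $\Omega$, which has finite measure, so for each $k$ at most $k|\Omega|$ indices satisfy $m(t)>1/k$. Hence $N:=\{m>0\}$ is countable.

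I expect the main obstacle to be the strict ordering lemma, namely showing that the interaction defect is exactly the quantity above and that its vanishing forces $|E_1\setminus E_2|=0$ when only the exterior data differ. The cleanest route is the pointwise identity $\chi_{A\cup B}(x)\chi_{(A\cup B)^c}(y)+\chi_{A\cap B}(x)\chi_{(A\cap B)^c}(y)=\chi_A(x)\chi_{A^c}(y)+\chi_B(x)\chi_{B^c}(y)-\chi_{A\setminus B}(x)\chi_{B\setminus A}(y)-\chi_{B\setminus A}(x)\chi_{A\setminus B}(y)$, integrated over the pairs counted by $\Per_s(\cdot,\Omega)$. The negative term includes pairs with $x\in(E_1\setminus E_2)\cap\Omega$ and $y$ in the exterior difference, and these pairs give a strictly positive contribution whenever $|E_1\setminus E_2|>0$.
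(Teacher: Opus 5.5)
Your proposal is correct and is essentially the paper's argument. It coincides almost step for step with the general-kernel proof in Subsection~\ref{ssec:more_general_kernels}: extremal minimizers, the ordering $E^+_\tau\subseteq E^-_t$ for $\tau<t$ obtained from the lattice identity and the positivity of the kernel on $\Omega\times\Omega^c$, and pairwise disjoint gaps $(E^+_t\setminus E^-_t)\cap\Omega$. The main proof in Section~\ref{LAn2} differs only in that Corollary~\ref{Cor1p}, applied with equal data, shows any two minimizers for the same datum are nested, so it uses the gap $\hat E\setminus\hat F$ between any two distinct minimizers and does not need to construct extremal minimizers via compactness.
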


We remark that, throughout the paper, all identities between sets (equalities, inclusions, and strict inclusions) are assumed to hold up to sets of measure zero.

As it will be clear from its proof, Theorem~\ref{gene-uni}
actually works for a very wide class of anisotropic nonlocal minimal surfaces as well, since it only relies
on the weak maximum principle (in particular, general nonnegative and symmetric interaction kernels can be taken into account; see e.g.~\cite{MR3981295} and the references therein for nonlocal minimal surfaces in
anisotropic settings). We refer to Proposition~\ref{prop:general-kernel-generic-uniqueness} for a more precise statement in this direction.

\subsection{Generic regularity of nonlocal minimal surfaces}

We now present our formulation of the generic regularity statement proved in this paper.  The statement is modeled on the classical generic regularity theorems for the Plateau problem, starting from the work of Hardt--Simon in the first singular dimension~\cite{MR809969}. In the nonlocal Plateau problem, however, the prescribed object is not a codimension-two boundary, but rather the exterior datum outside the domain. Accordingly, the perturbations that we will consider below are small modifications of this exterior datum.

The critical dimension for nonlocal minimal cones is
\begin{equation}\label{DIMECR}
n_s^*:=\max\big\{m\in\N\text{ such that every $s$-minimal cone in $\R^m$ is a halfspace}\big\}.
\end{equation}
Due to improvement of flatness results, the dimension~$n_s^*$
is the one ensuring interior regularity of~$s$-minimal surfaces. The exact value of~$n_s^*$ is not known in general. We recall that $n_s^*\ge2$ for all $s\in(0,1)$ and that $n_s^*\ge7$ for $s$ sufficiently close to~$1$; see~\cite{MR3090533, MR3107529}.

 The point of the theorem below is that this estimate improves by one dimension along a generic perturbation of the exterior datum, and gives complete regularity in the first dimension above the critical one.

\begin{theorem}[Generic regularity for $n = n_s^*+1$]\label{thm:main1}
Let $s\in(0,1)$, let $n = n_s^*+1$, and let $G\subseteq\R^n\setminus B_1$ be an exterior datum such that $\partial (  G\cup B_1)$ has locally finite $(n-1)$-Minkowski content. 

Then, there are $L^1_{\rm loc}$ small perturbations $G'$ of $G$ (see Definition~\ref{def:perturbations}) with the property that there exists a smooth $s$-perimeter minimizer $E'$ in $B_1$ with $E' = G'$ in $\R^n\setminus B_1$. 

Moreover, $E'$ is the unique minimizer.  
\end{theorem}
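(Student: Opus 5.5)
We will follow the Hardt--Simon / Chodosh--Mantoulidis--Schulze strategy, adapted to exterior data. We work with a monotone one-parameter family of perturbations $G^{(t)}$, $t\in(-\delta,\delta)$. Concretely, we enlarge or shrink $G$ by a thin layer near $\partial B_1$, for instance $G^{(t)}=G\cup(B_{1+t}\setminus B_1)$ for $t>0$ and $G\setminus(B_{1+|t|}\setminus B_1)$ for $t<0$. The Minkowski content assumption on $\partial(G\cup B_1)$ should guarantee two things: these are $L^1_{\rm loc}$-small perturbations in the sense of Definition~\ref{def:perturbations}, and the inclusions $G^{(t_1)}\setminus B_1\subsetneq G^{(t_2)}\setminus B_1$ are strict. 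It also gives finite $s$-perimeter admissible competitors.

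\textbf{Step 1 (uniqueness).} Theorem~\ref{gene-uni} shows that for all $t$ outside a countable set $N$ the minimizer $E_t$ is unique. Hence in every interval of $t$ we can pick parameters with unique minimizers, which already gives the last assertion once regularity is established for such a $t$.

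\textbf{Step 2 (ordering and foliation).} The weak maximum principle (comparison) shows that for $t_1<t_2$ outside $N$ we have $E_{t_1}\subseteq E_{t_2}$. The strong maximum principle for $s$-minimal sets, using that the data differ on a set of positive measure, should upgrade this to $\partial E_{t_1}\cap\partial E_{t_2}\cap B_1=\emptyset$ away from singular sets. The reason is that the nonlocal mean curvature difference at a touching point would be strictly signed. So the boundaries $\partial E_t\cap B_1$ are disjoint leaves.

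\textbf{Step 3 (singular set is discrete, and the key counting).} In dimension $n=n_s^*+1$, dimension reduction (stated via \eqref{DIMECR} and the improvement of flatness results) gives that the singular set $\Sigma_t$ of $E_t$ in $B_1$ is discrete. Every blow-up there is a nontrivial $s$-minimal cone in $\R^n$ that is smooth away from the origin. The aim is to show that $\Sigma_t\neq\emptyset$ for only countably many, or at least not all, $t$ in a full-measure set. Our plan is to establish a quantitative separation estimate. If $x_0\in\Sigma_{t_0}$, then the leaves $\partial E_t$ for $t$ close to $t_0$ are separated from $x_0$ by a distance $\gtrsim|t-t_0|^{\gamma}$ with $\gamma<1/(n-1)$ strictly, or more weakly with an exponent forcing the set of singular parameters to have measure zero. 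We get this by comparing $E_t$ with $E_{t_0}$. The difference generates a positive supersolution of the linearized (Jacobi-type) nonlocal operator $L_{E_{t_0}}$ on $\partial E_{t_0}$, bounded below by $c|t-t_0|$ away from the origin because the exterior data gain a positive-measure layer. Near the cone we invoke a growth-rate bound for positive Jacobi fields on nonflat $s$-minimal cones, i.e.\ homogeneity of the lowest positive solutions of $L_{\mathcal C}u=0$, to control how fast the separation decays toward $x_0$. A covering/packing argument then shows that singular points of distinct leaves, which are disjoint, fill a region of volume at least $\sum_t r_t^n$ with $r_t\gtrsim$ separation. Hence the set of $t$ with $\Sigma_t\ne\emptyset$ has zero measure, and we can pick $t\notin N$, arbitrarily small, with $E_t$ smooth.

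\textbf{Main obstacle.} The hard step is Step 3. We need a nonlocal analogue of the Hardt--Simon foliation/Jacobi field growth estimate, namely the positive-solution Liouville/homogeneity theory for the nonlocal Jacobi operator on singular $s$-minimal cones. We also need to propagate the strictly positive lower bound from the exterior data into $B_1$ via a Harnack inequality for this operator. We would first try a cruder version: use only the strong maximum principle plus compactness to obtain separation $\ge c|t-t_0|^{\gamma}$ for some $\gamma$. Combined with the discreteness of $\Sigma_t$ and a sharper counting via monotonicity/density, this may already suffice to conclude measure zero.
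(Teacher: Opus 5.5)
\textbf{There is a genuine gap in Step 3.} Step 3 is where the whole proof lives, and it is not carried out. You reduce everything to a quantitative separation estimate near singular points. You propose to obtain it from a Hardt--Simon-type analysis of a nonlocal Jacobi operator on singular $s$-minimal cones: homogeneity/Liouville theory for positive Jacobi fields, plus a Harnack inequality carrying the lower bound $c|t-t_0|$ from the exterior layer into $B_1$. None of this is available, and you do not supply it, so as written nothing excludes a singular point at every $t$.

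The fallback does not close either. Strong maximum principle plus compactness gives only qualitative disjointness of the leaves, not a power rate. Moreover, the packing argument needs a rate far better than linear. If $|x_t-x_{t'}|\ge c|t-t'|^{\gamma}$ for singular points $x_t\in\Sigma_t$, the singular times are the image of a subset of $B_1$ under a $1/\gamma$-H\"older map. A dimension count makes this set null only once $1/\gamma$ exceeds the dimension of $\bigcup_t\Sigma_t$, which is a priori only $\le n$. With linear separation ($\gamma=1$, the best one gets cheaply), counting gives nothing.

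Your family also has problems. $G\cup(B_{1+t}\setminus B_1)$ has no structure that yields even linear separation without the missing linearized theory. And strict monotonicity does not follow from the Minkowski content hypothesis: if $G\supseteq B_2\setminus B_1$, the family is constant for $t\in(0,1)$.

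\textbf{What the paper does instead.} The paper needs only linear separation, and gets it for free from a different family. The sets $A_t=\bigcup_{0\le q\le t}\big((1+q)A_0+B_q\big)$ satisfy $\lambda A_\tau+B_r\subseteq A_t$ with $r=\frac{t-\tau}{1+\tau}$ and $\lambda=1+r$. So for $|v|<r$, the set $\lambda E^{(\tau)}+v$ is minimizing in $B_\lambda(v)\supseteq B_1$ with exterior datum contained in $G^{(t)}$. Comparison then gives Lemma~\ref{lem:linear-retraction}, and hence the space-time singular set is a Lipschitz graph $t=\tau(x)$. The Minkowski content is used only to show $G^{(t)}\to G^{(0)}$ in $L^1_{\rm loc}$, because the dilation moves all of $\partial A_0$.

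The idea that replaces your counting is a cone-splitting argument. Set $f(x)=\omega_s(E^{(\tau(x))},x)$. The abstract lemma of~\cite{MR4179834} shows that, outside countably many times, every singular point $x_0$ is a limit of other singular points $x_k$ with $f(x_k)\to f(x_0)$. Blowing up at scale $|x_k-x_0|$, using the monotonicity formula, gives singular cones $C,D$ with $D-e\subseteq C$ or the reverse inclusion, by the nesting of the $E^{(t)}$. Lemma~\ref{lem:homogeneous-inclusinos} then gives $D\subseteq C$ (or the reverse), and the strict maximum principle gives $D=C$, so $D$ is monotone in direction $e$. Lemma~\ref{lem:monotone-cones} makes $D$ invariant along $e$, producing a singular $s$-minimal cone in $\R^{n_s^*}$, a contradiction.

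Combined with the countability of nonuniqueness times and $L^1_{\rm loc}$-discontinuity times, this makes the singular times countable. That is what yields arbitrarily small $t$ with a unique smooth minimizer. Your closing remark about counting ``via monotonicity/density'' points in this direction, but the monotone-cone rigidity is the step that actually does the work, and it is absent from your proposal.
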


The terminology used in the statement of Theorem~\ref{thm:main1} can be made
precise, according to the following definitions.

\begin{definition}[Perturbations]
\label{def:perturbations}
    Let $\Omega\subset \R^n$ be a bounded domain. Given $\eps > 0$ and $G\subset \R^n\setminus \Omega$, we say that $G_\eps\subset \R^n\setminus \Omega$ is a perturbation of $G$ in $L^1_{\rm loc}$ of size $\eps$  if it holds 
    \[
    \int_{\R^n} |\chi_G - \chi_{G_\eps}|d\gamma \le \eps, 
    \]
    where $\chi_A$ is the characteristic function of a set $A\subset \R^n$, and $\gamma$ is a given Gaussian measure\footnote{One may also choose other metrics characterizing small $L^1_{\rm loc}$ perturbations.
    We chose this setting since it allows a clear and simple statement.    }. We say that $\{G_{\eps_k}\}_{k\in \N}$ is a family of small perturbations if each of them is a perturbation of size $\eps_k$ with $\eps_k\searrow 0$.
\end{definition}
\begin{definition}[Minkowski content]\label{def:minkowski_content}
    We recall that a set $E\subset \R^n$ has locally finite $(n-1)$-dimensional upper Minkowski content if, for any $R > 1$, 
    \[
    \limsup_{\delta\searrow0}\frac{|E_\delta \cap B_R |}{\delta}<+\infty,
\qquad{\mbox{where}}\qquad E_\delta :=\bigcup_{p\in E}
B_\delta (p).
\]
\end{definition}

In particular, given the above settings, Theorem~\ref{thm:main1}
ensures that
there are generic perturbations of the exterior datum yielding smooth minimizers
(which are also unique). 

More generally, to discuss interior regularity, it is customary to divide the boundary of a
set~$E$ into two portions, namely~${\rm Reg}(\partial E)$,
consisting of points where~$\partial E$ is locally a hypersurface of class~$C^\infty$
(or equivalently of Lipschitz class, see~\cite[Theorem~1.1] {MR3680376})
and~${\rm Sing}(\partial E):=\partial E\setminus{\rm Reg}(\partial E)$. Then,
Theorem~\ref{thm:main1}
says that, up to small perturbations of the exterior datum, the boundary of the minimizer is formed exclusively of regular points in dimension $n = n_s^*+1$. 

For higher dimensions, the general dimension-reduction theory for nonlocal minimal surfaces gives singular sets of Hausdorff dimension at most $n-n_s^*-1$; see~\cite[Theorems~10.3 and~10.4]{MR2675483} 
(or~\cite[page~871]{MR3107529}). By allowing arbitrarily small perturbations, we can improve this size by one full dimension as well: 

\begin{theorem}[Generic regularity for $n \ge n_s^*+2$]\label{thm:main2}
Let $s\in(0,1)$, let $n \ge n_s^*+2$, and let $G\subseteq\R^n\setminus B_1$ be an exterior datum such that $\partial ( G\cup B_1)$ has locally finite $(n-1)$-Minkowski content. 

Then, there are $L^1_{\rm loc}$ small perturbations $G'$ of $G$ (see Definition~\ref{def:perturbations}) with the property that there exists an $s$-perimeter minimizer $E'$ in $B_1$ with $E' = G'$ in $\R^n\setminus B_1$ such that 
\[
\dim_{\mathcal H}\big({\rm Sing}(\partial E')\cap B_1\big)\le n-n_s^*-2.
\]
Moreover, $E'$ is the unique minimizer.  
\end{theorem}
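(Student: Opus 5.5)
We prove Theorem~\ref{thm:main2} by a foliation argument in the spirit of Hardt--Simon, adapted to the nonlocal Plateau problem. The idea is to embed $G$ in a strictly monotone one-parameter family of exterior data and show that, for almost every parameter, the singular set drops one dimension.

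\textbf{Step 1: the monotone family.} Fix a smooth bounded open set $U\supset B_1$. For $t\in(-1,1)$ define the exterior data
\[
G^{(t)}:=\big(G\cup\{x\in U\setminus B_1:\ \operatorname{dist}(x,\partial B_1)<t\}\big)\setminus\{x\in U\setminus B_1:\ \operatorname{dist}(x,\partial B_1)<-t\}.
\]
Thus for $t>0$ we add a thin collar around $B_1$, and for $t<0$ we remove one.
\begin{itemize}
\item The family is strictly increasing in $t$ on the collar region. If needed, one may add $G$-independent annuli so that the inclusion $G^{(t_1)}\setminus B_1\subsetneq G^{(t_2)}\setminus B_1$ holds for $t_1<t_2$.
\item The Gaussian $L^1$ distance from $G^{(t)}$ to $G=G^{(0)}$ tends to $0$ as $t\to0$.
\item Finiteness of the Minkowski content of $\partial(G\cup B_1)$ ensures that each $G^{(t)}$ admits a competitor of finite $s$-perimeter. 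Concretely, $B_1\cup G^{(t)}$ has locally finite perimeter, and the interaction near $\partial B_1$ is controlled by the Minkowski bound.
\end{itemize}
By Theorem~\ref{gene-uni}, there is a countable set $N$ such that for $t\notin N$ the minimizer $E_t$ is unique.

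\textbf{Step 2: ordering and disjointness of the singular sets.} By the weak maximum principle used in Theorem~\ref{gene-uni}, for $t_1<t_2$ outside $N$ we have $E_{t_1}\subseteq E_{t_2}$. We then apply the strong comparison principle for $s$-minimal sets. Since the exterior data differ on a set of positive measure, the nonlocal Euler--Lagrange equation is strict at any interior touching point, in the viscosity sense. Hence $\partial E_{t_1}\cap\partial E_{t_2}\cap B_1=\emptyset$. The boundaries $\{\partial E_t\cap B_1\}_{t\notin N}$ are therefore pairwise disjoint and form a partial foliation, and so the singular sets $\Sigma_t:={\rm Sing}(\partial E_t)\cap B_1$ are pairwise disjoint.

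\textbf{Step 3: the measure-theoretic dimension count (main obstacle).} Let $d:=n-n_s^*-1$, the a priori bound on $\dim_{\mathcal H}\Sigma_t$ from dimension reduction. We want to show that $\mathcal H^{d-1+\beta}(\Sigma_t)=0$ for a.e.\ $t$ and every $\beta>0$.

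Following Hardt--Simon and the Chodosh--Mantoulidis--Schulze approach, the key quantitative input is a separation estimate near singular points. Take $x\in\Sigma_{t}$ and $t'$ near $t$. Blow up at $x$: by Step 2 and compactness, the rescalings of $E_{t'}$ converge to a nonnegative solution of the linearized nonlocal Jacobi operator on a singular minimal cone $C$ (not a halfspace), lying on one side of $C$. A nonlocal Harnack inequality and a growth-rate (Liouville-type) analysis for positive Jacobi fields on $C$ should give
\[
\operatorname{dist}(x,\partial E_{t'})\ge c\,|t-t'|^{\kappa}\quad\text{for some }\kappa<1+\text{(gain)},
\]
so that $\Sigma_t$ lies in a region of the parameter--space product swept at a positive "speed" with an improved exponent. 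I expect establishing this growth rate, uniformly over cones via compactness of the space of $s$-minimal cones, to be the hardest step. I would first try to derive it from monotonicity of the $s$-Caffarelli--Silvestre extension energy.

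Granting the estimate, a covering argument applies. Cover $\bigcup_{t\in J}\Sigma_t$ for a small interval $J$ using Vitali balls. By disjointness of the leaves and the separation estimate, the function $x\mapsto t(x)$ is Hölder with a good exponent on $\bigcup_t\Sigma_t$. A coarea/Eilenberg-type inequality then gives
\[
\int^{*}_J \mathcal H^{d-1+\beta}(\Sigma_t)\,dt\lesssim\mathcal H^{d+\beta'}\Big(\bigcup_t\Sigma_t\Big)=0
\]
for a suitable $\beta'>0$. Hence $\dim\Sigma_t\le d-1=n-n_s^*-2$ for a.e.\ $t$.

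\textbf{Step 4: conclusion.} Choose $t_k\to0$ with $t_k\notin N$ and with the dimension bound holding at each $t_k$. Then $G'=G^{(t_k)}$ is a small perturbation in the sense of Definition~\ref{def:perturbations}, whose unique minimizer $E'$ satisfies $\dim_{\mathcal H}({\rm Sing}(\partial E')\cap B_1)\le n-n_s^*-2$.
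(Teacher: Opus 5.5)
Your Step~3, where the proof actually has to happen, rests on two claims that are never established, and the argument needs both.

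\textbf{First gap: the separation estimate.} You only conjecture it, and the collar family gives no mechanism for proving it. The added datum $(B_{1+t'}\setminus B_{1+t})\setminus G$ can have tiny or even zero measure. So the family need not even be strictly increasing, and ``adding $G$-independent annuli'' does not repair this. More importantly, turning a small change of nonlocal curvature into a distance bound at a \emph{singular} touching point would require two things that are not available: a quantitative strong maximum principle, and a theory of positive nonlocal Jacobi fields on singular $s$-minimal cones. Note also that rescalings of $E_{t'}$ converge to sets, not to Jacobi fields. You would first have to normalize a height difference between leaves that are not graphs over each other near singular points.

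The missing idea is to build the family compatibly with the invariances of the problem. Take $A_t=\bigcup_{0\le q\le t}\big((1+q)(B_1\cup G)+B_q\big)$ and $G^{(t)}=A_t\setminus B_1$. Set $\lambda=1+\frac{t-\tau}{1+\tau}$ and $r=\lambda-1$. Then $\lambda A_\tau+B_r\subseteq A_t$. Hence $\lambda E^{(\tau)}+v$ is $s$-minimal in $B_\lambda(v)\supseteq B_1$, with exterior datum inside $G^{(t)}$. The weak maximum principle alone then gives $\lambda E^{(\tau)}+v\subseteq E^{(t)}$ for all $|v|<r$. It follows that $B_{\frac\eta2(t-\tau)}(x)\subseteq E^{(t)}$ for $x\in\partial E^{(\tau)}\cap B_{1-\eta}$ (Lemma~\ref{lem:linear-retraction}). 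This makes the singular time function locally Lipschitz, and exponent $1$ is all that is needed; no Jacobi fields are involved. In the paper, the Minkowski-content hypothesis serves only to show that this family is $L^1_{\rm loc}$-close to $G$. Finite-energy competitors exist for any datum, e.g.\ $B_1\cup G^{(t)}$.

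\textbf{Second gap: the bound on the union.} Even granting separation, you assert $\mathcal H^{d+\beta'}\big(\bigcup_t\Sigma_t\big)=0$, i.e.\ $\dim_{\mathcal H}\bigcup_t\Sigma_t\le n-n_s^*-1$, with no argument. This does not follow from disjointness of the $\Sigma_t$ together with $\dim\Sigma_t\le d$. Uncountably many disjoint $d$-dimensional slices can fill a $(d+1)$-dimensional set, and then Eilenberg's inequality with a Lipschitz time function gives no gain at all.

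This bound is the heart of the paper's argument (Proposition~\ref{prop:dimred}). Its proof runs as follows.
\begin{itemize}
\item Discard three countable sets of times: the nonuniqueness times; the $L^1_{\rm loc}$-discontinuity times, which are countable since $t\mapsto|E^{(t)}\cap B_m|$ is monotone; and the times of singular points at which the density $f(x)=\omega_s(E^{(\tau(x))},x)$ is not approached along the singular set. Their singular sets contribute only a countable union of sets of dimension $\le n-n_s^*-1$.
\item At a remaining point $x_0$, suppose the flatness hypothesis of Lemma~\ref{LemReifenberg} fails. Blow up $E^{(t_0)}$ at $x_0$, and $E^{(\tau(x_k^{(j)}))}$ at nearby singular points $x_k^{(j)}$, all at scale $r_k$. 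By density convergence and the monotonicity formula, the limits are $s$-minimal cones.
\item Nestedness of the $E^{(t)}$, the homogeneous-inclusion lemma and the strict maximum principle give $D-y^{(j)}\subseteq D$ or $D+y^{(j)}\subseteq D$, for $n-n_s^*$ linearly independent vectors $y^{(j)}$.
\item A monotone $s$-minimal cone is either a halfspace or translation invariant (Lemma~\ref{lem:monotone-cones}, via regularity of nonlocal minimal graphs). So $D$ splits off $n-n_s^*$ lines and is a halfspace, contradicting $0\in{\rm Sing}(\partial D)$.
\end{itemize}

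Only with both inputs in hand --- dimension $\le n-n_s^*-1$ for the projected singular set, and Lipschitz time function --- does the slicing lemma give $\dim_{\mathcal H}\Sigma_t\le n-n_s^*-2$ for a.e.\ $t$. Your Eilenberg argument would also work at that point.
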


 We refer the reader to Theorem~\ref{GENER:REGULA0} below for the more precise construction and result yielding such an estimate.  {F}rom a technical perspective, the perturbation is constructed by combining two simple operations simultaneously: adding a small ball to the exterior datum and performing a homothety that is close to the identity.

Throughout, ``generic'' is meant in the perturbative sense: given any admissible exterior datum $G$ and any $L_{\rm loc}^1$-neighborhood of $G$, there exists $G'$ in that neighborhood such that every minimizer with exterior datum $G'$ satisfies the stated conclusion. The proof obtains this by constructing a monotone one-parameter family $G^{(t)}\to G$ and proving the conclusion for all $t$ outside a countable/null exceptional set.

Quite interestingly, comparing the results presented here with those in \cite[Theorem~1.1]{MR4104542}, one can say that, while interior regularity for nonlocal minimal surfaces is, in this sense, generic, {\em boundary regularity is not}.
\medskip 

 The result presented here may be viewed as a nonlocal counterpart of a classical theorem of Hardt and Simon (see~\cite{MR809969}), which states that, for generic choices of boundary data, the corresponding area-minimizing hypersurface in~$\R^8$ is smooth. This is particularly striking because complete regularity of area-minimizing hypersurfaces is guaranteed only up to ambient dimension~$7$, while singularities may occur starting in ambient dimension~$8$. See also~\cite{MR1243523}, where an analogous result is established for generic ambient metrics rather than boundary data. The result of Hardt--Simon has also been recently improved first in dimensions 9 and 10, and then up to 11, in recent works \cite{MR5082540, MR4753940, 2025arXiv250612852}.

The study of regularity properties in a generic setting for elliptic equations
also dates back to~\cite{MR387810} and has extended to several related
topics, including the classical obstacle problem (see~\cite{MR4179834, MR1967031}), the thin obstacle problem (see~\cite{MR4228865, MR4649881, colombocarducci}), 
and a large class of free boundary problems including those of
Alt-Caffarelli and Alt-Phillips (see~\cite{2023arXiv230813209F, fernandez2025continuity, FY25}). In particular, the methods and proofs we present here are adapted from the Bernoulli setting in \cite{2023arXiv230813209F}, which at the same time are inspired by the breakthroughs in \cite{MR4179834}.

In this spirit, the proof of the generic regularity results relies on
the construction of a monotone perturbation that produces a family of minimizers whose singular points form a space--time set \[
\cS:=\{(x,t):x\in{\rm Sing}(\partial E^{(t)})\cap B_1\},
\]
where the ``time'' $t$ here is the parameter describing the perturbation of the original exterior datum.
 
The separation estimate for the family implies that this set is, up to the exceptional nonuniqueness and discontinuity parameters, the graph of a Lipschitz time function.  Blow-ups at accumulation points of equal density then produce monotone $s$-minimal cones.  The cone-splitting argument rules out such configurations in the critical dimension and gives the sharp bound on the projection of~$\cS$ in higher dimensions.  A slicing argument in the parameter variable then yields the one-dimensional improvement for almost every exterior datum in the family.

\subsection{Organization of the paper}

The forthcoming Sections \ref{LAn2} and \ref{sec:generic-regularity} contain
the proofs, respectively, of Theorems~\ref{gene-uni} and of Theorems~\ref{thm:main1} and \ref{thm:main2}.

\section{Generic uniqueness: proof of Theorem~\ref{gene-uni}}\label{LAn2}

The goal of this section is to prove Theorem~\ref{gene-uni}.
To accomplish this goal, we need some ancillary results
related to the maximum principle for nonlocal minimal surfaces.

\subsection{Auxiliary results of maximum principle type}

We recall a simple, but useful, ``submodularity'' observation:

\begin{lemma}\label{LE1b} We have that
\begin{equation} \label{CONDc1}\Per_s(E\cap F,\Omega)+\Per_s(E\cup F,\Omega)\le
\Per_s(E,\Omega)+\Per_s(F,\Omega),\end{equation}
with equality holding if and only if, for a.e.~$(x,y)\in \R^{2n}\setminus(\Omega^c)^2$,
\begin{equation} \label{CONDc2}(\chi_E(x)-\chi_E(y))(\chi_F(x)-\chi_F(y))\ge0.\end{equation}
\end{lemma}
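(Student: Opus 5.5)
We rewrite the $s$-perimeter as a single double integral and then compare the two sides of \eqref{CONDc1} pointwise in $(x,y)$. The key observation is that
$$\Per_s(E,\Omega)=\frac12\iint_{\R^{2n}\setminus(\Omega^c)^2}\frac{|\chi_E(x)-\chi_E(y)|}{|x-y|^{n+s}}\,dx\,dy.$$
To check this, note that $|\chi_E(x)-\chi_E(y)|$ equals $1$ exactly when one of the two points lies in $E$ and the other in $E^c$. The domain $\R^{2n}\setminus(\Omega^c)^2$ removes only the pairs with both points outside $\Omega$. The three interaction terms in the definition of $\Per_s$ therefore correspond precisely to the admissible configurations: $(E\cap\Omega,E^c\cap\Omega)$, $(E\cap\Omega,E^c\cap\Omega^c)$ and $(E\cap\Omega^c,E^c\cap\Omega)$. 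Each such configuration is counted twice by symmetry of the kernel, which explains the factor $\tfrac12$.

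Since $\chi_{E\cap F}=\min\{\chi_E,\chi_F\}$ and $\chi_{E\cup F}=\max\{\chi_E,\chi_F\}$, it suffices to prove a pointwise inequality. Write $a=\chi_E(x)$, $b=\chi_E(y)$, $c=\chi_F(x)$, $d=\chi_F(y)$, all in $\{0,1\}$. We need
$$|\min(a,c)-\min(b,d)|+|\max(a,c)-\max(b,d)|\le|a-b|+|c-d|,$$
with equality if and only if $(a-b)(c-d)\ge0$. This can be checked by cases, or more cleanly as follows.

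If $(a-b)(c-d)\ge0$, then $a-b$ and $c-d$ have the same sign (or one of them vanishes). Say $a\ge b$ and $c\ge d$. Then $\min(a,c)\ge\min(b,d)$ and $\max(a,c)\ge\max(b,d)$, so the left side equals
$$\min(a,c)+\max(a,c)-\min(b,d)-\max(b,d)=(a+c)-(b+d)=|a-b|+|c-d|.$$

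If instead $(a-b)(c-d)<0$, then for $\{0,1\}$-values $a\ne b$ and $c\ne d$ with opposite orientations. Say $a=1,b=0,c=0,d=1$. Then $\min(a,c)=\min(b,d)=0$ and $\max(a,c)=\max(b,d)=1$, so the left side is $0$ while the right side is $2$, giving strict inequality.

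Integrating the pointwise inequality against the positive kernel $|x-y|^{-n-s}$ over $\R^{2n}\setminus(\Omega^c)^2$ yields \eqref{CONDc1}. For the equality statement we must avoid $\infty-\infty$. If the right side of \eqref{CONDc1} is infinite, we interpret the equality case within the finite-perimeter setting, which is the one relevant to the paper. If it is finite, then equality of the integrals means the nonnegative integrand (right minus left) has zero integral. Since the kernel is strictly positive, the integrand vanishes a.e., and by the case analysis this is exactly \eqref{CONDc2} a.e. The converse is immediate from the equality case above.

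The only delicate point is the bookkeeping identifying $\Per_s$ with the symmetric double integral over $\R^{2n}\setminus(\Omega^c)^2$, together with the finiteness caveat in the equality case. Everything else is elementary.
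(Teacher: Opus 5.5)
Your argument is correct and is essentially the paper's proof: both establish the pointwise inequality for $\chi_{E\cap F}=\min\{\chi_E,\chi_F\}$ and $\chi_{E\cup F}=\max\{\chi_E,\chi_F\}$, with equality exactly when $(\chi_E(x)-\chi_E(y))(\chi_F(x)-\chi_F(y))\ge0$, and then integrate against the kernel over $\R^{2n}\setminus(\Omega^c)^2$. The only difference is that the paper quotes the squared version of this inequality for general real-valued functions from the literature, whereas you check the $\{0,1\}$-valued case by hand; your caveat that the equality characterization needs finite perimeters is a fair point the paper leaves implicit, since when both sides are $+\infty$ condition \eqref{CONDc2} can fail on a set of positive measure.
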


\begin{proof} Given~$u$, $v:\R^n\to\R$, we set~$m(x):=\min\{u(x),v(x)\}$ and~$M(x):=\max\{u(x),v(x)\}$ and we
have (see e.g. equations~(40) and~(41) in~\cite{MR3081641}) that, for all~$x$, $y\in\R^n$,
$$ |m(x)-m(y)|^2+|M(x)-M(y)|^2\le|u(x)-u(y)|^2+|v(x)-v(y)|^2,$$
with equality holding if and only if
$$ (u(x)-u(y))(v(x)-v(y))\ge0.$$
In particular, taking~$u:=\chi_E$ and~$v:=\chi_F$, we find that
\begin{equation} \label{CONDc3} |\chi_{E\cap F}(x)-\chi_{E\cap F}(y)|^2+|\chi_{E\cup F}(x)-\chi_{E\cup F}(y)|^2\le|\chi_E(x)-\chi_E(y)|^2+|\chi_F(x)-\chi_F(y)|^2,\end{equation}
with equality holding if and only if~\eqref{CONDc2} holds true for these~$x$ and~$y$.

Thus, integrating~\eqref{CONDc3} over~$(x,y)\in\R^{2n}\setminus(\Omega^c)^2$, one obtains the inequality in~\eqref{CONDc1}, as well as the fact that equality holds in~\eqref{CONDc1} if and only if~\eqref{CONDc2} holds true for a.e.~$(x,y)\in
\R^{2n}\setminus(\Omega^c)^2$.
\end{proof}

\begin{figure}[h]
\centering
\includegraphics[width=0.6\textwidth]{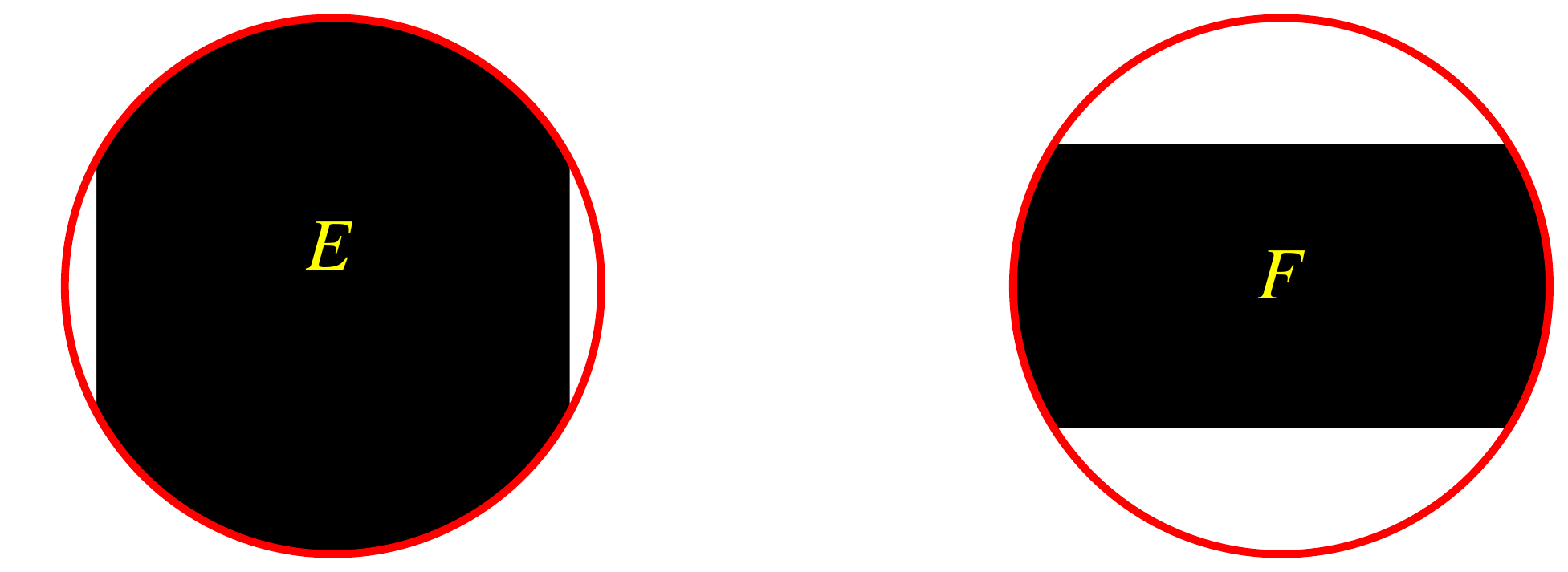}
\caption{\sl Examples of two sets~$E$ and~$F$ for which~$\Per(E\cap F,\Omega)+\Per(E\cup F,\Omega)=\Per(E,\Omega)+\Per(F,\Omega)$. Notice that
taking~$x\in E\setminus F$ and~$y\in F\setminus E$ we have that~$(\chi_E(x)-\chi_E(y))(\chi_F(x)-\chi_F(y))\le0$, in contrast with~\eqref{CONDc2},
showing that the analogue of Lemma~\ref{LE1b}
is false in the classical case.
}\label{FIGURA00}
\end{figure}

We stress that the result in Lemma~\ref{LE1b} is genuinely nonlocal, and the classical perimeter does not
satisfy properties of this type (not even for connected domains~$\Omega$, and not even in the plane, see
Figures~\ref{FIGURA00} and~\ref{FIGURA} for explicit counterexamples).

We deduce from Lemma~\ref{LE1b} the following weak maximum principle for nonlocal minimal surfaces. We recall that throughout the paper, all identities between sets (equalities, inclusions, and strict inclusions) are assumed to hold up to sets of measure zero:

\begin{corollary} \label{Cor1p}
Let~$E_0$, $ F_0\subseteq\R^n$ and suppose that~$E_0\setminus\Omega\subseteq F_0\setminus\Omega$.

Let~$E$ be $s$-minimal in~$\Omega$ with~$E\setminus\Omega=E_0\setminus\Omega$. 

Let~$F$ be $s$-minimal in~$\Omega$ with~$F\setminus\Omega=F_0\setminus\Omega$. 

Then, either
$$E\subseteq F$$
or
\begin{equation}\label{inc0}E_0\setminus\Omega= F_0\setminus\Omega\quad{\mbox{and}}\quad
F\subseteq E.\end{equation}
\end{corollary}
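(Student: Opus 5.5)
The plan is to compare $E$ and $F$ with $E\cap F$ and $E\cup F$, and then use the equality case of Lemma~\ref{LE1b}. Since $E\setminus\Omega\subseteq F\setminus\Omega$, we have $(E\cap F)\setminus\Omega=E\setminus\Omega$ and $(E\cup F)\setminus\Omega=F\setminus\Omega$. So $E\cap F$ is an admissible competitor for $E$, and $E\cup F$ is an admissible competitor for $F$. Minimality then gives
\[
\Per_s(E,\Omega)\le\Per_s(E\cap F,\Omega),\qquad \Per_s(F,\Omega)\le\Per_s(E\cup F,\Omega).
\]
Combined with \eqref{CONDc1}, all of these must be equalities; here we assume the perimeters are finite, as is implicit in minimality. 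In particular, equality holds in \eqref{CONDc1}. Lemma~\ref{LE1b} therefore yields \eqref{CONDc2} for a.e. $(x,y)\in\R^{2n}\setminus(\Omega^c)^2$.

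Next we extract the dichotomy from \eqref{CONDc2}. Suppose $E\not\subseteq F$, so that $A:=(E\setminus F)\cap\Omega$ has positive measure; outside $\Omega$ we already know $E\subseteq F$. Pick a pair $(x,y)$ with $x\in A$, so that $\chi_E(x)-\chi_F(x)=1$, and with $y$ arbitrary. Condition \eqref{CONDc2} reads $(1-\chi_E(y))(0-\chi_F(y))\ge0$. This excludes $y\in F\setminus E$, because that would give $(1)(-1)<0$.

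Since $x\in\Omega$, every pair $(x,y)$ lies in $\R^{2n}\setminus(\Omega^c)^2$. Fubini then shows that $F\setminus E$ is null in all of $\R^n$, i.e. $F\subseteq E$. Combined with $E_0\setminus\Omega\subseteq F_0\setminus\Omega$, the inclusion $F\subseteq E$ outside $\Omega$ forces $E_0\setminus\Omega=F_0\setminus\Omega$, which is \eqref{inc0}.

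The main obstacle is modest: making sure the a.e. statement in Lemma~\ref{LE1b} transfers correctly. This is a Fubini argument on the positive-measure set $A\times(F\setminus E)$, which is contained in the region where \eqref{CONDc2} holds. A second point to check is the finiteness of the perimeters, so that the subtraction giving equality in \eqref{CONDc1} is legitimate. This is standard, since the minimizers are assumed to have finite $s$-perimeter.
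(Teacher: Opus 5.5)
Your proposal is correct and follows essentially the same route as the paper. You test minimality against $E\cap F$ and $E\cup F$ to force equality in \eqref{CONDc1}, then read off from \eqref{CONDc2} at points $x\in(E\setminus F)\cap\Omega$ that $F\setminus E$ is null. Your Fubini argument on $A\times(F\setminus E)$ and your remark that the perimeters must be finite simply make explicit what the paper leaves implicit.
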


\begin{proof} By construction, $(E\cup F)\setminus\Omega=
(E_0\cup F_0)\setminus\Omega=F_0\setminus\Omega$ and therefore, by the minimality of~$F$, we have that~$\Per_s(F,\Omega)\le\Per_s(E\cup F,\Omega)$.

Similarly, $(E\cap F)\setminus\Omega=
(E_0\cap F_0)\setminus\Omega=E_0\setminus\Omega$ and therefore, by the minimality of~$E$, we have that~$\Per_s(E,\Omega)\le\Per_s(E\cap F,\Omega)$.

These observations and~\eqref{CONDc1} yield that~\eqref{CONDc2} holds true for a.e.~$(x,y)\in \R^{2n}\setminus(\Omega^c)^2$.

Suppose now that~$E\not\subseteq F$ and pick~$x\in\R^n$ such that~$x\in E\setminus F$ (up to removing a set of measure zero). {F}rom our assumptions on~$E_0$ and~$F_0$ it follows that~$x\in\Omega$.
Note that~$\chi_E(x)=1$ and~$\chi_F(x)=0$, which, combined with~\eqref{CONDc2} yields that, for a.e.~$y\in\R^n$,
$$ 0\ge-(\chi_E(x)-\chi_E(y))(\chi_F(x)-\chi_F(y))=
(1-\chi_E(y))\chi_F(y)=\chi_{E^c}(y)\chi_{F}(y),$$
that is
$$ \chi_{E^c}(y)\chi_{F}(y)=0.$$
Consequently, for a.e.~$y\in F$, we have that~$\chi_{E^c}(y)=0$,
giving that~$F\subseteq E$ (in the measure-theoretic sense), which leads us to~\eqref{inc0}.
\end{proof}

It is interesting to observe that, as a byproduct of Corollary~\ref{Cor1p},
minimizers of the fractional perimeter preserve the symmetries of the domain and boundary data, for example, for a rotation invariant domain with a rotation invariant external datum, the fractional perimeter minimizers are necessarily rotation invariant (and same with respect to translations, reflections, etc.): see~\cite[Lemma A.1]{MR3596708} for a general result in this sense. Note that this symmetry property is specific for nonlocal minimizers, since minimizers of the classical perimeter do not inherit the symmetries of the boundary and the boundary data, see Figure~\ref{FIGURA} for a counterexample
(in particular, this shows that not every minimizer of the classical perimeter can be obtained as a limit
of nonlocal minimizers).

\begin{figure}[h]
\centering
\includegraphics[width=0.6\textwidth]{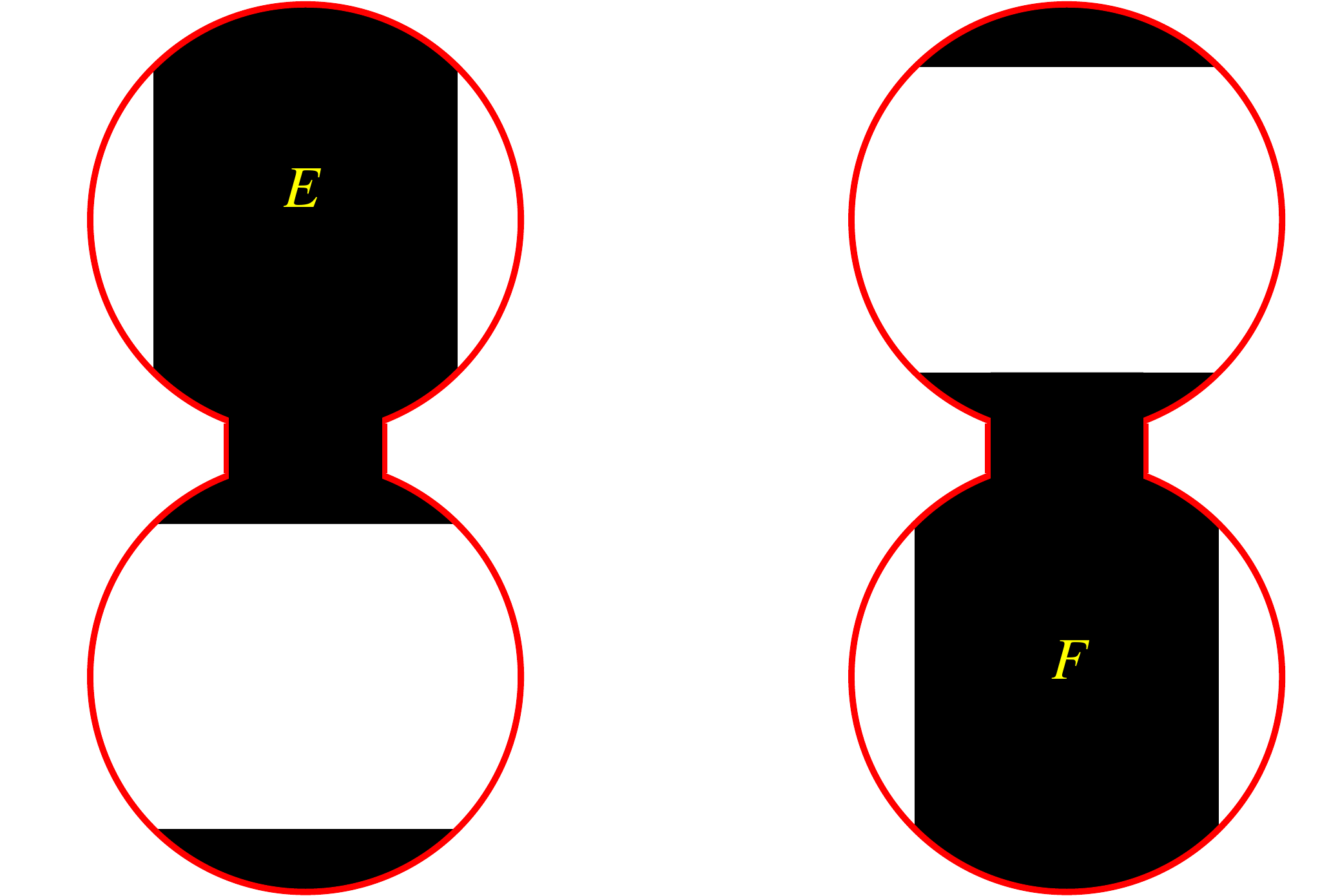}
\caption{\sl Examples of two minimizers~$E$ and~$F$ of the classical perimeter in a planar, connected domain.
Notice that neither of them is contained in the other, showing that the analogue of Corollary~\ref{Cor1p}
is false in the classical case. \\
Notice also that in this case~$\Per(E\cap F,\Omega)+\Per(E\cup F,\Omega)=\Per(E,\Omega)+\Per(F,\Omega)$,
but taking~$x\in E\setminus F$ and~$y\in F\setminus E$ we have that~$(\chi_E(x)-\chi_E(y))(\chi_F(x)-\chi_F(y))\le0$, in contrast with~\eqref{CONDc2},
showing that the analogue of Lemma~\ref{LE1b}
is false in the classical case.
}\label{FIGURA}
\end{figure}

{F}rom the weak maximum principle in Corollary~\ref{Cor1p}, we actually obtain an ``inclusion principle'', 
which is also conceptually interesting since it highlights that generic uniqueness
is implied by the property that one set is contained in the other which allows for weaker assumptions on the boundary data compared e.g. with the counterpart in the free boundary case~\cite{2023arXiv230813209F}. In our setting, the details on this topic go
as follows:

\begin{proposition}\label{ma:pro} Let~$I\subseteq\R$ and, for all~$t\in I$, consider a family of sets~$G^{(t)}\subseteq\R^n$
such that, for all~$t_1$, $t_2\in I$ with~$t_1<t_2$, it holds that~$ G^{(t_1)}\setminus\Omega\subsetneq
G^{(t_2)}\setminus\Omega$.

Assume that~$t_\star\in I$ is such that there exist at least two $s$-minimal sets in~$\Omega$
coinciding with~$ G^{(t_\star)}\setminus\Omega$ outside~$\Omega$.

Then, there exists a set~$S^{(t_\star)}\subseteq\Omega$ of positive Lebesgue measure such that
if~$I\ni t>t_\star$ and~$G^{(t)}_\diamond$ is an~$s$-minimal set in~$\Omega$ with~$G^{(t)}_\diamond\setminus\Omega=
G^{(t)}\setminus\Omega$, then
\begin{equation}\label{maxp1} S^{(t_\star)}\subseteq G^{(t)}_\diamond.\end{equation}

Furthermore, if~$I\ni \tau<t_\star$ and~$G^{(\tau)}_\diamond$ is an~$s$-minimal set in~$\Omega$ with~$G^{(\tau)}_\diamond\setminus\Omega=
G^{(\tau)}\setminus\Omega$, then
\begin{equation}\label{maxp2} S^{(t_\star)}\cap G^{(\tau)}_\diamond=\varnothing.\end{equation}

Finally, there exists an~$s$-minimal set in~$\Omega$
coinciding with~$ G^{(t_\star)}\setminus\Omega$ outside~$\Omega$
which contains~$S^{(t_\star)}$.
\end{proposition}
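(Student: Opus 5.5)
Let $E_1\neq E_2$ be two distinct $s$-minimal sets in~$\Omega$ with exterior datum $G^{(t_\star)}\setminus\Omega$. The natural candidate is
$$S^{(t_\star)}:=(E_1\cup E_2)\setminus(E_1\cap E_2)=E_1\,\triangle\, E_2 .$$
This set lies in~$\Omega$, because $E_1$ and $E_2$ agree outside~$\Omega$. It has positive measure, since $E_1\neq E_2$ up to null sets. The plan is to compare every minimizer for other parameters with both $E_1$ and $E_2$ using Corollary~\ref{Cor1p}.

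For the first claim, take $t>t_\star$ and any minimizer $G^{(t)}_\diamond$. The data satisfy $G^{(t_\star)}\setminus\Omega\subsetneq G^{(t)}\setminus\Omega$ with strict inclusion. We apply Corollary~\ref{Cor1p} with $E=E_i$ and $F=G^{(t)}_\diamond$. The second alternative~\eqref{inc0} would require equality of the exterior data, which is excluded. Hence $E_i\subseteq G^{(t)}_\diamond$ for $i=1,2$, so $E_1\cup E_2\subseteq G^{(t)}_\diamond$, and in particular $S^{(t_\star)}\subseteq G^{(t)}_\diamond$. This gives~\eqref{maxp1}.

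For the second claim, take $\tau<t_\star$. Corollary~\ref{Cor1p} with $E=G^{(\tau)}_\diamond$ and $F=E_i$, together with the strict inclusion of the data, gives $G^{(\tau)}_\diamond\subseteq E_1\cap E_2$. This set is disjoint from $S^{(t_\star)}$, which proves~\eqref{maxp2}.

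For the final claim, I would show that $E_1\cup E_2$ is itself $s$-minimal with the same datum. By minimality of $E_1$ and $E_2$ (the union and intersection have the same exterior datum) we have $\Per_s(E_1,\Omega)\le \Per_s(E_1\cap E_2,\Omega)$ and $\Per_s(E_2,\Omega)\le\Per_s(E_1\cup E_2,\Omega)$. Combining these with~\eqref{CONDc1} forces equality in all of them. In particular $\Per_s(E_1\cup E_2,\Omega)=\Per_s(E_2,\Omega)$, which is the minimal value, so $E_1\cup E_2$ is a minimizer. It contains $S^{(t_\star)}$.

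All steps are short; the only subtle point is ruling out the alternative~\eqref{inc0} in Corollary~\ref{Cor1p}, which is where the strict monotonicity of the family is used. In the third claim one also needs finiteness of the perimeters so that the equality manipulation is legitimate. This holds because minimizers have finite $s$-perimeter whenever an admissible competitor does.
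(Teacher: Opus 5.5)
Your proof is correct and is essentially the paper's argument: both compare every minimizer at the other parameters with the two distinct minimizers at $t_\star$ via Corollary~\ref{Cor1p}, and both use the strict inclusion of the exterior data to exclude the alternative~\eqref{inc0}. The only difference is that the paper first applies Corollary~\ref{Cor1p} with equal data to order the two minimizers as $\hat F\subseteq\hat E$, so that $S^{(t_\star)}=\hat E\setminus\hat F$ and the last claim is immediate; you instead take $E_1\,\triangle\,E_2$ and show by submodularity that $E_1\cup E_2$ is a minimizer, which, given that ordering, is the same set.
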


\begin{proof} By assumption, we can find two different $s$-minimal sets in~$\Omega$, say~$\hat E$ and~$\hat F$,
such that~$ \hat E\setminus\Omega=\hat  F\setminus\Omega= G^{(t_\star)}\setminus\Omega$.
By Corollary~\ref{Cor1p}
(applied here with~$E_0:=F_0:=G^{(t_\star)}$, $E:=\hat E$, and~$F:=\hat F$),
we know that either~$\hat E\subseteq\hat F$ or~$\hat F\subseteq\hat E$. Up to swapping~$\hat E$ and~$\hat F$, we can assume that~$\hat F\subseteq \hat E$. Hence, since sets here are identified up to sets of null measure, we have that~$S^{(t_\star)}:=\hat E\setminus\hat F$ has positive measure.


We stress that
$$ S^{(t_\star)}\setminus\Omega=\hat E\cap\hat F^c\cap\Omega^c=(\hat E\cap \Omega^c)\cap( \hat F^c\cap\Omega^c)=
(G^{(t_\star)}\cap\Omega^c)\cap(\Omega^c\setminus G^{(t_\star)})=\varnothing$$
and therefore~$S^{(t_\star)}\subseteq\Omega$.

Let now~$t\in I$ with~$t>t_\star$ and pick an $s$-minimal set~$G^{(t)}_\diamond$ in~$\Omega$ with~$G^{(t)}_\diamond\setminus\Omega=
G^{(t)}\setminus\Omega$. It follows from Corollary~\ref{Cor1p} (applied here with~$E_0:=G^{(t_\star)}$,
$F_0:=G^{(t)}$, $E:=\hat E$, and~$F$ now equal to~$G^{(t)}_\diamond$)
that either
\begin{equation}\label{inc0a}\hat E\subseteq G^{(t)}_\diamond\end{equation}
or
\begin{equation}\label{inc0b}G^{(t_\star)}\setminus\Omega=G^{(t)}\setminus\Omega\quad{\mbox{and}}\quad
G^{(t)}_\diamond\subseteq \hat E.\end{equation}
But the first set equality in~\eqref{inc0b} cannot hold, and accordingly~\eqref{inc0a} must be satisfied.

As a byproduct of~\eqref{inc0a} we obtain that
$$ S^{(t_\star)}=\hat E\setminus \hat F\subseteq G^{(t)}_\diamond\setminus \hat F\subseteq G^{(t)}_\diamond,$$
establishing~\eqref{maxp1}.

Now we pick~$\tau\in I$ with~$ \tau<t_\star$ and we consider any~$s$-minimal set~$G^{(\tau)}_\diamond$
in~$\Omega$ with~$G^{(\tau)}_\diamond\setminus\Omega=
G^{(\tau)}\setminus\Omega$. We employ once more Corollary~\ref{Cor1p}
(here with~$E_0:=G^{(\tau)}$, $F_0:=G^{(t_\star)}$, $E:=G^{(\tau)}_\diamond$, and~$F:=\hat F$) and we gather that either
\begin{equation}\label{inc0a1}G^{(\tau)}_\diamond\subseteq \hat F\end{equation}
or
\begin{equation}\label{inc0b1}G^{(\tau)}\setminus\Omega=G^{(t_\star)}\setminus\Omega\quad{\mbox{and}}\quad
\hat F\subseteq G^{(\tau)}_\diamond.\end{equation}
But the first set equality in~\eqref{inc0b1} cannot hold, and accordingly~\eqref{inc0a1} must be satisfied, which in turn gives that~$\hat F^c\subseteq (G^{(\tau)}_\diamond)^c$.

Consequently, 
$$ S^{(t_\star)}\cap G^{(\tau)}_\diamond=\hat E\cap\hat F^c\cap G^{(\tau)}_\diamond\subseteq
\hat F^c\cap G^{(\tau)}_\diamond\subseteq (G^{(\tau)}_\diamond)^c\cap G^{(\tau)}_\diamond=\varnothing.$$
The claim in~\eqref{maxp2} is thereby proven.

The last claim in the statement of Proposition~\ref{ma:pro} is obvious, since~$
S^{(t_\star)}=\hat E\setminus\hat F\subseteq\hat E$.
\end{proof}

We observe that Proposition~\ref{ma:pro} is genuinely nonlocal, since its counterpart is false in the case
of classical minimal surfaces, as highlighted in Figure~\ref{FIGURA}.

\subsection{Completion of the proof of Theorem~\ref{gene-uni}}

We denote by~${\mathcal{I}}$ the set of the parameters~$t\in I$ for which there are at least two $s$-minimal sets
in~$\Omega$ coinciding with~$G^{(t)}\setminus\Omega$ outside~$\Omega$ and we aim at showing that~${\mathcal{I}}$ is at most countable.

To this end, for each~$t_\star\in{\mathcal{I}}$, we consider~$S^{(t_\star)}$ as in Proposition~\ref{ma:pro}.

We stress that if~$t_\star$, $t_\sharp\in{\mathcal{I}}$ and~$t_\star\ne t_\sharp$, then
\begin{equation}\label{dive}
S^{(t_\star)}\cap S^{(t_\sharp)}=\varnothing.
\end{equation}
To check this, suppose, up to exchanging them, that~$t_\star>t_\sharp$. Then, by virtue of~\eqref{maxp2},
given any $s$-minimizer~$G^{(t_\sharp)}_\diamond$ in~$\Omega$ such that~$G^{(t_\sharp)}_\diamond\setminus \Omega=G^{(t_\sharp)}\setminus \Omega$, we have that
$$S^{(t_\star)}\cap G^{(t_\sharp)}_\diamond=\varnothing.$$
Also, by the last claim in the statement of Proposition~\ref{ma:pro}, we may pick~$G^{(t_\sharp)}_\diamond$ such that~$G^{(t_\sharp)}_\diamond\supseteq S^{(t_\sharp)}$.
All in all, we have that~$S^{(t_\star)}\cap S^{(t_\sharp)}\subseteq S^{(t_\star)}\cap G^{(t_\sharp)}_\diamond=\varnothing$ and the proof of~\eqref{dive} is complete.

Now, for all~$k\in\N$, we call~${\mathcal{I}}_k$ the subset of~${\mathcal{I}}$ collecting all the parameters~$t_\star$ for which the Lebesgue measure of~$S^{(t_\star)}$ is at least~$\frac1{k+1}$.
By construction,
\begin{equation*}
{\mathcal{I}}=\bigcup_{k\in\N} {\mathcal{I}}_k,
\end{equation*}
hence the desired result is proven if we check that, for each~$k\in\N$, the set~${\mathcal{I}}_k$ is at most countable.

In fact, we see that~${\mathcal{I}}_k$ is necessarily finite, because if~$c_k\in\N\cup\{+\infty\}$ denotes its cardinality and~$|\cdot|$ is the Lebesgue measure, we deduce from~\eqref{dive} that
$$ |\Omega|\ge
\left|\bigcup_{t_\star\in {\mathcal{I}}_k} S^{(t_\star)}\right|=\sum_{t_\star\in {\mathcal{I}}_k} |S^{(t_\star)}|\ge
\sum_{t_\star\in {\mathcal{I}}_k}\frac1{k+1}=\frac{c_k}{k+1},
$$
namely~$c_k\le(k+1)|\Omega|<+\infty$.\hfill$\Box$

\subsection{More general kernels}
\label{ssec:more_general_kernels}

We conclude this section by noticing that the method presented to prove generic uniqueness works for a wide variety of kernels. In particular, the proof does not use the precise form of the
fractional kernel.  It only uses compactness of minimizers, the submodularity observation, and a strict comparison principle coming from the
positivity of the interaction between~$\Omega$ and~$\Omega^c$.

Let~$\Omega\subset\R^n$ be bounded, and let
$K:\R^n\times\R^n\to[0,+\infty]$ be a measurable symmetric kernel. We define the relative~$K$-perimeter in~$\Omega$ by
\begin{align*}
\Per_K(E,\Omega)& :=
\left( \int_{E\cap\Omega}\int_{E^c\cap \Omega} + \int_{E\cap \Omega}\int_{E^c\cap \Omega^c} + \int_{E\cap \Omega^c}\int_{E^c\cap \Omega}\right) K(x, y)\,  dy\,  dx 
\\ & =\frac12
\iint_{\R^{2n}\setminus(\Omega^c)^2}
|\chi_E(x)-\chi_E(y)|^2K(x,y)\,dx\,dy.
\end{align*}
Given~$h\in L^1(\Omega)$, we then consider the energy
\begin{equation}\label{eq:general-kernel-energy}
\mathcal F_{K,h}(E,\Omega):=\Per_K(E,\Omega)
+\int_\Omega h(x)\chi_E(x)\,dx .
\end{equation}

\begin{definition}[Compactness property]
\label{def:general-kernel-compactness}
We say that~$K$ has the compactness property in~$\Omega$ if the following
holds.  Let~$G\subset\Omega^c$ be an exterior datum, and let~$E_j$ be a
sequence of sets such that
\[
E_j\setminus\Omega=G
\qquad\text{and}\qquad
\sup_{j\in\N}\, \Per_K(E_j,\Omega)<+\infty .
\]
Then, up to a subsequence, there exists a set~$E$ with
$E\setminus\Omega=G$ such that
\[
\chi_{E_j}\to\chi_E\qquad\text{in }L^1(\Omega),
\]
and
\[
\Per_K(E,\Omega)\le \liminf_{j\to+\infty}\Per_K(E_j,\Omega).
\]
\end{definition}

We shall use the following lattice identity, for sets $E$ and $F$, whenever the quantities below are finite: 
\begin{equation}\label{eq:general-kernel-lattice-identity}
\Per_K(E,\Omega)+\Per_K(F,\Omega)
-\Per_K(E\cap F,\Omega)-\Per_K(E\cup F,\Omega)
=2\iint_{((E\setminus F)\times(F\setminus E))
\cap(\R^{2n}\setminus(\Omega^c)^2)}\hspace{-1cm}K(x,y)\,dx\,dy.
\end{equation}
In particular,
\begin{equation}\label{eq:general-kernel-submodularity}
\Per_K(E\cap F,\Omega)+\Per_K(E\cup F,\Omega)
\le \Per_K(E,\Omega)+\Per_K(F,\Omega).
\end{equation}
The lower-order term is exactly modular as well:
\begin{equation}\label{eq:general-kernel-modular-term}
\int_\Omega h\chi_E+
\int_\Omega h\chi_F
=
\int_\Omega h\chi_{E\cap F}
+
\int_\Omega h\chi_{E\cup F}.
\end{equation}

\begin{proposition} 
\label{prop:general-kernel-extremal-minimizers}
Assume that~$K$ has the compactness property in~$\Omega$, and let
$h\in L^1(\Omega)$.  Let~$G\subset\Omega^c$ be an exterior datum for which
there exists an admissible set~$E$ with
\[
E\setminus\Omega=G
\qquad\text{and}\qquad
\Per_K(E,\Omega)<+\infty .
\]
Then, the minimization problem
\[
\inf\big\{\mathcal F_{K,h}(E,\Omega):E\setminus\Omega=G\big\}
\]
admits a minimizer.  

Moreover, if~$\mathcal M_G$ denotes the family of all
minimizers, then there exist $E_G^-, E_G^+\in \mathcal{M}_G$ such that
\[
E_G^-\subseteq E\subseteq E_G^+
\qquad\text{in }\Omega, \quad \text{for every}\ E\in\mathcal M_G.
\]
\end{proposition}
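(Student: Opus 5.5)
Existence follows from the direct method. Let $m:=\inf\{\mathcal F_{K,h}(E,\Omega):E\setminus\Omega=G\}$.

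\emph{Finiteness of $m$.} Since $\Per_K\ge0$ and $h\in L^1(\Omega)$, we have $\mathcal F_{K,h}\ge -\|h\|_{L^1(\Omega)}$. The admissible set of finite $K$-perimeter gives $m<+\infty$, so $m$ is finite.

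\emph{Extraction of a minimizer.} Take a minimizing sequence $E_j$. Then $\Per_K(E_j,\Omega)\le \mathcal F_{K,h}(E_j,\Omega)+\|h\|_{L^1(\Omega)}$ is uniformly bounded. The compactness property (Definition~\ref{def:general-kernel-compactness}) gives a subsequence and a set $E$ with $E\setminus\Omega=G$, $\chi_{E_j}\to\chi_E$ in $L^1(\Omega)$, and lower semicontinuity of $\Per_K$. Up to a further subsequence, $\chi_{E_j}\to\chi_E$ a.e. in $\Omega$, so dominated convergence (domination by $|h|$) gives $\int_\Omega h\chi_{E_j}\to\int_\Omega h\chi_E$. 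Hence $\mathcal F_{K,h}(E,\Omega)\le m$, and $E$ is a minimizer.

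\emph{Lattice property of $\mathcal M_G$.} If $E,F\in\mathcal M_G$, then $E\cap F$ and $E\cup F$ are admissible, since both coincide with $G$ outside $\Omega$. Adding \eqref{eq:general-kernel-submodularity} and \eqref{eq:general-kernel-modular-term} gives
\[
\mathcal F_{K,h}(E\cap F)+\mathcal F_{K,h}(E\cup F)\le \mathcal F_{K,h}(E)+\mathcal F_{K,h}(F)=2m .
\]
All quantities are finite: $\Per_K(E)$ and $\Per_K(F)$ are finite because $\mathcal F_{K,h}=m$ and $h\in L^1$, and then the intersection and union have finite perimeter by \eqref{eq:general-kernel-submodularity}. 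Each term on the left is $\ge m$, so both equal $m$, i.e. $E\cap F,\ E\cup F\in\mathcal M_G$.

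\emph{Construction of $E_G^+$.} The main obstacle is passing from finite to arbitrary families of minimizers, which may be uncountable; measure-theoretic unions are not directly available. I would use an essential supremum via a maximization argument. Let
\[
\sigma:=\sup\{|E\cap\Omega|:E\in\mathcal M_G\}\le|\Omega|<\infty .
\]
\begin{enumerate}
\item Choose $F_k\in\mathcal M_G$ with $|F_k\cap\Omega|\to\sigma$.
\item Replace them by $\tilde F_k:=F_1\cup\dots\cup F_k$. These lie in $\mathcal M_G$ by the lattice step, increase in $k$, and satisfy $|\tilde F_k\cap\Omega|\to\sigma$.
\item Set $E_G^+:=\bigcup_k\tilde F_k$. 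Then $\chi_{\tilde F_k}\to\chi_{E_G^+}$ in $L^1(\Omega)$ by monotone convergence, and $E_G^+\setminus\Omega=G$.
\item $E_G^+\in\mathcal M_G$: the perimeters $\Per_K(\tilde F_k,\Omega)$ are bounded by $m+\|h\|_{L^1(\Omega)}$. Lower semicontinuity follows from Fatou's lemma applied to the integrand $|\chi_E(x)-\chi_E(y)|^2K(x,y)$ along the a.e.-convergent sequence, or alternatively from the compactness property with uniqueness of the $L^1$ limit. The $h$-term converges by dominated convergence. Hence $\mathcal F_{K,h}(E_G^+)\le m$.
\item $E_G^+$ contains every minimizer: for any $E\in\mathcal M_G$, the set $E\cup E_G^+$ lies in $\mathcal M_G$ and $|(E\cup E_G^+)\cap\Omega|\le\sigma=|E_G^+\cap\Omega|$. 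Therefore $|(E\setminus E_G^+)\cap\Omega|=0$, that is, $E\subseteq E_G^+$ in $\Omega$.
\end{enumerate}

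\emph{Construction of $E_G^-$.} Symmetrically, minimize $|E\cap\Omega|$ over $\mathcal M_G$ and take decreasing intersections $F_1\cap\dots\cap F_k$. The limit is a minimizer by the same Fatou and dominated convergence argument, and it is contained in every $E\in\mathcal M_G$ by the same measure comparison, now applied to $E\cap E_G^-$.
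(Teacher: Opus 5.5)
Your proof is correct and follows essentially the same route as the paper: the direct method via the compactness property plus dominated convergence for the $h$-term, closure of $\mathcal M_G$ under $\cap$ and $\cup$ from submodularity and modularity, and then unions (resp.\ intersections) of a sequence maximizing (resp.\ minimizing) $|E\cap\Omega|$, followed by a measure comparison with $E\cup E_G^+$ (resp.\ $E\cap E_G^-$). Your step 4 also supplies a point the paper leaves implicit: it checks, via Fatou's lemma and dominated convergence, that the countable increasing union (resp.\ decreasing intersection) is itself a minimizer.
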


\begin{proof}
Let~$-\|h\|_{L^1(\Omega)}\le m_G < +\infty$ be the infimum, and let~$(E_j)_{j\in \N}$ be a minimizing sequence, which by the compactness property (Definition~\ref{def:general-kernel-compactness}) satisfies
\[
\chi_{E_j}\to\chi_E\qquad\text{in }L^1(\Omega)
\]
up to a subsequence, for some~$E$ with~$E\setminus\Omega=G$, and
$$
\Per_K(E,\Omega)\le\liminf_{j\to+\infty}\Per_K(E_j,\Omega).
$$
The convergence is a.e. in~$\Omega$ (up to a subsequence) and dominated convergence therefore implies that $E$ is, in fact, a minimizer. 

By~\eqref{eq:general-kernel-submodularity} and~\eqref{eq:general-kernel-modular-term},  the class of minimizers is closed under unions and
intersections.  In particular, taking unions of a maximizing sequence in $\mathcal M_G$ for $|E\cap \Omega|$ yields the existence of some $E_G^+$ as above, with 
\[
|E_G^+\cap \Omega| =  \sup\{|E \cap \Omega| : E\in \mathcal M_G\} \ge |(E_G^+\cup E) \cap \Omega|\quad\text{for any}\  E \in \mathcal M_G. 
\]
This implies $E \subseteq E_G^+$ in $\Omega$, as we wanted. The construction for $E^-_G$ is analogous with intersections and the infimum.
\end{proof}

We can now prove the generic uniqueness statement for general perimeters:

\begin{proposition}[Generic uniqueness for general kernels]
\label{prop:general-kernel-generic-uniqueness}
Assume that~$K$ has the compactness property in~$\Omega$ (Definition~\ref{def:general-kernel-compactness}), and that
\begin{equation}\label{eq:interior-exterior-positivity}
K(x,y)>0
\qquad\text{for a.e.}\ (x,y)\in\Omega\times\Omega^c.
\end{equation}
Let~$h\in L^1(\Omega)$.  Let~$I\subset\R$, and let
$\{G^{(t)}\}_{t\in I}$ be a family of exterior data in~$\Omega^c$.
Assume that, for each~$t\in I$, there is an admissible set of finite
$K$-perimeter with exterior datum~$G^{(t)}$.  

Assume also that, whenever
$\tau,t\in I$ and~$\tau<t$,
\begin{equation}\label{eq:strict-exterior-data-general-kernel}
G^{(\tau)}\subsetneq G^{(t)}
\qquad\text{in }\Omega^c.
\end{equation}
Then the set of parameters~$t\in I$ for which the minimizer of
\eqref{eq:general-kernel-energy} with exterior datum~$G^{(t)}$ is not unique
is at most countable.
\end{proposition}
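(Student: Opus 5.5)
The plan is to reproduce the argument of Theorem~\ref{gene-uni}, with the extremal minimizers of Proposition~\ref{prop:general-kernel-extremal-minimizers} used as a clean substitute for the pair $\hat E,\hat F$. For each $t\in I$, let $E^-_t:=E^-_{G^{(t)}}$ and $E^+_t:=E^+_{G^{(t)}}$. Nonuniqueness at $t$ is then equivalent to $|(E^+_t\setminus E^-_t)\cap\Omega|>0$, since every minimizer lies between these two.

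\textbf{Step 1 (strict comparison).} We show that if $\tau<t$, $E$ minimizes with datum $G^{(\tau)}$ and $F$ minimizes with datum $G^{(t)}$, then $E\subseteq F$. The comparison sets $E\cup F$ and $E\cap F$ have data $G^{(t)}$ and $G^{(\tau)}$ respectively, because $G^{(\tau)}\subseteq G^{(t)}$. Minimality then gives
\[
\mathcal F_{K,h}(F)\le\mathcal F_{K,h}(E\cup F),\qquad \mathcal F_{K,h}(E)\le\mathcal F_{K,h}(E\cap F).
\]
Finiteness of all quantities follows from the submodularity inequality \eqref{eq:general-kernel-submodularity} together with the modularity identity \eqref{eq:general-kernel-modular-term}. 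Adding the two inequalities and using \eqref{eq:general-kernel-lattice-identity} and \eqref{eq:general-kernel-modular-term}, we obtain
\[
\iint_{((E\setminus F)\times(F\setminus E))\cap(\R^{2n}\setminus(\Omega^c)^2)}K\,dx\,dy=0.
\]
Now suppose $|E\setminus F|>0$. Since the exterior parts satisfy $E\setminus\Omega\subseteq F\setminus\Omega$, the set $E\setminus F$ lies in $\Omega$. The set $F\setminus E$ contains $(G^{(t)}\setminus G^{(\tau)})\cap\Omega^c$, which has positive measure by \eqref{eq:strict-exterior-data-general-kernel}. The product $(E\setminus F)\times\bigl((G^{(t)}\setminus G^{(\tau)})\cap\Omega^c\bigr)$ is therefore a positive-measure subset of $\Omega\times\Omega^c$. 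By \eqref{eq:interior-exterior-positivity}, $K>0$ a.e. there, so the integral above is positive, a contradiction. Hence $E\subseteq F$.

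This is the only step where the positivity hypothesis enters. It replaces the dichotomy of Corollary~\ref{Cor1p}, which relied on the specific product structure of the fractional kernel. I expect the main care to be needed here, in checking that the lattice identity applies with the $h$-term and that every set involved has finite energy.

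\textbf{Step 2 (disjointness and counting).} For $t\in\mathcal I$, the set of nonuniqueness parameters, define $S^{(t)}:=(E^+_t\setminus E^-_t)\cap\Omega$, which has positive measure. For $\tau<t$ in $\mathcal I$, Step 1 applied to $E^+_\tau$ and $E^-_t$ gives $E^+_\tau\subseteq E^-_t$. Consequently,
\[
S^{(\tau)}\subseteq E^+_\tau\subseteq E^-_t\subseteq (S^{(t)})^c,
\]
so the sets $S^{(t)}$, $t\in\mathcal I$, are pairwise disjoint subsets of the bounded set $\Omega$. For $k\in\N$, let $\mathcal I_k:=\{t\in\mathcal I:|S^{(t)}|\ge \frac1{k+1}\}$. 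Disjointness forces $\#\mathcal I_k\le (k+1)|\Omega|$. Since $\mathcal I=\bigcup_k\mathcal I_k$, the set $\mathcal I$ is at most countable, exactly as in the completion of the proof of Theorem~\ref{gene-uni}.
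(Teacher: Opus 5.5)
Your proposal is correct and follows essentially the same route as the paper. The steps match: the extremal minimizers $E_t^\pm$ from Proposition~\ref{prop:general-kernel-extremal-minimizers}, then the ordering $E_\tau^+\subseteq E_t^-$ for $\tau<t$ (from the vanishing lattice defect together with $K>0$ on $\Omega\times\Omega^c$), then the pairwise disjointness of the gaps $(E_t^+\setminus E_t^-)\cap\Omega$ inside the finite-measure set $\Omega$. Your explicit check that every energy involved is finite (submodularity applied to the finite-energy minimizers) is a detail the paper leaves implicit.
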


\begin{proof}
By Proposition~\ref{prop:general-kernel-extremal-minimizers}, for each
$t\in I$ there are a minimal and maximal minimizer~$E_t^-$, $E_t^+$, 
associated to the exterior datum~$G^{(t)}$.

Let~$\tau<t$, let $E$ be a minimizer with exterior datum~$G^{(\tau)}$, and let~$F$ be a
minimizer with exterior datum~$G^{(t)}$: the set~$E\cap F$ is
admissible for the datum~$G^{(\tau)}$, while~$E\cup F$ is admissible for $G^{(t)}$.  Hence, minimality gives
\[
\mathcal F_{K,h}(E,\Omega)+\mathcal F_{K,h}(F,\Omega)
\le
\mathcal F_{K,h}(E\cap F,\Omega)
+
\mathcal F_{K,h}(E\cup F,\Omega).
\]
The opposite inequality follows from \eqref{eq:general-kernel-submodularity}-\eqref{eq:general-kernel-modular-term}, so that equality holds. In particular,
\begin{equation}\label{eq:vanishing-defect-general-kernel}
\iint_{((E\setminus F)\times(F\setminus E))
\cap(\R^{2n}\setminus(\Omega^c)^2)}K(x,y)\,dx\,dy =0.
\end{equation}

We claim that~$E\subseteq F$ in~$\Omega$.  Otherwise, 
\[
A:=(E\setminus F)\cap\Omega
\]
has positive measure.  

On the other hand,
\[
B:=G^{(t)}\setminus G^{(\tau)} = (F\setminus E)\cap \Omega^c
\]
has positive measure by~\eqref{eq:strict-exterior-data-general-kernel} (the strict inclusion is up to sets of measure zero). 

Since~$K>0$ a.e. on
$\Omega\times\Omega^c$, we have
\[
\int_A\int_B K(x,y)\,dy\,dx>0,
\]
which contradicts~\eqref{eq:vanishing-defect-general-kernel}, since  $A\times B\subset\R^{2n}\setminus(\Omega^c)^2$.  

In particular, we have shown that
\begin{equation}\label{eq:extremal-order-general-kernel}
E_\tau^+\subseteq E_t^-
\qquad\text{in }\Omega
\qquad\text{whenever }\tau<t.
\end{equation}
For each~$t\in I$, define the gap
\[
S_t:=(E_t^+\setminus E_t^-)\cap \Omega.
\]
If~$\tau<t$, then~\eqref{eq:extremal-order-general-kernel} gives
\[
S_\tau\subseteq E_\tau^+\cap \Omega\subseteq E_t^-\cap \Omega\qquad{\mbox{and}}
\qquad
S_t\subseteq \Omega\setminus E_t^-.
\]
Thus~$S_\tau\cap S_t=\varnothing$.  Hence the family~$\{S_t\}_{t\in I}$ is
pairwise disjoint in the finite-measure set~$\Omega$.

The minimizer at time~$t$ is not unique if and only if~$|S_t|>0$.  Since
there are at most countably many pairwise disjoint measurable subsets of
$\Omega$ with positive measure, the set of such parameters is at most
countable.
\end{proof}

The result in Proposition~\ref{prop:general-kernel-generic-uniqueness} holds, in particular, in the following cases: 

\begin{lemma}[Examples of kernels]
\label{lem:general-kernel-examples}
Let~$\Omega\subset\R^n$ be a bounded Lipschitz domain.  Let~$K$ be measurable,
symmetric, and nonnegative.  Assume that
\begin{equation}\label{eq:finite-interaction-general-kernel}
\int_\Omega\int_{\Omega^c} K(x, y)\, dy\, dx <+\infty,
\end{equation}
that
\begin{equation}\label{eq:positivity-example-general-kernel}
K(x,y)>0
\qquad\text{for a.e. }(x,y)\in\Omega\times\Omega^c,
\end{equation}
and that there exist~$s\in(0,1)$,~$r_0>0$, and~$\lambda>0$ such that
\begin{equation}\label{eq:local-coercivity-general-kernel}
K(x,y)\ge \lambda |x-y|^{-n-s}\quad\text{for a.e. }x, y\in \Omega\ \text{with}\ 0<|x-y|< r_0.
\end{equation}
Then~$K$ has the compactness property in~$\Omega$.  Moreover, every exterior
datum~$G\subset\Omega^c$ admits a finite-energy admissible competitor.
Consequently, Proposition~\ref{prop:general-kernel-generic-uniqueness}
applies to the energy~\eqref{eq:general-kernel-energy}, for every
$h\in L^1(\Omega)$ and every family of exterior data satisfying
\eqref{eq:strict-exterior-data-general-kernel}.
\end{lemma}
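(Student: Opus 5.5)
We have to prove two things: that $K$ has the compactness property of Definition~\ref{def:general-kernel-compactness}, and that every exterior datum $G\subset\Omega^c$ admits a competitor of finite $K$-perimeter. Once both are in hand, Proposition~\ref{prop:general-kernel-generic-uniqueness} applies directly, since its positivity hypothesis is exactly \eqref{eq:positivity-example-general-kernel}.

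\textbf{Finite-energy competitor.} Take $E:=G$, so that $E\cap\Omega=\varnothing$. Then only the third term of $\Per_K$ survives, and
\[
\Per_K(G,\Omega)=\int_{G}\int_{\Omega}K(x,y)\,dy\,dx\le\int_\Omega\int_{\Omega^c}K(x,y)\,dy\,dx<+\infty
\]
by symmetry of $K$ and \eqref{eq:finite-interaction-general-kernel}.

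\textbf{Compactness.} Let $E_j\setminus\Omega=G$ with $\Per_K(E_j,\Omega)\le C$.

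First we get a uniform fractional Sobolev bound. By \eqref{eq:local-coercivity-general-kernel},
\[
\iint_{\Omega\times\Omega,\ |x-y|<r_0}\frac{|\chi_{E_j}(x)-\chi_{E_j}(y)|^2}{|x-y|^{n+s}}\,dx\,dy\le \frac{2C}{\lambda}.
\]
The pairs with $|x-y|\ge r_0$ contribute at most $r_0^{-n-s}|\Omega|^2$. Hence $\chi_{E_j}$ is bounded in $W^{s/2,2}(\Omega)$; note that $|\chi(x)-\chi(y)|^2=|\chi(x)-\chi(y)|$, so this is the same as a $W^{s,1}$-type bound. Since $\Omega$ is a bounded Lipschitz domain, the compact embedding $W^{\sigma,p}(\Omega)\hookrightarrow L^p(\Omega)$ gives a subsequence with $\chi_{E_j}\to u$ in $L^1(\Omega)$ and a.e. The limit $u$ takes values in $\{0,1\}$, so $u=\chi_{E\cap\Omega}$ for some set, and we define $E:=(E\cap\Omega)\cup G$, so that $E\setminus\Omega=G$.

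Next, lower semicontinuity. After extracting a further subsequence we have a.e. convergence in $\Omega$, while on $\Omega^c$ we have $\chi_{E_j}=\chi_G=\chi_E$. Therefore $|\chi_{E_j}(x)-\chi_{E_j}(y)|^2K(x,y)\to|\chi_E(x)-\chi_E(y)|^2K(x,y)$ for a.e. $(x,y)\in\R^{2n}\setminus(\Omega^c)^2$. Fatou's lemma then gives $\Per_K(E,\Omega)\le\liminf_j\Per_K(E_j,\Omega)$. Passing to a subsequence realizing the $\liminf$ before extracting keeps the inequality valid for the original sequence, as the definition requires.

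\textbf{Main obstacle.} The only genuinely analytic step is the compact embedding, where the Lipschitz regularity of $\Omega$ is used. If an extension argument is inconvenient, I would avoid it with a Fréchet--Kolmogorov argument. For $h$ small, the fractional bound restricted to $|x-y|<r_0$ yields
\[
\int_{\Omega_{|h|}}|\chi_{E_j}(x+h)-\chi_{E_j}(x)|\,dx\lesssim |h|^{s},
\]
uniformly in $j$ (averaging over $h$ in a ball, as in the standard proof), where $\Omega_{|h|}$ is the set of points of $\Omega$ at distance greater than $|h|$ from $\partial\Omega$. The Lipschitz boundary gives $|\Omega\setminus\Omega_\delta|\lesssim\delta$, so the $L^1$ mass near the boundary is uniformly small, since $0\le\chi\le1$. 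This gives precompactness in $L^1(\Omega)$.
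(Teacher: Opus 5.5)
Your proposal is correct and follows essentially the same route as the paper: a uniform fractional Sobolev bound on $\chi_{E_j}$ from the local coercivity (the paper works in $W^{s,1}(\Omega)$, which for characteristic functions is the same as your $W^{s/2,2}$ bound), then the compact embedding on the bounded Lipschitz domain, a.e.\ convergence with Fatou's lemma, and an explicit competitor whose energy is controlled by \eqref{eq:finite-interaction-general-kernel}. The differences are cosmetic: the paper uses $\Omega\cup G$ instead of $G$ as the competitor (both have energy at most $\int_\Omega\int_{\Omega^c}K$), and your Fr\'echet--Kolmogorov variant is an optional extra that actually shows the Lipschitz assumption is not needed here, since $|\Omega\setminus\Omega_\delta|\to0$ for any bounded open set and the functions are bounded by $1$.
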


\begin{proof}
Let~$G\subset\Omega^c$, and let~$E_j$ be a sequence such that
$E_j\setminus\Omega=G$ and
\[
\sup_{j\in\N}\Per_K(E_j,\Omega)<+\infty.
\]
By the local lower bound~\eqref{eq:local-coercivity-general-kernel},
\[
\iint_{\substack{(\Omega\times\Omega)\cap  \{|x-y|<r_0\}}}
\frac{|\chi_{E_j}(x)-\chi_{E_j}(y)|}{|x-y|^{n+s}}\,dx\,dy
\le C\Per_K(E_j,\Omega).
\]
The remaining part of the~$W^{s,1}(\Omega)$ seminorm, corresponding to
$|x-y|\ge r_0$, is bounded by a constant depending only on~$\Omega$,~$r_0$,
and~$s$.  Hence~$\chi_{E_j}$ is bounded in~$W^{s,1}(\Omega)$.  Since
$\Omega$ is bounded and Lipschitz, the embedding
$W^{s,1}(\Omega)\hookrightarrow L^1(\Omega)$ is compact.  Thus, up to a
subsequence,
\[
\chi_{E_j}\to\chi_E
\qquad\text{in }L^1(\Omega)
\]
for some set~$E$ with~$E\setminus\Omega=G$.  Passing to a further subsequence,
we may assume a.e. convergence in~$\Omega$.  Fatou's lemma and the
nonnegativity of~$K$ give
\[
\Per_K(E,\Omega)\le\liminf_{j\to+\infty}\Per_K(E_j,\Omega).
\]
Thus~$K$ has the compactness property.

Assumption~\eqref{eq:finite-interaction-general-kernel}
provides a finite-energy competitor.  Indeed, if
$E=\Omega\cup G$, then
\[
\Per_K(E,\Omega)
=\int_\Omega \int_{\Omega^c\setminus G} K(x, y)\, dy\, dx 
\le \int_\Omega \int_{\Omega^c} K(x, y)\, dy\, dx<+\infty.
\]
This completes the proof.
\end{proof}

\begin{remark}{\rm
In particular, the assumptions in Lemma~\ref{lem:general-kernel-examples} are satisfied by the fractional kernels
\[
K(x,y)=\frac{a(x,y)}{|x-y|^{n+s}},
\]
and by the tempered fractional kernels
\[
K(x,y)=a(x,y)\frac{e^{-m|x-y|}}{|x-y|^{n+s}},
\qquad m\ge0,
\]
provided~$s\in(0,1)$,
\[
a(x,y)=a(y,x),
\qquad
0<\lambda\le a(x,y)\le\Lambda<+\infty
\]
for a.e.~$x,y\in\R^n$. 

General kernels for nonlocal minimal surfaces have been studied,
for example, in~\cite{MR3981295}. See also~\cite{MR3401008}
for the corresponding parabolic problem.
}\end{remark}

\section{Generic regularity: proof of
Theorems~\ref{thm:main1} and \ref{thm:main2}}\label{sec:generic-regularity}

The goal of this section is to prove the generic regularity result stated in Theorems~\ref{thm:main1} and \ref{thm:main2},
constructing a family of arbitrarily small perturbations of the exterior datum (in a suitable sense) giving rise to smooth minimizers in one dimension more than guaranteed by the general theory of regularity.

The blueprint of the argument takes inspiration from~\cite{2023arXiv230813209F}, but the specific arguments need to be adapted to the peculiar case of nonlocal minimal surfaces.

\subsection{A monotone family of exterior data}
We select a family of exterior data as an arbitrarily small perturbation of a given exterior datum $G^{(0)}$, in such a way that we can ensure a strict quantitative separation of minimizers.

To accomplish this goal, we let $G^{(0)}\subseteq B_1^c$ and set
\[
A_0:=B_1\cup G^{(0)}.
\]
For $q\in[0,1]$, we define
\begin{equation}\label{def-special-family.prre}
D_q(A_0):=(1+q)A_0+B_q,
\end{equation}
and, for $t\in[0,1]$,
\begin{equation}\label{def-special-family}
A_t:=\bigcup_{0\le q\le t}D_q(A_0),
\qquad
G^{(t)}:=A_t\setminus B_1.
\end{equation}
Then $A_t=B_1\cup G^{(t)}$, and we will assume that the set $G^{(0)}$ is such that the family $\{G^{(t)}\}_{t\in[0,1]}$ is (strictly) increasing (at least, for $t$ close to 0).  For every $t$, we let $E^{(t)}$ be an $s$-minimal set in $B_1$ with exterior datum $G^{(t)}$, which satisfies (by Corollary~\ref{Cor1p})
\begin{equation}\label{nested-maximal-branch}
E^{(\tau)}\subseteq E^{(t)}
\qquad\text{whenever }0\le\tau<t\le1.
\end{equation}
 Theorem \ref{gene-uni} implies that $E^{(t)}$ is the unique minimizer for all but countably many $t \ge 0$. 

The following result gives the strict quantitative inclusion of minimizers with this exterior family of boundary data (and it is the counterpart of~\cite[Lemma 4.3]{2023arXiv230813209F} in our setting). We recall that we deal with representatives of sets up to removing sets of measure zero, and with essential boundaries. 

\begin{lemma}\label{lem:linear-retraction}
Let $0\le\tau<t<1$ and $\eta\in(0,1)$.  Then,
\begin{equation}\label{linear-retraction}
B_{\frac\eta2(t-\tau)}(x)\subseteq E^{(t)}
\qquad\text{for every }x\in\partial E^{(\tau)}\cap B_{1-\eta}.
\end{equation}
\end{lemma}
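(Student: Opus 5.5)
We compare $E^{(t)}$ with suitable dilated and translated copies of $E^{(\tau)}$. These copies are again $s$-minimal in slightly different domains, and their exterior data sit inside $G^{(t)}$. The $s$-perimeter is invariant under translations and scales under dilations ($\Per_s(\lambda E,\lambda\Omega)=\lambda^{n-s}\Per_s(E,\Omega)$). Hence, for $\lambda\ge1$ and $|z|\le r$, the set $F:=\lambda E^{(\tau)}+z$ is $s$-minimal in $\lambda B_1+z$, with exterior datum $\lambda A_\tau+z$ outside $\lambda B_1+z$.

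Set $h:=t-\tau$ and pick $\lambda:=\frac{1+t}{1+\tau}$, so that $\lambda-1=\frac{h}{1+\tau}\ge \frac h2$. By the definition of $A_t$ we have $D_{q'}(A_0)\subseteq A_t$ for all $q'\le t$. For $q\le\tau$ we get $\lambda D_q(A_0)=\lambda(1+q)A_0+B_{\lambda q}$, and a short computation with the semigroup-like structure should show that $\lambda A_\tau+B_{h/2}\subseteq A_t$ (up to checking $\lambda(1+q)\le 1+t$ and absorbing the dilation of $A_0\ni 0$ into $(1+q')A_0$ via star-shapedness coming from $B_1\subseteq A_0$; if star-shapedness is not available, I would instead use $(1+q)A_0\subseteq(1+q')A_0$ only through $A_0\supseteq B_1$ and the ball term, adjusting constants). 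In any case the target inclusion is
\[
\lambda A_\tau + z\subseteq A_t\qquad\text{for all }|z|\le c\,h,
\]
with $c$ close to $1/2$. The main obstacle is to justify this inclusion cleanly from \eqref{def-special-family.prre}--\eqref{def-special-family}, since $A_0$ need not be convex or star-shaped. I would first try to choose $q'=\lambda(1+q)-1$ directly and track the ball radii, using $B_{\lambda q}+B_{ch}\subseteq B_{q'}$ whenever $q'\ge \lambda q+ch$.

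Next we compare in the common domain $B_1$. The two sets $F$ and $E^{(t)}$ are minimal in different domains, but $F$ is minimal in $\lambda B_1+z\supseteq B_1$ provided $|z|\le\lambda-1$. A set that is minimal in a larger domain is also minimal in $B_1$ among competitors agreeing with it outside $B_1$. So $F$ is $s$-minimal in $B_1$ with exterior datum $F\setminus B_1$, and we must check that $F\setminus B_1\subseteq E^{(t)}\setminus B_1=G^{(t)}$. In $(\lambda B_1+z)^c$ this follows from the inclusion above. In the annular region $(\lambda B_1+z)\setminus B_1$ it does not follow directly, and here I would argue as in Corollary~\ref{Cor1p} but with a one-sided exterior inclusion.

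More precisely, consider $F\cap E^{(t)}$ and $F\cup E^{(t)}$ and argue via submodularity (Lemma~\ref{LE1b}) in the domain $\lambda B_1+z$. The inclusion holds in $(\lambda B_1+z)^c$, and $E^{(t)}\supseteq B_1\cap\ldots$ holds in the annulus because $A_t\supseteq B_{1+t}$-type neighbourhoods. We need the union to be admissible for $E^{(t)}$ and the intersection admissible for $F$. The strict inclusion of the data, with positive-measure difference, then forces $F\subseteq E^{(t)}$, exactly as in Corollary~\ref{Cor1p}, using strictness rather than equality. This yields $\lambda E^{(\tau)}+z\subseteq E^{(t)}$ for all $|z|\le ch$.

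Finally we conclude. Let $x\in\partial E^{(\tau)}\cap B_{1-\eta}$; then $x$ is a point of positive density of $E^{(\tau)}$. The union $\bigcup_{|z|\le ch}(\lambda E^{(\tau)}+z)$ contains $B_{ch}(\lambda y)$ for a.e.\ $y\in E^{(\tau)}$ near $x$. Moreover $|\lambda y - y|\le(\lambda-1)|y|\le (\lambda-1)(1-\eta)$ for $y\in B_{1-\eta}$. Choosing the splitting so that the leftover radius is $ch-(\lambda-1)(1-\eta)+\ldots\ge\frac\eta2 h$ (this is where $\eta$ enters: the dilation moves points by at most $(1-\eta)(\lambda-1)$, leaving a margin of order $\eta h$) gives $B_{\frac\eta2 h}(x)\subseteq E^{(t)}$ up to null sets. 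This is \eqref{linear-retraction}.
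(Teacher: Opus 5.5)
Your plan is the paper's proof. You use the same dilation $\lambda=\frac{1+t}{1+\tau}$ and the same reparametrization $q'=\lambda(1+q)-1$. You compare $\lambda E^{(\tau)}+z$ with $E^{(t)}$ through Corollary~\ref{Cor1p} in $B_1$, and you close with the same estimate $|\lambda x-x|\le(\lambda-1)(1-\eta)$. Three points need straightening out, and two of them would fail as you wrote them.

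First, what you call the main obstacle is an exact identity. With $r:=\lambda-1$ and $q'=\lambda(1+q)-1=\lambda q+r$, one gets $\lambda D_q(A_0)+B_r=\lambda(1+q)A_0+B_{\lambda q+r}=D_{q'}(A_0)$. Moreover $q'\le t$ exactly when $q\le\tau$. Hence $\lambda A_\tau+B_r\subseteq A_t$, with no star-shapedness involved.

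Second, the annulus $(\lambda B_1+z)\setminus B_1$ is not a problem, and the submodularity argument you propose ``in the domain $\lambda B_1+z$'' is not legitimate. The set $E^{(t)}$ is only known to be minimal in $B_1$. So $F\cup E^{(t)}$ is an admissible competitor for it only if you already know $F\setminus B_1\subseteq G^{(t)}$. That inclusion is immediate: $E^{(\tau)}\subseteq B_1\cup G^{(\tau)}=A_\tau$ gives $F\subseteq\lambda A_\tau+z\subseteq A_t$, hence $F\setminus B_1\subseteq A_t\setminus B_1=G^{(t)}$. Alternatively, $A_t\supseteq D_t(A_0)\supseteq B_{1+2t}$ already contains the whole annulus. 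After that, Corollary~\ref{Cor1p} applies directly in $B_1$, exactly as in the paper.

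You should also say where the strictness of this exterior inclusion comes from; the paper is equally terse here. For $|z|<r$ one has $A_t\supseteq(\lambda A_\tau+z)+B_{r-|z|}$. If this thickening added no measure, the open set $(\lambda A_\tau+z)+B_{r-|z|}$ would be invariant under further thickening. That would force $A_\tau=\R^n$, and then $\partial E^{(\tau)}=\varnothing$, so the statement is vacuous.

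Third, write $h=t-\tau$. The translation radius $\rho$ must be the full $r=\lambda-1$, not $h/2$ or ``$c$ close to $1/2$''. The radius your argument actually yields around $x$ is $\rho-(1-\eta)(\lambda-1)$. For $\rho=h/2$ and $\tau=0$ this equals $h(\eta-\frac12)$, which is negative when $\eta<\frac12$. With $\rho=\lambda-1$ the margin is $\eta(\lambda-1)\ge\frac\eta2 h$, as required. This is the largest value allowed both by the $q'$ computation and by the requirement $B_1\subseteq\lambda B_1+z$.
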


\begin{proof}
Let us denote
\[
    \lambda:=1+\frac{t-\tau}{1+\tau}\in (1, 2),\qquad r:=\lambda-1=\frac{t-\tau}{1+\tau}\in (0, 1).
\]
We first notice that
\begin{equation}\label{PRi}
    \lambda A_\tau+B_r\subseteq A_t.
\end{equation}
Indeed, let $q\in[0,\tau]$ and set $q':=\lambda(1+q)-1$. Then, we see that $0\le q'\le t$, and, since $q'=\lambda q+r$,
\[
    \lambda D_q(A_0)+B_r =\lambda(1+q)A_0+B_{\lambda q+r} =(1+q')A_0+B_{q'}
      =D_{q'}(A_0) \subseteq A_t.
\]

Pick now a vector $v\in B_r$ and define
\[
    T_v(z):=\lambda z+v.
\]
Since $\lambda>1+|v|$, using \eqref{PRi} we see that
\[
    B_1\subseteq T_v(B_1)=B_\lambda(v)\qquad{\mbox{and}}\qquad T_v(A_\tau)\subseteq A_t.
\]
 
By scale invariance,  $T_v(E^{(\tau)})$ is minimizing  in
$T_v(B_1)$, and hence it is minimizing in $B_1$ with exterior datum
\[
     T_v(E^{(\tau)})\setminus B_1\subsetneq G^{(t)}.
\]
Hence, 
\[
    T_v(E^{(\tau)}) \subseteq E^{(t)}    \qquad\forall v\in B_r.
    \]

In particular, we have shown that
\[
    B_r(\lambda x)\subseteq E^{(t)},\qquad\forall x\in \partial E^{(\tau)}
\]

Finally, for $x\in \partial E^{(\tau)}\cap B_{1-\eta}$, since $|\lambda x - x|\le r(1-\eta)$
we have 
\[
    B_{\eta r}(x)\subseteq B_r(\lambda x)\subseteq E^{(t)}.
\]
Since $r \ge (t-\tau) / 2$, we are done.
\end{proof}

This gives:
\begin{corollary}\label{cor:space-time-separation}
If $0\le \tau< t<1$, then
\begin{equation}\label{eq:space-time-separation.0}
\partial E^{(\tau)}\cap\partial E^{(t)}\cap B_1=\varnothing.
\end{equation}
Moreover, for $\eta\in(0,1)$, if
\[
x\in\partial E^{(\tau)}\cap B_{1-\eta},
\qquad
y\in\partial E^{(t)},
\qquad \tau<t,
\]
then
\begin{equation}\label{eq:space-time-separation}
|x-y|\ge\frac\eta2(t-\tau).
\end{equation}
\end{corollary}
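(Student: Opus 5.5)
The plan is to obtain Corollary~\ref{cor:space-time-separation} directly from Lemma~\ref{lem:linear-retraction}, using the convention that $\partial E$ denotes the essential (measure-theoretic) boundary. That is, $y\in\partial E$ if and only if $0<|E\cap B_\rho(y)|<|B_\rho|$ for every $\rho>0$. We prove the quantitative bound~\eqref{eq:space-time-separation} first and then deduce~\eqref{eq:space-time-separation.0} from it.

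For~\eqref{eq:space-time-separation}, fix $\eta\in(0,1)$, $\tau<t$, $x\in\partial E^{(\tau)}\cap B_{1-\eta}$ and $y\in\partial E^{(t)}$. Suppose by contradiction that $|x-y|<\frac\eta2(t-\tau)$. By Lemma~\ref{lem:linear-retraction}, the ball $B_{\frac\eta2(t-\tau)}(x)$ is contained in $E^{(t)}$ up to a null set. Because this ball is open and contains $y$, it contains a small ball $B_\rho(y)$. Then $|E^{(t)}\cap B_\rho(y)|=|B_\rho|$, which contradicts $y\in\partial E^{(t)}$. The only point requiring care is exactly this compatibility between the "up to null sets" inclusion and the essential-boundary notion. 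The essential boundary is insensitive to null modifications, so the argument is consistent.

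For~\eqref{eq:space-time-separation.0}, take $x\in\partial E^{(\tau)}\cap\partial E^{(t)}\cap B_1$. Since $B_1$ is open, we can choose $\eta\in(0,1)$ with $x\in B_{1-\eta}$. Applying~\eqref{eq:space-time-separation} with $y=x$ gives $0=|x-x|\ge\frac\eta2(t-\tau)>0$, which is impossible. I do not expect any real obstacle. The whole content is contained in Lemma~\ref{lem:linear-retraction}, and the corollary is just a matter of unwinding the definition of essential boundary.
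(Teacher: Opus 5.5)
Your proposal is correct and matches the paper. The paper simply writes ``This gives:'' after Lemma~\ref{lem:linear-retraction}, leaving implicit exactly your argument: the ball $B_{\frac\eta2(t-\tau)}(x)$ lies in $E^{(t)}$ up to a null set, so it contains no essential-boundary point of $E^{(t)}$, and the disjointness in $B_1$ follows by choosing $\eta\in(0,1)$ with $x\in B_{1-\eta}$.
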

 
\subsection{Conical results} 
Let us now state a sequence of preliminary results on general cones and $s$-minimal cones to be used in the following: 

The first result is straightforward from the definition of cone (we include the simple proof for completeness):

\begin{lemma}\label{lem:homogeneous-inclusinos}
Let $C,D\subseteq\R^n$ be cones and let $e\in\R^n$.
\begin{enumerate}
\item If $C-e\subseteq D$, then $C-\rho e\subseteq D$ for every $\rho>0$.
\item If $C-e\subseteq D$ and $D$ is closed, then $C\subseteq D$.
\end{enumerate}
\end{lemma}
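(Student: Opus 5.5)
The plan is to use only the defining property of a cone: $C$ is a cone if $\rho C=C$ for every $\rho>0$. No minimality is involved here; the lemma is purely about dilations.

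For item (1), assume $C-e\subseteq D$ and fix $\rho>0$. Dilate the inclusion by the factor $\rho$. Since dilation preserves inclusions, we get $\rho(C-e)\subseteq\rho D$. Now $\rho(C-e)=\rho C-\rho e=C-\rho e$, because $C$ is a cone, and $\rho D=D$, because $D$ is a cone. Hence $C-\rho e\subseteq D$. If the inclusions are only meant up to null sets, the same argument applies, since dilations map null sets to null sets.

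For item (2), take $x\in C$. By item (1), $x-\rho e\in D$ for every $\rho>0$. Letting $\rho\searrow0$ gives $x-\rho e\to x$, and since $D$ is closed, $x\in D$. Therefore $C\subseteq D$.

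The only point needing care is the measure-theoretic interpretation of item (2). If the inclusions hold only almost everywhere, the pointwise limit argument is not available as stated. In that case I would pass to the closed representative of $D$. For a.e. $x\in C$, the points $x-\rho_k e$ with $\rho_k=1/k$ lie in $D$ for all $k$, after discarding a countable union of null sets. Closedness of $D$ then gives $x\in D$, so $C\subseteq D$ up to a null set. Apart from this bookkeeping, the argument is a one-line scaling observation and presents no real obstacle.
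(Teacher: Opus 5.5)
Your proof is correct and follows the paper's argument exactly. Item (1) is the scaling identity $C-\rho e=\rho(C-e)\subseteq\rho D=D$, and item (2) follows from (1) by letting $\rho\searrow0$ and using that $D$ is closed. Your extra remark on inclusions holding only up to null sets, where you discard a countable union of null sets along $\rho_k=1/k$, is a correct refinement that the paper does not spell out.
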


\begin{proof} To prove the first claim, we observe that
$$ C-\rho e=\rho\left(\frac{C}\rho-e\right)=\rho(C-e)\subseteq\rho D=D.$$

To prove the second claim, we notice that, for all $x\in C$ and~$\rho>0$,
by the first claim we have that~$x-\rho e\in D$ and, consequently,
$$x=\lim_{\rho\searrow0}x-\rho e\in\overline{D}=D,$$ as advertised.
\end{proof}

Next is a useful observation about conical graphs.

\begin{lemma}\label{GRPH}
Let $C$ be a cone, $e\in\mathbb S^{n-1}$ and~$r>0$.  Assume that
\begin{equation}\label{monotone-cone-inclusion.0}
C-e\subseteq C.
\end{equation}
Assume also that
\begin{equation}\label{monotone-cone-inclusion.02}
B_r(-e)\subseteq C\qquad{\mbox{and}}\qquad B_r(e)\subseteq C^c.
\end{equation}

Then, $C$ has a graphical structure in the $e$-direction.
\end{lemma}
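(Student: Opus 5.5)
Write $x=x'+x_ne$ with $x'\perp e$, and interpret ``graphical structure in the $e$-direction'' as the existence of a function $g:e^\perp\to[-\infty,+\infty]$ such that, up to null sets, $C=\{x:\ x\cdot e<g(x')\}$. We will show that $g$ is in fact finite and positively $1$-homogeneous, which is what is needed later. The proof has three steps: monotonicity of vertical lines, boundedness of the vertical sections near the origin, and a scaling argument to get finiteness everywhere.

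\emph{Step 1: monotonicity along vertical lines.} By Lemma~\ref{lem:homogeneous-inclusinos}(1), assumption \eqref{monotone-cone-inclusion.0} gives
\[
C-\rho e\subseteq C\qquad\text{for every }\rho>0.
\]
So if $x\in C$, the whole downward ray $\{x-\rho e:\rho\ge0\}$ lies in $C$. Hence each vertical line $\{x'+\sigma e:\sigma\in\R\}$ meets $C$ in a set that is ``downward closed''. Define
\[
g(x'):=\sup\{\sigma:\ x'+\sigma e\in C\}\in[-\infty,+\infty].
\]
The line then meets $C$ in $(-\infty,g(x'))$, possibly together with its endpoint. Since identities are understood up to null sets, some care is needed. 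The plan is to apply the inclusion pointwise to a fixed representative, for instance the set of Lebesgue density-one points, which is itself a cone and still satisfies the inclusion. Fubini then shows that $C$ coincides a.e. with the subgraph of $g$. Conicity of $C$ gives $g(\lambda x')=\lambda g(x')$ for $\lambda>0$.

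\emph{Step 2: finiteness near the origin.} We use \eqref{monotone-cone-inclusion.02}. For $|x'|<r$, the point $x'-e$ lies in $B_r(-e)\subseteq C$, so $g(x')\ge-1$. Also $x'+e\in B_r(e)\subseteq C^c$, and by Step 1 the point $x'+\sigma e$ is in $C^c$ for all $\sigma\ge1$: otherwise, moving down from a point of $C$ would force $x'+e\in C$. Hence $-1\le g(x')\le1$ on the disk $\{|x'|<r\}$.

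\emph{Step 3: finiteness everywhere.} By homogeneity, $|g(x')|\le |x'|/r$ for all $x'$. Thus $g$ is finite and of linear growth, and in particular $g(0)=0$.

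The main obstacle is the measure-theoretic bookkeeping in Step 1: making ``up to null sets'' compatible with pointwise rays. I would handle it by fixing the density-point representative throughout. For this representative the translates $C-\rho e$ are again density-point representatives, so the inclusion holds everywhere, not merely almost everywhere. Lipschitz regularity of $g$ is not claimed here. If it were needed, one would combine monotonicity with the openness of the balls via the cone structure, but I expect the statement only requires the subgraph representation.
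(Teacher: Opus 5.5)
Your proof is correct and follows essentially the paper's argument. The paper also uses Lemma~\ref{lem:homogeneous-inclusinos}(1) to get $C-\R_+e\subseteq C$, cones off the two balls to obtain $\{x\cdot e<-|x'|/r\}\subseteq C$ and $\{x\cdot e>|x'|/r\}\subseteq C^c$, and defines the height function as a supremum with values in $[-|x'|/r,|x'|/r]$; you simply apply homogeneity to $g$ after bounding it on the disk, rather than to the balls beforehand, and your density-one representative makes explicit the measure-theoretic bookkeeping that the paper leaves implicit.
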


\begin{proof} Up to a rotation, we suppose that~$e=e_n:=(0,\dots,0,1)$. By~\eqref{monotone-cone-inclusion.0}
and Lemma \ref{lem:homogeneous-inclusinos}(1), we know that
\begin{equation}\label{monotone-cone-inclusion.03}
C-\R_+ e_n\subseteq C.\end{equation}

Also, by~\eqref{monotone-cone-inclusion.02},
\begin{equation*}
\left\{ x_n>\frac{|x'|}r\right\}=\R_+\left\{ x_n=1>\frac{|x'|}r\right\}
\subseteq \R_+ B_r(e_n)\subseteq\R_+ C^c=C^c
\end{equation*}
and, similarly,
\begin{equation*}
\left\{ x_n<-\frac{|x'|}r\right\}=\R_+\left\{ x_n=-1<-\frac{|x'|}r\right\}
\subseteq \R_+ B_r(-e_n)\subseteq\R_+ C=C.
\end{equation*}
As a consequence,
\begin{equation*}
\partial C\subseteq \left\{ |x_n|\le\frac{|x'|}r\right\}.
\end{equation*}

Furthermore, by~\eqref{monotone-cone-inclusion.0}, we know that~$C$ is a subgraph, namely
there exists~$u:\R^{n-1}\to\R$ such that
\begin{equation*} \partial C=\big\{ (x',u(x')),\; x'\in\R^{n-1}\big\},\end{equation*}
with
\begin{equation*}
u(x'):=\sup\left\{ t\in\R {\mbox{ s.t. $ (x',\tau)\in C$ for all~$\tau<t$}}\right\}
\in\left[-\frac{|x'|}r,\frac{|x'|}r\right].
\qedhere\end{equation*}
\end{proof}


The following is the analogue of \cite[Lemma 4.2]{2023arXiv230813209F}
for nonlocal minimal cones (monotone cones are either halfspaces or invariant in the direction of monotonicity):

\begin{lemma}\label{lem:monotone-cones}
Let $C$ be an $s$-minimal cone and $e\in\mathbb S^{n-1}$.  Assume that
\begin{equation}\label{monotone-cone-inclusion}
C-e\subseteq C.
\end{equation}
Then either $C$ is $e$-invariant (namely,
$C+\R e=C$),
or $C$ is a halfspace.
\end{lemma}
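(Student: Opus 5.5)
By Lemma~\ref{lem:homogeneous-inclusinos}(1), the hypothesis \eqref{monotone-cone-inclusion} gives $C-\rho e\subseteq C$ for every $\rho>0$. In other words, the family of translates $C_\rho:=C-\rho e$ is monotone nondecreasing in $\rho$. Each $C_\rho$ is $s$-minimal in every ball, being a translate of an $s$-minimal set.

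The plan is to compare $C$ with a translate via the strong maximum principle for nonlocal minimal surfaces (equivalently, the equality case in Lemma~\ref{LE1b} combined with the Euler--Lagrange equation). There are two cases.

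\emph{Case 1: $\partial C$ and $\partial(C-e)$ touch.} Suppose they share a point, so that $C-e\subseteq C$ with a common boundary point. Either this point can be taken in the regular part, or one uses the viscosity formulation of the nonlocal mean curvature, which holds at every boundary point touched from one side. At such a point $x_0$, the nonlocal mean curvatures $H_s[C](x_0)=H_s[C-e](x_0)=0$ are both well defined. Their difference is then
\[
\int_{\R^n}\frac{\chi_{C}-\chi_{C-e}}{|x_0-y|^{n+s}}\,dy=0 ,
\]
up to the usual sign conventions. Since the integrand is nonnegative, this forces $C=C-e$ up to null sets. Hence $C$ is invariant under $e$, and therefore under $\rho e$ for every $\rho>0$. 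Taking closures and using the conical structure (as in Lemma~\ref{lem:homogeneous-inclusinos}(2)), we get $C+\R e=C$.

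\emph{Case 2: the boundaries are disjoint.} If $\partial(C-e)\cap\partial C=\varnothing$, homogeneity rescales this to $\partial(C-\rho e)\cap\partial C=\varnothing$ for all $\rho>0$. Every point of $\partial C$ then lies strictly inside $C+\rho e$, for all $\rho$, so each line in direction $e$ meets $\partial C$ at most once. We aim to show this makes $C$ a graph with good cone estimates, as in Lemma~\ref{GRPH}, and then invoke the Bernstein-type result for $s$-minimal graphs: entire $s$-minimal graphs which are cones are halfspaces (Figalli--Valdinoci, or Cabré--Cozzi's gradient estimate).

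To reach the setting of Lemma~\ref{GRPH}, we need $-e\in\operatorname{int}C$ and $e\in\operatorname{int}C^c$. I would first try the following argument. If $-e\in\partial C$, then $0\in\partial(C+e)\cap\ldots$; more precisely, scaling gives $-\rho e\in\partial C$ for all $\rho$, so the ray lies in $\partial C$. Comparing with the translate then produces an interior touching point, which falls into Case 1. The situation $e\in\partial C$ is handled similarly.

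If instead $-e\in C^c$, then the monotonicity $C-\R_+e\subseteq C$ forces the whole line $\R e$ (through the vertex) into $C^c$, and also forces $C$ itself to be a subgraph avoiding that direction. This configuration is the delicate one. Here one uses that $C$ is nontrivial (it is neither empty nor the whole space, as $0\in\partial C$) together with the density estimates for $s$-minimal sets to find openness around $\pm e$.

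I expect the main obstacle to be justifying the touching argument at possibly singular points, and ensuring that the graph is Lipschitz so that the nonlocal Bernstein theorem applies. A cleaner alternative I would keep in reserve is a dimension-reduction/blow-down compactness argument: the family $C-\rho e$ converges as $\rho\to\infty$ to an $e$-invariant minimal cone, and the strong maximum principle then splits the cases.
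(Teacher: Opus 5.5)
Your architecture is the paper's: if $\partial C$ and $\partial(C-e)$ meet, the strict maximum principle gives $C=C-e$ and hence $e$-invariance; if they do not meet, you aim for the hypotheses of Lemma~\ref{GRPH} and conclude via Cabr\'e--Cozzi regularity of nonlocal minimal graphs.

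\textbf{The gap.} The passage to Lemma~\ref{GRPH} is not completed. You leave the configuration $-e\in\mathrm{int}\,C^c$ open, calling it ``the delicate one'' and pointing vaguely to density estimates. You also never treat $e\in\mathrm{int}\,C$. The missing observation is that neither configuration can occur. Set aside the trivial cones $\varnothing$ and $\R^n$, which are $e$-invariant. Then $0\in\partial C$, and the monotonicity $C-e\subseteq C$, equivalently $C^c+e\subseteq C^c$, gives
\[
-e=0-e\in\overline{C}-e=\overline{C-e}\subseteq\overline{C},
\qquad
e=0+e\in\overline{C^c}+e=\overline{C^c+e}\subseteq\overline{C^c}.
\]
This leaves two possibilities:
\begin{itemize}
\item One of $\pm e$ lies on $\partial C$. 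Then $C$ and $C-e$ touch (at $0$ if $e\in\partial C$, at $-e$ if $-e\in\partial C$), which is your Case~1.
\item Otherwise $-e\in\mathrm{int}\,C$ and $e\in\mathrm{int}\,C^c$, which is exactly~\eqref{monotone-cone-inclusion.02} for some $r>0$.
\end{itemize}
This is precisely the dichotomy the paper uses, and no density estimate is involved. In fact the picture you describe is self-contradictory: $B_r(-e)\subseteq C^c$ together with $C^c+\R_+e\subseteq C^c$ would give $B_r(0)\subseteq C^c$, contradicting $0\in\partial C$.

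\textbf{Case~1.} Your self-contained justification of the strict maximum principle does not work in general. A priori you only know that the contact set contains a ray through the vertex (for instance $\{\sigma e:\sigma\ge0\}$ when $e\in\partial C$). Nothing prevents that ray from lying in $\mathrm{Sing}(\partial C)$, which cannot be excluded when the lemma is applied with $n\ge n_s^*+2$. At such points the nonlocal mean curvature need not exist pointwise. The viscosity formulation also does not help, since it requires touching by a smooth set (a tangent ball), not by another possibly singular $s$-minimal set. You must invoke the strict maximum principle for $s$-minimal sets with no regularity assumption at the contact point, as the paper does.

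\textbf{The Lipschitz concern is unnecessary.} Lemma~\ref{GRPH} already gives $|u(x')|\le|x'|/r$. The Cabr\'e--Cozzi gradient estimate then yields smoothness of the nonlocal minimal graph with no a priori Lipschitz bound, and a cone that is smooth at its vertex is a halfspace. The blow-down alternative is not needed.
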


\begin{proof}
 We can assume that~$C\ne\R^n$, otherwise we are done. Suppose that
\begin{equation}\label{2wsPSK0eoj2orm23oifujevw0}
{\mbox{either~$e$ or $-e$ belongs to $\partial C$.}}
\end{equation}
If~$e\in\partial C$, then $0=e-e\in\partial(C-e)$, and so $C$ and~$C-e$ touch at the origin. If~$-e\in\partial C$, then $-e=0-e\in\partial(C-e)$, and they touch at~$-e$.
In both cases, $C-e\subseteq C$ and the strict maximum principle of~\cite{2023arXiv230801697D} gives
\[
C-e=C, 
\]
and hence, also~$C+e=C$. 

Thus, applying Lemma~\ref{lem:homogeneous-inclusinos}-(1) to the inclusions~$C-e\subseteq C$ and~$C+e\subseteq C$, we obtain
\[
C+\R e\subseteq C
 \subseteq C+\R e.
\]
That is, $C$ is $e$-invariant.

Accordingly, we now suppose that~\eqref{2wsPSK0eoj2orm23oifujevw0} is violated,
namely~$e\not\in\partial C$ and~$-e\not\in\partial C$. Thus, since, by~\eqref{monotone-cone-inclusion},
$$ -e=0-e\in\overline{C}-e=\overline{C-e}\subseteq\overline{C}
$$
and
$$ e=0+e\in\overline{C^c}+e=\overline{C^c+e}=
\overline{(C+e)^c}\subseteq\overline{C^c},$$
we conclude that~$-e$ lies in the interior of $C$ and~$e$ lies in the interior of $C^c$. 

This allows us to use Lemma~\ref{GRPH} and conclude that~$C$
is a nonlocal minimal graph. But then, by~\cite{MR3934589}, we obtain that~$\partial C$ is a smooth graph. The conical structure of~$C$ thereby
gives that~$C$ is a halfspace.
\end{proof}

\subsection{Blow-up across times}  Let us recall the
monotonicity formula of~\cite{MR2675483}.

{F}rom the procedural point of view,  given a set~$E\subseteq\R^n$,
a point~$x\in\partial E$, and~$r\in(0,+\infty)$,
what we need is  the existence of
some ``quantity'', say~$W_s(E, x, r)=W_s(E-x,0,r)$, which:
\begin{itemize}
\item[(P1)] is increasing in~$r$,
\item[(P2)] is constant in~$r$ if and only if $E-x$ is a cone,
\item[(P3)] for all~$\lambda>0$, it holds that~$
W_s(\lambda E,0,r)=W_s\left(E,0,\frac{r}\lambda\right)$,
\item[(P4)] $W_s(E_k,x_k,r) \to W_s(E,x_0,r)$ as~$k \to +\infty$ whenever~$ E_k \to E$
locally in~$L^1(\R^n)$
and~$x_k \to x_0$.
\end{itemize}

Thus, if~$\Phi_E(r)$ is the function introduced at the beginning of Section~8
in~\cite{MR2675483}, we define
$$ W_s(E,x,r):=\Phi_{E-x}(r)$$
and we have that~(P1) above follows from~\cite[Theorem 8.1]{MR2675483},
that~(P2) is guaranteed by~\cite[Corollary~8.2]{MR2675483},
that~(P3) is a consequence of scaling (see the proof of Theorem~9.2
in~\cite{MR2675483}),
and~(P4) follows from the last claim in~\cite[Proposition 9.1]{MR2675483}
(combined with the continuity of the translations in Lebesgue spaces).

The monotonicity of~$W_s$ thus allows one to define
\[
\omega_s(E,x):=\lim_{r\searrow0}W_s(E,x,r).
\]

The above setting also permits the following extension of~(P4):

\begin{lemma}\label{LE4724rf}
Suppose that~$ E_k \to E$
locally in~$L^1(\R^n)$,
that~$x_k \to x_0$, and that~$r_k\searrow0$, as~$k \to +\infty$.

Assume also that
\begin{equation}\label{AGGome}
\liminf_{k\to+\infty} \omega_s(E_k,x_k)\ge \omega_s(E,x_0).\end{equation}

Then, $W_s(E_k,x_k,r_k) \to \omega_s(E,x_0)$.
\end{lemma}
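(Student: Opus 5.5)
The argument is a squeeze: the monotonicity property (P1) gives an upper bound, the lower-semicontinuity assumption \eqref{AGGome} gives a lower bound, and both limits equal $\omega_s(E,x_0)$.

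\emph{Upper bound.} Fix $\rho>0$. Since $r_k\searrow0$, for $k$ large we have $r_k\le\rho$. By (P1), $W_s(E_k,x_k,r_k)\le W_s(E_k,x_k,\rho)$. By (P4), the right-hand side converges to $W_s(E,x_0,\rho)$. Hence
\[
\limsup_{k\to+\infty}W_s(E_k,x_k,r_k)\le W_s(E,x_0,\rho)\qquad\text{for every }\rho>0.
\]
Letting $\rho\searrow0$ and using the definition of $\omega_s(E,x_0)$ as the monotone limit, we obtain $\limsup_{k}W_s(E_k,x_k,r_k)\le\omega_s(E,x_0)$.

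\emph{Lower bound.} By (P1) again, $W_s(E_k,x_k,r)$ is nondecreasing in $r$. So its limit as $r\searrow0$, which is $\omega_s(E_k,x_k)$, is also its infimum over $r>0$. In particular $W_s(E_k,x_k,r_k)\ge\omega_s(E_k,x_k)$ for every $k$. Taking the liminf and using \eqref{AGGome},
\[
\liminf_{k\to+\infty}W_s(E_k,x_k,r_k)\ge\liminf_{k\to+\infty}\omega_s(E_k,x_k)\ge\omega_s(E,x_0).
\]

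Combining the two bounds gives $W_s(E_k,x_k,r_k)\to\omega_s(E,x_0)$. The only point needing care is that (P4) is used at a \emph{fixed} radius $\rho$ before letting $\rho\searrow0$, so no uniform-in-$r$ convergence is required. One should also confirm that $\omega_s$ is finite, or else allow the value $-\infty$. For $s$-minimal sets $\Phi$ is bounded below, so the limit is finite. Even without that, the inequalities above hold in the extended reals.
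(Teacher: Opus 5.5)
Your proof is correct and is essentially the paper's argument. The upper bound uses (P1) at a fixed radius, then (P4), then lets the radius shrink to zero. The lower bound uses (P1) to get $W_s(E_k,x_k,r_k)\ge\omega_s(E_k,x_k)$ and then applies \eqref{AGGome}.
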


\begin{proof} Let~$\epsilon>0$ and suppose that~$k$ is sufficiently large such that~$r_k\in(0,\epsilon)$. Then, using~(P1),
we know that~$W_s(E_k,x_k,r_k)\le W_s(E_k,x_k,\epsilon)$ and accordingly, by~(P4),
$$ \limsup_{k\to+\infty} W_s(E_k,x_k,r_k)\le \limsup_{k\to+\infty} W_s(E_k,x_k,\epsilon)
=W_s(E,x_0,\epsilon).$$
Taking now~$\epsilon\searrow0$, we obtain
\begin{equation}\label{SOojlqmsBMS}
\limsup_{k\to+\infty} W_s(E_k,x_k,r_k)\le\omega_s(E,x_0).
\end{equation}

Moreover, using again~(P1), for all~$k\in\N$ and~$\delta\in(0,r_k)$,
we have that~$W_s(E_k,x_k,r_k)\ge W_s(E_k,x_k,\delta)$ and therefore~$
W_s(E_k,x_k,r_k)\ge \omega_s(E_k,x_k)$.
We now use~\eqref{AGGome} and conclude that
$$\liminf_{k\to+\infty}W_s(E_k,x_k,r_k)\ge
\liminf_{k\to+\infty} \omega_s(E_k,x_k)\ge \omega_s(E,x_0).$$
This and~\eqref{SOojlqmsBMS} yield the desired result.
\end{proof}

This setting entails the following result, which can be
considered the analogue of \cite[Lemma~4.5]{2023arXiv230813209F}
for nonlocal minimal surfaces:

\begin{lemma} \label{lem:variable-center-blowup}
Let
\[
(x_k,t_k)\to (x_0,t_0), \qquad x_k\in\partial E^{(t_k)}, \qquad
x_0\in\partial E^{(t_0)}\cap B_1,
\]
 with $x_k \neq x_0$. Assume that, as $k\to+\infty$,
\begin{equation}\label{omega-convergence}
\omega_s(E^{(t_k)},x_k)\to \omega_s(E^{(t_0)},x_0),
\end{equation}
and that $E^{(t_k)}\to E^{(t_0)}$ locally in $L^1(\R^n)$.  

Then, up to a subsequence,
\[
\frac{E^{(t_k)}-x_k}{|x_k - x_0|}\to  C
\qquad\text{locally in }L^1(\R^n),
\]
where $C$ is an $s$-minimal cone.  

Additionally,
\begin{equation}\label{SINGO}
{\mbox{if each $x_k\in {\rm Sing}(\partial E^{(t_k)})$, then $0\in{\rm Sing}(\partial C)$.}}\end{equation}
\end{lemma}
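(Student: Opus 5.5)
Set $r_k:=|x_k-x_0|\searrow0$ and $F_k:=(E^{(t_k)}-x_k)/r_k$. Each $F_k$ is $s$-minimal in $(B_1-x_k)/r_k$, and these domains exhaust $\R^n$ since $x_k\to x_0\in B_1$. The uniform density and energy estimates for $s$-minimal sets in \cite{MR2675483} give local perimeter bounds for the $F_k$. By compactness of $s$-minimizers (closedness under $L^1_{\rm loc}$ convergence together with uniform density estimates), a subsequence converges locally in $L^1(\R^n)$ and locally in Hausdorff distance of boundaries to some $s$-minimal set $C$ in $\R^n$ with $0\in\partial C$, because $0\in\partial F_k$.

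To show that $C$ is a cone, I use the monotonicity quantity. By (P3), $W_s(F_k,0,\rho)=W_s(E^{(t_k)},x_k,\rho r_k)$ for every $\rho>0$. Hypothesis \eqref{omega-convergence} gives \eqref{AGGome}, and $\rho r_k\searrow0$, so Lemma~\ref{LE4724rf} yields $W_s(E^{(t_k)},x_k,\rho r_k)\to\omega_s(E^{(t_0)},x_0)$. By (P4), the left-hand side converges to $W_s(C,0,\rho)$. Hence $W_s(C,0,\rho)=\omega_s(E^{(t_0)},x_0)$ for every $\rho>0$, and (P2) shows that $C$ is a cone with vertex at $0$.

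For \eqref{SINGO}, I argue by contradiction. Suppose $0\in{\rm Reg}(\partial C)$. Since $C$ is a cone, it must then be a halfspace: the blow-up of $C$ at $0$ is $C$ itself, and a cone that is smooth at its vertex is flat. The improvement of flatness theory for $s$-minimal surfaces (\cite{MR2675483}, see also \cite{MR3107529}) then applies. Indeed, $F_k\to C$ in $L^1_{\rm loc}$, and by the density estimates this upgrades to Hausdorff convergence of boundaries in $B_1$, so $\partial F_k\cap B_1$ lies in an arbitrarily thin slab around $\partial C$ for large $k$. The $\varepsilon$-regularity theorem makes $\partial F_k$ a $C^{1,\alpha}$ graph in $B_{1/2}$, hence smooth. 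Scaling back, $x_k\in{\rm Reg}(\partial E^{(t_k)})$, contradicting the assumption $x_k\in{\rm Sing}(\partial E^{(t_k)})$.

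The main subtlety is that $\varepsilon$-regularity needs smallness of the flatness at a uniform scale, even though the domains and exterior data vary with $k$. The exterior data $E^{(t_k)}\setminus B_1$ become, after rescaling, far away, at distance of order $(1-|x_0|)/r_k\to\infty$. Their contribution to the nonlocal tail is therefore controlled uniformly, so the hypotheses of the $\varepsilon$-regularity theorem hold uniformly in $k$. I expect checking this uniformity to be the only delicate point. Note that the assumption $x_k\neq x_0$ is used only to make $r_k>0$, and the normalization by $|x_k-x_0|$, rather than an arbitrary sequence of scales, plays no further role in this lemma.
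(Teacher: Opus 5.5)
Your proposal is correct and follows essentially the same route as the paper. The steps match: compactness of the rescalings $F_k$ (the paper gets the local bound by comparing with $F_k\cup B_{2R}$ and using $W^{s,1}\Subset L^1$), then Lemma~\ref{LE4724rf} with (P2)--(P4) to show the limit is a cone, and for \eqref{SINGO} a cone regular at the origin is a halfspace, after which density estimates and improvement of flatness from \cite{MR2675483} give the contradiction. The uniformity issue you flag is not actually delicate: each $F_k$ is $s$-minimal in balls of radius tending to infinity, so the $\varepsilon$-regularity theorem applies to $F_k$ in $B_1$ exactly as stated in \cite{MR2675483}, with nothing extra to check.
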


\begin{proof}
The proof is a technical modification of the one in \cite[Lemma 4.5]{2023arXiv230813209F}
(for instance, the assumption of uniform convergence in \cite[Lemma 4.5]{2023arXiv230813209F} needs to be replaced here by
local convergence in~$L^1(\R^n)$).

Let~$r_k:=|x_k-x_0|$ and~$F_k:=\frac{E^{(t_k)}-x_k}{|x_k - x_0|}$.
We remark that~$F_k$ is $s$-minimal in~$B_{(1-|x_0|)/2r_k}$ for $k$ large and, therefore, given~$R>0$,
for large~$k$, we have that
$B_{(1-|x_0|)/2r_k}\supseteq B_{2R}$ and
\[
\Per_s(F_k,B_{2R})
\le
\Per_s(F_k\cup B_{2R},B_{2R}) \le \Per_s(B_{2R},B_{2R})
\le C_R .
\]
 
Hence
\[
\iint_{B_R\times B_R}
\frac{|\chi_{F_k}(x)-\chi_{F_k}(y)|}{|x-y|^{n+s}}\,dx\,dy
\le C_R .
\]
Since also $\|\chi_{F_k}\|_{L^1(B_R)}\le |B_R|$, the sequence
$\chi_{F_k}$ is bounded in $W^{s,1}(B_R)$. By the compact embedding
$W^{s,1}(B_R)\Subset L^1(B_R)$, up to a subsequence $\chi_{F_k}$ converges locally in~$L^1(\R^n)$.

We denote by~$C$ this limit. We know that~$C$
is $s$-minimal (by~\cite[Theorem~3.3]{MR2675483}).

Notice also that condition~\eqref{AGGome} is fulfilled, thanks to~\eqref{omega-convergence}, therefore, by Lemma~\ref{LE4724rf}, for all~$r>0$ we have that
\begin{equation}\label{asPrH}\lim_{k\to+\infty}W_s(E^{(t_k)},x_k,r_k r) = \omega_s(E^{(t_0)},x_0).\end{equation}
Moreover, in view of~(P3),
$$W_s(F_k,0,r)=W_s\left( \frac{E^{(t_k)}-x_k}{r_k},0,r\right)=
W_s(E^{(t_k)}-x_k,0,r_kr)=W_s(E^{(t_k)},x_k,r_kr).
$$
It follows from this and~\eqref{asPrH} that
$$\lim_{k\to+\infty}W_s(F_k,0,r)=\lim_{k\to+\infty}W_s(E^{(t_k)},x_k,r_kr)=\omega_s(E^{(t_0)},x_0).
$$
Hence, by~(P4),
$$W_s(C,0,r)=\omega_s(E^{(t_0)},x_0),
$$
and we stress that the latter quantity is independent of~$r$. Consequently, by~(P2), we conclude that~$C$ is a cone, as desired.

It remains only to prove~\eqref{SINGO}. To accomplish this goal, suppose that~$E^{(t_k)}$ is singular at~$x_k$ (and accordingly~$F_k$ is singular at the origin), but, for the sake of contradiction, that~$C$ is smooth at the origin. By uniform density estimates (see~\cite[Theorem~4.1]{MR2675483}), we know that~$C\ne\varnothing$ and~$C\ne\R^n$, therefore necessarily~$C$
is a halfspace. Then, by~\cite[Corollary~4.4(ii)]{MR2675483},
we have that~$F_k\cap B_1$ lies in an arbitrarily small neighborhood
of the halfspace~$C$
as soon as~$k$ is large enough. 

Consequently, by the improvement of flatness result in~\cite[Theorem~6.1]{MR2675483}, one finds that~$\partial F_k\cap B_{1/2}$ does not contain singular points,
in contradiction with our assumptions. 
\end{proof}

\subsection{Proof of generic regularity} We introduce  the set of singular points (in ``space-time''): 
\[
\cS:=\big\{(x,t)\in B_1\times(0,1):x\in{\rm Sing}(\partial E^{(t)})\big\}.
\]
We denote by $\cI$ the set of nonuniqueness times, i.e. the collection of~$t\in(0,1)$ for which the minimizer~$E^{(t)}$ is not unique.
We have, in light of Theorem~\ref{gene-uni}, that~$\cI$ is at most countable.

We also denote by~$\cD$ the set of discontinuity times for the
local convergence in~$L^1(\R^n)$, that is we define $$\mathcal{T}:= 
\big\{t\in(0,1) \mbox{ s.t. $
E^{(t_k)}\to E^{(t)}$ locally in $L^1(\R^n)$
whenever~$ t_k \to t$}\big\} $$
and~$\cD:=(0,1)\setminus\mathcal{T}$.
We have that:

\begin{lemma}
\label{lem:Dcountable}
$\cD$ is at most countable.
\end{lemma}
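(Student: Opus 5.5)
The plan is to reduce continuity of $t\mapsto E^{(t)}$ to a monotonicity argument in a single scalar quantity. By Corollary~\ref{Cor1p} (and \eqref{nested-maximal-branch}), every choice of minimizers is nested: if $\tau<t$, any minimizer for $G^{(\tau)}$ is contained in any minimizer for $G^{(t)}$. Inside $B_1$, consider
\[
m(t):=|E^{(t)}\cap B_1|,
\]
where, at the countably many times $t\in\cI$ of nonuniqueness, $E^{(t)}$ is any fixed choice. Then $m$ is nondecreasing on $(0,1)$, since $E^{(\tau)}\cap B_1\subseteq E^{(t)}\cap B_1$ for $\tau<t$. A monotone function has at most countably many jump points. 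I will show that
\[
\cD\subseteq \{\text{discontinuity points of } m\}\cup\cI,
\]
which gives the claim.

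Fix $t\notin\cI$ at which $m$ is continuous, and let $t_k\to t$. Outside $B_1$ the sets $E^{(t_k)}\setminus B_1=G^{(t_k)}$ are explicit. For the exterior part I will check directly from \eqref{def-special-family} that $A_{t_k}\to A_t$ locally in $L^1$. Since $A_t$ is increasing, this means showing that
\[
\Big|\Big(\bigcap_{t'>t}A_{t'}\setminus A_t\Big)\cap B_R\Big|=0
\quad\text{and}\quad
\Big|\Big(A_t\setminus\bigcup_{t'<t}A_{t'}\Big)\cap B_R\Big|=0 .
\]
These should follow from the structure $D_q(A_0)=(1+q)A_0+B_q$. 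In the first case, $\bigcap_{t'>t}A_{t'}$ differs from $A_t$ only by the boundary of $A_t$ or a set of $D_q$-boundaries, whose measure vanishes by the Minkowski content assumption on $\partial A_0$. If this step is delicate, I will instead also include exterior discontinuity times of the monotone function $t\mapsto|G^{(t)}\cap B_R|$, for $R\in\N$, in the exceptional set; these are again countably many.

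Inside $B_1$, split into $t_k\downarrow t$ and $t_k\uparrow t$. For $t_k>t$ we have $E^{(t)}\cap B_1\subseteq E^{(t_k)}\cap B_1$, so
\[
|(E^{(t_k)}\triangle E^{(t)})\cap B_1|=m(t_k)-m(t)\to 0
\]
by continuity of $m$. The case $t_k<t$ is symmetric. A general sequence splits into these two monotone subsequences, so $E^{(t_k)}\to E^{(t)}$ in $L^1(B_1)$. Combined with the exterior convergence, this gives local $L^1(\R^n)$ convergence, so $t\in\mathcal T$.

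The main obstacle is that $m$ depends on the choice of minimizer at the times in $\cI$, and the limit along $t_k$ must be identified with the unique minimizer at $t$. Both are handled by excluding $\cI$, which is countable by Theorem~\ref{gene-uni}, and by nestedness. As a consistency check, compactness (the $W^{s,1}$ bound as in the proof of Lemma~\ref{lem:variable-center-blowup}) together with stability of minimizers~\cite[Theorem~3.3]{MR2675483} shows that every subsequential limit is a minimizer with datum $G^{(t)}$. By uniqueness at $t$, this limit equals $E^{(t)}$, which confirms the argument without needing continuity of $m$ on the interior beyond monotonicity.
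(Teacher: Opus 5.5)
Your proof is correct and is essentially the paper's argument: a monotone scalar function of $t$ (a measure), its countably many jumps, and the nesting \eqref{nested-maximal-branch}, which turns $|(E^{(t_k)}\triangle E^{(t)})\cap B|$ into a difference of measures. The paper avoids splitting into interior and exterior by using $\varphi_m(t)=|E^{(t)}\cap B_m|$ for every $m\in\N$; this is monotone because the nesting holds in all of $\R^n$, exterior included. So your fallback is exactly what the exterior needs, and the Minkowski-content route is unnecessary (that hypothesis is not assumed at this stage). Excluding $\cI$, and the compactness-plus-uniqueness check, are also superfluous, since the nesting holds for any fixed choice of minimizers.
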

\begin{proof} For all $m\in\N$, we define $$\mathcal{T}_m:= 
\big\{t\in(0,1) \mbox{ s.t. $
E^{(t_k)}\to E^{(t)}$ in $L^1(B_m)$
whenever~$ t_k \to t$}\big\} $$
and~$\cD_m:=(0,1)\setminus\mathcal{T}_m$.
Since the countable union of countable sets is countable,
it suffices to show that each~$\cD_m$ is countable.

For this, for $t\in(0,1)$ we define $\varphi_m(t):=|E^{(t)}\cap B_m|$ and let~${\mathcal{E}}_m$ be the set of
discontinuity points for~$\varphi_m$. 

By~\eqref{nested-maximal-branch}, we know that~$E^{(t)}$ is an increasing family of sets with respect to~$t$ with respect to the notion of set inclusion. Accordingly, 
the function $\varphi_m$ is monotone.
Since monotone increasing functions have countably many discontinuities, we gather that~${\mathcal{E}}_m$ is countable.

Then, the desired result would be established if we knew that
\begin{equation}\label{nmon0MnaT}
\cD_m\subseteq{\mathcal{E}}_m.
\end{equation}
We now check this. Let $t\in\cD_m$. Then, there exist $c>0$ and a sequence $t_k$ converging to $t$ such that $|(E^{(t_k)}\triangle E^{(t)})\cap B_m|\ge c$
for all~$k\in\N$ (here, the notation ``$\triangle$'' stands for symmetric difference of sets). Hence,
\begin{eqnarray*} c&\le& |(E^{(t_k)}\setminus E^{(t)})\cap B_m|+|(E^{(t)}\setminus E^{(t_k)})\cap B_m|\\&=&\begin{cases}
|(E^{(t_k)}\setminus E^{(t)})\cap B_m|&{\mbox{ if }}t_k>t,\\
|(E^{(t)}\setminus E^{(t_k)})\cap B_m|&{\mbox{ if }}t_k<t,
\end{cases}\\&=&\begin{cases}
\varphi_m(t_k)-\varphi_m(t)&{\mbox{ if }}t_k>t,\\
\varphi_m(t)-\varphi_m(t_k)&{\mbox{ if }}t_k<t,
\end{cases}\\&=&|\varphi_m(t_k)-\varphi_m(t)|.
\end{eqnarray*}
The proof of \eqref{nmon0MnaT} is thereby complete.
\end{proof}

Now we let 
$$\cS':=\big\{ {\mbox{$(x,t)\in\cS$ s.t. $t\not\in(\cI\cup\cD)$}}\big\}$$
and consider the projections~$\pi_x(x,t):=x$ and~$\pi_t(x,t):=t$.

It is useful to write the set of singular points as a graph:
 
\begin{lemma}\label{SIGRA}
For each $x\in\pi_x(\cS)$ there exists a unique~$\tau(x)$ such that~$(x,\tau(x))\in\cS$. Moreover, the function $\tau:\pi_x(\cS)\to(0,1)$ is locally Lipschitz.
\end{lemma}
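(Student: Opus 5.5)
The plan is to derive everything from the quantitative separation in Corollary~\ref{cor:space-time-separation}. Uniqueness and the Lipschitz bound are two faces of the same estimate.

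\emph{Uniqueness of $\tau(x)$.} Fix $x\in\pi_x(\cS)$, and suppose that $(x,\tau_1),(x,\tau_2)\in\cS$ with $\tau_1<\tau_2$. Then $x\in{\rm Sing}(\partial E^{(\tau_i)})\subseteq\partial E^{(\tau_i)}$ for $i=1,2$. Since $x\in B_1$, this gives
\[
x\in\partial E^{(\tau_1)}\cap\partial E^{(\tau_2)}\cap B_1,
\]
which contradicts \eqref{eq:space-time-separation.0}. Hence $\tau(x)$ is well defined.

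The only subtlety is that $E^{(t)}$ was chosen as \emph{some} minimizer for each $t$, compatible with \eqref{nested-maximal-branch}. The set $\cS$ is defined with respect to this fixed choice, so the corollary applies directly. If one instead wants $\cS$ to record singular points of all minimizers, the same argument still works for distinct times. This is because Lemma~\ref{lem:linear-retraction} only used that $E^{(\tau)}$ and $E^{(t)}$ are minimizers with the respective data, via Corollary~\ref{Cor1p}.

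\emph{Local Lipschitz bound.} Fix $\eta\in(0,1)$ and take $x,y\in\pi_x(\cS)\cap B_{1-\eta}$. Without loss of generality $\tau(x)\le\tau(y)$; if the two times are equal there is nothing to prove. Otherwise set $\tau:=\tau(x)<t:=\tau(y)$. We have $x\in\partial E^{(\tau)}\cap B_{1-\eta}$ and $y\in\partial E^{(t)}$, so \eqref{eq:space-time-separation} gives
\[
|\tau(x)-\tau(y)|\le \frac{2}{\eta}|x-y|.
\]
If the roles are reversed, the same estimate holds by symmetry, since both points lie in $B_{1-\eta}$. Therefore $\tau$ is $\frac2\eta$-Lipschitz on $\pi_x(\cS)\cap B_{1-\eta}$ for every $\eta$, and so it is locally Lipschitz on $\pi_x(\cS)$.

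\emph{Main obstacle.} The one point to check is the applicability of Lemma~\ref{lem:linear-retraction}, which was stated for $0\le\tau<t<1$. Our times lie in $(0,1)$, so this is fine. The lemma also relied on the strict monotonicity of $G^{(t)}$, which is assumed near $0$. If that monotonicity were only available on some interval $[0,t_0)$, I would restrict $\cS$ to that range of times. No further compactness or blow-up argument is needed for this lemma; the exclusion of $\cI\cup\cD$ only becomes relevant later, when passing to $\cS'$.
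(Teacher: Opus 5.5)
Your proof is correct and follows the paper's route: uniqueness of $\tau(x)$ comes from the disjointness \eqref{eq:space-time-separation.0}, and the local Lipschitz bound comes from the quantitative separation \eqref{eq:space-time-separation}. Your version is slightly cleaner, since it gives the uniform constant $2/\eta$ on each $B_{1-\eta}$ and orders the two times explicitly, rather than choosing $\eta$ in terms of $x$ and $\rho$ as the paper does.
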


\begin{proof} Suppose, for the sake of contradiction, that~$(x,t_1)$, $(x,t_2)\in\cS$, for some~$t_1\ne t_2\in(0,1)$. Then, 
\begin{equation}\label{v567BUAJSK9}
x\in {\rm Sing}(\partial E^{(t_1)})\cap{\rm Sing}(\partial E^{(t_2)})\cap B_1\subseteq
\partial E^{(t_1)}\cap \partial E^{(t_2)}\cap B_1.\end{equation}

However, by~\eqref{eq:space-time-separation.0} in Corollary~\ref{cor:space-time-separation}, we have that~$\partial E^{(t_1)}\cap\partial E^{(t_2)}\cap B_1=\varnothing$, which is in contradiction with~\eqref{v567BUAJSK9}. 

To see $\tau$ is Lipschitz, let~$x\in\pi_x(\cS)\subseteq B_1$ and~$\rho\in(0,1-|x|)$. Let also~$y\in \pi_x(\cS)\cap B_\rho(x)\subseteq B_{\rho+|x|}$. We set~$\eta:=\min\{1-|x|,\,1-\rho-|x|\}$ and we remark that~$x$, $y\in B_{1-\eta}$. Thus, by~\eqref{eq:space-time-separation} in Corollary~\ref{cor:space-time-separation}, we see that~$|x-y|\ge\frac\eta2|\tau(x)-\tau(y)|$.
\end{proof}

Define now
\begin{equation}
    \label{eq:fdef}
f(x):=\omega_s(E^{(\tau(x))},x).
\end{equation}
Let us also denote by $\cS^*$, 
\begin{equation}\label{destar}
\cS^* := \big\{(x, t) \in \cS' : \exists (x_k, t_k) \in \cS'\setminus\{(x, t)\}\ \text{with}\ (x_k, t_k)\to (x, t)\ \text{and}\ f(x_k)\to f(x) \big\}, 
\end{equation}
that is, the collection of points~$(x,\tau(x))\in \cS'$ for which
there exists a sequence~$x_k \in\pi_x(\cS')\setminus\{x\}$ such that~$x_k \to x$ and~$f(x_k) \to f(x)$
as~$k\to+\infty$. By \cite[Lemma 2.28]{2023arXiv230813209F} (which corresponds to~\cite[Lemma 7.1]{MR4179834}), we have
that~
\begin{equation}
    \label{eq:pitS's*}
    \text{$\pi_t(\cS'\setminus\cS^*)$ is at most countable.}
\end{equation}

The work done so far allows us to obtain the following result, which is the analogue of \cite[Proposition~4.6]{2023arXiv230813209F}
for nonlocal minimal surfaces (recall that~$n_s^*$ was introduced in~\eqref{DIMECR}):

\begin{proposition}\label{prop:critical-generic-regularity}
Let $n=n_s^*+1$.  Then
\[
\cS^*=\varnothing.
\]
\end{proposition}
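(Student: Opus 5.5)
Argue by contradiction: take $(x_0,t_0)\in\cS^*$ and a sequence $(x_k,t_k)\in\cS'\setminus\{(x_0,t_0)\}$ with $(x_k,t_k)\to(x_0,t_0)$ and $f(x_k)\to f(x_0)$. By Lemma~\ref{SIGRA} we have $t_k=\tau(x_k)$ and $x_k\neq x_0$, because $\tau$ is single valued. Since $t_0\notin\cD$, we get $E^{(t_k)}\to E^{(t_0)}$ locally in $L^1(\R^n)$. The hypothesis $f(x_k)\to f(x_0)$ is exactly \eqref{omega-convergence}. So Lemma~\ref{lem:variable-center-blowup} applies. Setting $r_k:=|x_k-x_0|$, it gives, up to a subsequence,
\[
F_k:=\frac{E^{(t_k)}-x_k}{r_k}\to C ,
\]
where $C$ is an $s$-minimal cone with $0\in{\rm Sing}(\partial C)$. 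Passing to a further subsequence, let $e_k:=(x_0-x_k)/r_k\to e\in\mathbb S^{n-1}$.

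The heart of the argument is to show that $C$ is monotone in the direction $\pm e$, so that Lemma~\ref{lem:monotone-cones} applies. Suppose first that $t_k<t_0$ along a subsequence. Then $E^{(t_k)}\subseteq E^{(t_0)}$, and $x_0\in\partial E^{(t_0)}$. Lemma~\ref{lem:linear-retraction} gives the quantitative inclusion $B_{\frac\eta2(t_0-t_k)}(x_k)\subseteq E^{(t_0)}$. For the blow-up I only need the soft inclusion: rescaling $E^{(t_k)}\subseteq E^{(t_0)}$ around $x_k$ gives
\[
F_k\subseteq \frac{E^{(t_0)}-x_0}{r_k}+e_k .
\]
The sets $(E^{(t_0)}-x_0)/r_k$ converge, along a further subsequence, to a blow-up of $E^{(t_0)}$ at $x_0$. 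By the monotonicity formula this blow-up is a cone $C_0$ with density $\omega_s(E^{(t_0)},x_0)=\omega_s(C,0)$. Hence $C\subseteq C_0+e$.

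I then have to identify $C_0$ with $C$, and this is the step I expect to be the main obstacle. My first attempt: both are cones, $C\subseteq C_0+e$, and translating the vertex of $C_0$ to $e$ gives Lemma~\ref{lem:homogeneous-inclusinos}-type inclusions. Applying (P1)–(P4) to $C$ at the point $-e$, together with the equality of densities at the vertices, should force $C_0+e$ and $C$ to share boundary structure. The strict maximum principle of~\cite{2023arXiv230801697D}, applied to $C\subseteq C_0+e$ (the boundaries touch, since $0\in\partial C$ and density considerations put $0$ on $\partial(C_0+e)$), should then give $C=C_0+e$. Since $C$ is a cone, $C-e=C_0$ is also a cone, and by the scaling argument of Lemma~\ref{lem:homogeneous-inclusinos} this yields $C-\rho e\subseteq C$, that is, $C-e\subseteq C$ up to sign conventions. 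The case $t_k>t_0$ is symmetric and gives $C+e\subseteq C$; replacing $e$ by $-e$ reduces it to \eqref{monotone-cone-inclusion}.

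With the monotonicity in hand, Lemma~\ref{lem:monotone-cones} says that $C$ is either a halfspace or $e$-invariant. A halfspace is impossible, because $0\in{\rm Sing}(\partial C)$. So $C=C'\times\R$, where $C'$ is an $s$-minimal cone in $\R^{n-1}=\R^{n_s^*}$ (dimension reduction for $s$-minimal sets, \cite{MR2675483}). By the definition \eqref{DIMECR}, $C'$ is a halfspace. Then $C$ is a halfspace too, contradicting again $0\in{\rm Sing}(\partial C)$. Therefore $\cS^*=\varnothing$.
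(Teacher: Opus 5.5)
Your overall strategy matches the paper: blow up at the moving singular points $x_k$, compare with the blow-up $C_0$ of $E^{(t_0)}$ at $x_0$ using the ordering of the family, get a monotone singular cone, then use Lemma~\ref{lem:monotone-cones} and the definition of $n_s^*$. The inclusion $C\subseteq C_0+e$ is correctly derived. The gap is in the step you flagged yourself, the identification. You apply the strict maximum principle to $C\subseteq C_0+e$ at the origin and claim that ``density considerations put $0$ on $\partial(C_0+e)$'', that is, $-e\in\partial C_0$.

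This is unsupported and false in general. The equality of densities at the two vertices says nothing about where $-e$ lies relative to $\partial C_0$. Lemma~\ref{lem:linear-retraction} places $x_k$ inside $E^{(t_0)}$ at distance at least $\frac\eta2(t_0-t_k)$ from $\partial E^{(t_0)}$. If $(t_0-t_k)/r_k$ stays bounded away from zero, which the Lipschitz bound on $\tau$ allows, then after rescaling a fixed ball around $-e$ lies in $C_0$, so $0$ is an interior point of $C_0+e$. In that regime there is no touching point, and $C=C_0+e$ cannot be reached this way. The vague appeal to (P1)--(P4) at $-e$ does not fill this.

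The missing idea is to remove the translation before invoking the strict maximum principle. Lemma~\ref{lem:homogeneous-inclusinos}-(2) does exactly this: from $C-e\subseteq C_0$, with both sets cones and $C_0$ closed, you get $C\subseteq C_0$. Both cones have $0$ on their boundary: $0\in{\rm Sing}(\partial C)$, and $0\in\partial C_0$ because $x_0\in\partial E^{(t_0)}$ and density estimates pass to the blow-up. So they touch at the common vertex, and the strict maximum principle gives $C=C_0$. Inserting this into $C\subseteq C_0+e$ gives $C-e\subseteq C$, and Lemma~\ref{lem:monotone-cones} finishes the argument. This is the paper's argument, written there as $D-e\subseteq C\Rightarrow D\subseteq C\Rightarrow D=C$.

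You also never treat $t_k=t_0$. This is harmless, since the non-strict inclusion $E^{(t_k)}\subseteq E^{(t_0)}$ for $t_k\le t_0$ covers that case by the same reasoning.
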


\begin{proof}
Suppose, by contradiction, that~$\cS^*$ is not empty. Let~$(x_0,t_0)\in\cS^*$. In particular, by Lemma~\ref{SIGRA}, we have~$t_0=\tau(x_0)$. 

Then,~$x_0\in{\rm Sing}(\partial E^{(t_0)})$ and, by the definition of~$\cS^*$
in~\eqref{destar}, there exist points~$x_k\in\pi_x(\cS')\setminus\{x_0\}$ such that~$x_k\to x_0$ as $k\to+\infty$ and
\begin{equation}\label{den-conv-critical}
\omega_s(E^{(\tau(x_k))},x_k)\to \omega_s(E^{(t_0)},x_0).
\end{equation}
By Lemma~\ref{SIGRA}, we write~$t_k:=\tau(x_k)$,
\[
r_k:=|x_k-x_0|,
\qquad{\mbox{and}}\qquad
 e_k:=\frac{x_k-x_0}{r_k}.
\]
Up to a subsequence, we may suppose that~$e_k\to e\in\mathbb S^{n-1}$ and that one of the following three alternatives holds for every~$k$: either~$t_k=t_0$, or~$t_k>t_0$, or~$t_k<t_0$.

Since~$\tau$ is locally Lipschitz (recall again Lemma~\ref{SIGRA}), we have that~$t_k\to t_0$. Also, since~$(x_0,t_0)\in\cS'$, we have that~$t_0\notin\cD$, thus~$E^{(t_k)}\to E^{(t_0)}$ locally in~$L^1(\R^n)$. 

Therefore, by Lemma~\ref{lem:variable-center-blowup}, up to a subsequence,
\[
C_k:=\frac{E^{(t_k)}-x_k}{r_k}\to C \qquad\text{locally in} \ L^1(\R^n),
\]
where~$C$ is an~$s$-minimal cone and~$0\in{\rm Sing}(\partial C)$.

Similarly,  up to a further subsequence we also have
\[
D_k:=\frac{E^{(t_0)}-x_0}{r_k}\to D
\qquad\text{locally in }L^1(\R^n),
\]
where~$D$ is an~$s$-minimal cone. Moreover,~$0\in{\rm Sing}(\partial D)$, otherwise the improvement of flatness used in the proof of Lemma~\ref{lem:variable-center-blowup} would give that~$x_0$ is a regular point of~$\partial E^{(t_0)}$.

In order to get a contradiction, the following observation will be used below. Let~$K$ be an~$s$-minimal cone in~$\R^n$, singular at the origin, and suppose that 
\begin{equation}\label{INCLUTI}
{\mbox{either~$K-e\subseteq K$ or~$K+e\subseteq K$.}}\end{equation} Then, by Lemma~\ref{lem:monotone-cones}, applied with~$e$ or with~$-e$, the cone~$K$ is invariant in the direction~$e$ (since the origin is singular, it cannot be a halfspace). Hence, writing~$K=\R e\times K'$ with~$K'\subset e^\perp\simeq\R^{n-1}$, the set~$K'$ is an~$s$-minimal cone in~$\R^{n-1}$ (see e.g. the dimensional reduction in~\cite{MR2675483})
that is not a halfspace. Since~$n-1=n_s^*$, this contradicts the definition of~$n_s^*$ in~\eqref{DIMECR}.

It remains to show that one of the inclusions in~\eqref{INCLUTI} necessarily occurs. To this end, we distinguish the three alternatives for~$t_k$.

If~$t_k=t_0$, then
\[
C_k=D_k-e_k.
\]
Passing to the limit, we obtain~$C=D-e$. In particular,~$D-e\subseteq C$. Lemma~\ref{lem:homogeneous-inclusinos}-(2) gives~$D\subseteq C=D-e$, and therefore~$D+e\subseteq D$; impossible by the observation after~\eqref{INCLUTI}.

Assume next that~$t_k>t_0$. By~\eqref{nested-maximal-branch}, we have~$E^{(t_0)}\subseteq E^{(t_k)}$. Hence,
\[
D_k-e_k\subseteq C_k.
\]
Letting~$k\to+\infty$, we find that~$D-e\subseteq C$. Lemma~\ref{lem:homogeneous-inclusinos}-(2) then yields~$D\subseteq C$, and since~$0\in\partial D\cap\partial C$, the strict maximum principle in~\cite{2023arXiv230801697D} gives $D=C$. Consequently,~$D-e\subseteq D$, which is again impossible by the observation above. 

The case $t_k < t_0$ is analogous, and so all three alternatives have led to a contradiction. Hence $\cS^*=\varnothing$, as desired.
\end{proof}

In particular, since we know 
\[
\pi_t(\cS) = \cI\cup \cD \cup \pi_t(\cS'\setminus \cS^*) \cup \pi_t(\cS^*), 
\]
combining Theorem~\ref{gene-uni}, Lemma~\ref{lem:Dcountable}, \eqref{eq:pitS's*}, and Proposition~\ref{prop:critical-generic-regularity}, we have just shown that the set of times $t$ for which there is singular set in dimension $n = n_s^*+1$ is countable.  We recall that the dimensional setting~$n=n_s^*+1$ also played a role
in the Bernstein Theorem for nonlocal minimal surfaces obtained in~\cite{MR3680376}.

In higher dimensions, we expect to reduce the size of the singular set generically as well.  

Specifically, setting
\[
m_s:=n-n_s^*-1,
\]
we have the following result:

\begin{lemma}
\label{lem:xi_j_halfspace}
Let $m_s:=n-n_s^*-1$ and $C$ be an $s$-minimal cone in $\R^n$.  Suppose that there are $m_s+1$ linearly independent vectors $\xi_1,\dots,\xi_{m_s+1} \in \R^n$ such that
\[
C-\xi_j\subseteq C \qquad\text{for every }j=1,\dots,m_s+1.
\]
Then $C$ is a halfspace.
\end{lemma}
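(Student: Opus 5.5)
We argue by contradiction: assume $C$ is not a halfspace (and $C\neq\varnothing,\R^n$; the cases $C=\varnothing$ or $C=\R^n$ are trivially invariant and are excluded by the convention that halfspaces are the only admissible trivial cones, or are treated directly). We then iterate the observation used in the proof of Proposition~\ref{prop:critical-generic-regularity}, peeling off one direction of invariance at a time.

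\textbf{Step 1 (first direction).} Apply Lemma~\ref{lem:monotone-cones} to $C$ with the unit vector $e=\xi_1/|\xi_1|$. By Lemma~\ref{lem:homogeneous-inclusinos}-(1), the inclusion $C-\xi_1\subseteq C$ gives $C-e\subseteq C$. Since $C$ is not a halfspace, Lemma~\ref{lem:monotone-cones} yields $C+\R\xi_1=C$.

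\textbf{Step 2 (inductive step).} Suppose we have shown that $C$ is invariant under the span $V_j:=\mathrm{span}\{\xi_1,\dots,\xi_j\}$. For the next vector, decompose
\[
\xi_{j+1}=v+\zeta,\qquad v\in V_j,\quad \zeta\in V_j^\perp.
\]
Linear independence guarantees $\zeta\neq0$. By invariance under $V_j$, we have $C-\zeta=C-\xi_{j+1}+v=C-\xi_{j+1}\subseteq C$. Applying Lemma~\ref{lem:monotone-cones} with $e=\zeta/|\zeta|$ (rescaled via Lemma~\ref{lem:homogeneous-inclusinos}-(1)), and using again that $C$ is not a halfspace, we get $C+\R\zeta=C$. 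Hence $C$ is invariant under $V_{j+1}=V_j+\R\zeta$.

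\textbf{Step 3 (conclusion).} After $m_s+1$ steps, $C$ is invariant under a subspace $V$ of dimension $m_s+1$, so that
\[
C=V\times C',\qquad C'\subset V^\perp\simeq\R^{n-m_s-1}=\R^{n_s^*}.
\]
By the dimensional reduction in~\cite{MR2675483}, applied iteratively one direction at a time (as in the proof of Proposition~\ref{prop:critical-generic-regularity}), $C'$ is an $s$-minimal cone in $\R^{n_s^*}$. By~\eqref{DIMECR}, $C'$ is then a halfspace, and so $C$ is a halfspace, which is a contradiction.

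The main point requiring care is Step 2, namely that the translation $C-\zeta$ inherits the inclusion from $C-\xi_{j+1}$ exactly, without any closure issues. This works because invariance under $V_j$ is an exact set equality up to null sets. The other delicate ingredient is the use of dimensional reduction for cylindrical cones; we take this from~\cite{MR2675483}.
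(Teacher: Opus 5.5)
Your proof is correct and follows the paper's argument: rescale via Lemma~\ref{lem:homogeneous-inclusinos}-(1), get invariance from Lemma~\ref{lem:monotone-cones}, split $C=V\times C'$, and conclude from the definition of $n_s^*$. The only difference is the orthogonal decomposition in your Step~2, which is harmless but unnecessary: the hypothesis $C-\xi_{j+1}\subseteq C$ already lets you apply Lemma~\ref{lem:monotone-cones} to $\xi_{j+1}/|\xi_{j+1}|$ directly, and invariance under each line $\R\xi_j$ gives invariance under their span.
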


\begin{proof}
 By Lemma~\ref{lem:homogeneous-inclusinos}-(1), we have that~$C-t\xi_j\subseteq C$ for all~$t>0$ and all~$j=1,\dots,m_s+1$, and hence, by Lemma~\ref{lem:monotone-cones} we know that~$C$ is invariant in each of the directions~$\xi_j$ (or $C$ is a halfspace directly).

Let~$V:={\rm span}\{\xi_1,\dots,\xi_{m_s+1}\}$ with $\dim V=m_s+1$. The invariance just proved gives~$C=V\times C'$, with~$C'\subset V^\perp\simeq\R^{n-m_s-1}=\R^{n_s^*}$ where~$C'$ is an~$s$-minimal cone. By the definition of~$n_s^*$ in~\eqref{DIMECR}, we have that~$C'$ is a halfspace, and so is~$C$.
\end{proof}

Let us also recall the following abstract GMT results from \cite{MR4179834}:
\begin{lemma}[\protect{\cite[Proposition 7.3]{MR4179834}}]
\label{LemReifenberg}
Suppose that $E\subset\R^n$ and $f:E\to\R$ satisfy the following:

For each $\eps>0$ and $x\in E$, there exists $\rho=\rho(x,\eps)>0$ such that for each $r\in(0,\rho)$, we can find an $m$-dimensional subspace $\Pi_{x,r}$, passing through $x$ and satisfying
$$
E\cap B_r(x)\cap \{y:~f(x)-\rho<f(y)<f(x)+\rho\}\subset\{y:~ \mathrm{dist}(y,\Pi_{x,r})\le \eps r\}.
$$

Then the Hausdorff dimension of $E$ can be  bounded from above as 
$$
\dim_{\mathcal H} (E)\le m.
$$
\end{lemma}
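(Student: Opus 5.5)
The plan is a countable decomposition of $E$ followed by an iterated covering argument of Reifenberg type, which gives $\mathcal H^{m+\delta}(E)=0$ for every $\delta>0$.

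\emph{Step 1: decomposition.} Fix $\delta>0$, and let $\eps\in(0,1/4)$ be a small number depending only on $n$, $m$ and $\delta$, to be chosen in Step 3. For $j\in\N$ and $\ell\in\mathbb Z$, let $E_{j,\ell}$ be the set of points $x\in E$ such that $\rho(x,\eps)\ge 1/j$ and $f(x)\in[\ell/(2j),(\ell+1)/(2j))$. These sets cover $E$, and there are countably many of them. Hausdorff dimension is stable under countable unions, so it suffices to show $\dim_{\mathcal H}(E_{j,\ell}\cap B_1(z))\le m+\delta$ for each $j,\ell$ and each center $z$ in a countable dense set.

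For $x,y\in E_{j,\ell}$ we have $|f(x)-f(y)|<1/(2j)<1/j\le\rho(x,\eps)$. The hypothesis then gives, for every $x\in E_{j,\ell}$ and every $r\in(0,1/j)$,
\[
E_{j,\ell}\cap B_r(x)\subset\{y:\ \mathrm{dist}(y,\Pi_{x,r})\le\eps r\}.
\]
In words, $E_{j,\ell}$ is uniformly $\eps$-flat at all scales below $1/j$, with no dependence on $f$ any more. This step is where the restriction to the level-strip of $f$ is used, and it is the only place $f$ enters.

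\emph{Step 2: one-step covering.} Take $x\in E_{j,\ell}$ and $r<1/j$. The set $E_{j,\ell}\cap B_r(x)$ lies in the $\eps r$-neighbourhood of an $m$-plane intersected with $B_r(x)$. That region can be covered by at most $N\le C(n)\eps^{-m}$ balls of radius $2\eps r$. Discarding the balls that miss $E_{j,\ell}$ and recentering at points of $E_{j,\ell}$ (doubling the radius), we cover $E_{j,\ell}\cap B_r(x)$ by at most $C(n)\eps^{-m}$ balls of radius $4\eps r$, all centered in $E_{j,\ell}$. Since $4\eps r<r<1/j$, Step 1 applies again at each new center.

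\emph{Step 3: iteration.} Start from a cover of $E_{j,\ell}\cap B_1(z)$ by finitely many balls of radius $r_0<1/j$ centered in $E_{j,\ell}$. After $k$ iterations of Step 2 we get at most $C_0\,(C(n)\eps^{-m})^k$ balls of radius $(4\eps)^kr_0$. Hence
\[
\mathcal H^{m+\delta}_{\infty}\big(E_{j,\ell}\cap B_1(z)\big)\le C_0\,r_0^{m+\delta}\big(C(n)4^{m+\delta}\eps^{\delta}\big)^k .
\]
Choosing $\eps$ so that $C(n)4^{m+\delta}\eps^\delta<1$ and letting $k\to\infty$ gives $\mathcal H^{m+\delta}=0$. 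Since $\delta>0$ is arbitrary, $\dim_{\mathcal H}(E)\le m$.

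The main subtlety is the choice of $\eps$. It must be fixed \emph{before} the decomposition, as a function of $\delta$ alone, because the gain $\eps^{\delta}$ per step has to beat the combinatorial growth $C(n)\eps^{-m}$ of the number of balls with a constant independent of $\eps$. The rest is bookkeeping: the centers must be kept in $E_{j,\ell}$ so that the flatness hypothesis can be reapplied at every scale.
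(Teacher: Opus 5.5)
Your proof is correct and follows the standard route for this statement. The paper does not reprove it but imports it from Figalli--Ros-Oton--Serra, and their proof has the same structure as yours:
\begin{itemize}
\item decompose $E$ into countably many pieces on which $\rho(\cdot,\eps)\ge 1/j$ and $f$ oscillates by less than $1/j$, which removes $f$ and makes the $\eps$-flatness uniform below scale $1/j$;
\item iterate the covering by $C\eps^{-m}$ balls of radius $C\eps r$, recentred on the piece, until the gain $\eps^{\delta}$ beats the combinatorial constant.
\end{itemize}
One small point should be made explicit: $\rho(x,\eps)$ should be read as any admissible value. This is harmless, because the hypothesis still holds when $\rho$ is decreased.
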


From which it follows: 
 
\begin{proposition}
\label{prop:dimred}
Assume that $n\ge n_s^*+2$.  Then
\[
\dim_{\mathcal H}\big(\pi_x(\cS)\big)\le n-n_s^*-1.
\]
\end{proposition}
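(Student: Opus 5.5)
We plan to reduce the statement to the abstract criterion in Lemma~\ref{LemReifenberg}, applied with $m:=m_s=n-n_s^*-1$, to the set $\pi_x(\cS)$ (or rather to $\pi_x(\cS')$, handling the remainder separately) and to the function $f$ defined in~\eqref{eq:fdef}. The exceptional part is harmless: $\pi_x(\cS)\setminus\pi_x(\cS')$ is contained in $\bigcup_{t\in\cI\cup\cD}{\rm Sing}(\partial E^{(t)})\cap B_1$, a countable union of singular sets, each of Hausdorff dimension at most $n-n_s^*-1$ by the classical dimension reduction of~\cite{MR2675483}. Since countable unions do not increase Hausdorff dimension, it suffices to prove $\dim_{\mathcal H}\pi_x(\cS')\le m_s$.

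For $\pi_x(\cS')$ we argue by contradiction with the flatness condition of Lemma~\ref{LemReifenberg}. Suppose that for some $x_0\in\pi_x(\cS')$ and $\eps>0$ the condition fails. Then we obtain radii $r_k\searrow0$ with the following property: the points of $\pi_x(\cS')\cap B_{r_k}(x_0)$ whose $f$-value lies within $1/k$ of $f(x_0)$ are not contained in the $\eps r_k$-neighbourhood of any $m_s$-plane through $x_0$. A standard linear-algebra step then produces $m_s+1$ such points $x_k^1,\dots,x_k^{m_s+1}$ for which the rescaled vectors $(x_k^j-x_0)/r_k$ remain uniformly linearly independent (each lies at distance $\ge\eps$ from the span of the previous ones and has norm at most $1$). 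Passing to a subsequence, these vectors converge to linearly independent $\xi_1,\dots,\xi_{m_s+1}\in\overline{B_1}$, all nonzero.

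Next we blow up, following the proof of Proposition~\ref{prop:critical-generic-regularity}. Because $t_0=\tau(x_0)\notin\cD$ and $\tau$ is locally Lipschitz, we have $E^{(\tau(x_k^j))}\to E^{(t_0)}$ locally in $L^1$. The rescalings $D_k=(E^{(t_0)}-x_0)/r_k$ converge to a singular $s$-minimal cone $D$. For each $j$, the sets $(E^{(\tau(x_k^j))}-x_0)/r_k$ satisfy the density convergence $f(x_k^j)\to f(x_0)$. Applying Lemma~\ref{lem:variable-center-blowup} with the scale $r_k$ in place of $|x_k^j-x_0|$ (the proof is unchanged, since $|x_k^j-x_0|$ and $r_k$ are comparable), we get that the rescalings centered at $x_k^j$ converge to a cone $C^j$, and that this limit equals $C^j_0-\xi_j$, where $C^j_0$ is the limit of the rescalings centered at $x_0$. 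We then compare with $D$ exactly as in the three cases of Proposition~\ref{prop:critical-generic-regularity}, using the nesting~\eqref{nested-maximal-branch}, Lemma~\ref{lem:homogeneous-inclusinos}-(2) and the strict maximum principle of~\cite{2023arXiv230801697D}. This yields $C^j_0=D$ together with $D-\xi_j\subseteq D$ or $D+\xi_j\subseteq D$ for each $j$.

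Replacing $\xi_j$ by $-\xi_j$ where necessary preserves linear independence, so Lemma~\ref{lem:xi_j_halfspace} forces $D$ to be a halfspace. This contradicts $0\in{\rm Sing}(\partial D)$, so the hypothesis of Lemma~\ref{LemReifenberg} holds with $m=m_s$, and the bound follows.

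The main obstacle is the limit identification step. The cone $C^j_0$ arises by blowing up the \emph{different} sets $E^{(\tau(x_k^j))}$, translated by $x_0$ rather than $x_k^j$, and we must show it equals $D$. The plan there is as follows. The limit at $x_k^j$ is a cone $C^j$ by the density hypothesis, and $C^j+\xi_j$ is comparable with $D$ by the nesting. Lemma~\ref{lem:homogeneous-inclusinos} upgrades the comparison to $D\subseteq C^j$ (or the reverse), and the strict maximum principle at the common boundary point $0$ gives equality. We would verify carefully that $0\in\partial D\cap\partial C^j$, which follows from the uniform density estimates.
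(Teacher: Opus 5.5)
Your proposal is correct and follows the paper's proof. The common steps are Lemma~\ref{LemReifenberg} with the density function $f$, blow-ups of $E^{(t_0)}$ at $x_0$ and of $E^{(t_k^j)}$ at $x_k^j$ at the common scale $r_k$, the three-case comparison giving $D\mp\xi_j\subseteq D$, Lemma~\ref{lem:xi_j_halfspace}, and a countable union of singular sets for the exceptional times. The only deviation is that you run the criterion on $\pi_x(\cS')$ rather than on $\pi_x(\cS^*)$. That is legitimate, because the negated hypothesis itself produces points $x_k\neq x_0$ with $f(x_k)\to f(x_0)$, and it spares you the countability of $\pi_t(\cS'\setminus\cS^*)$.

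One labelling slip: when $t_k^j\neq t_0$, the strict maximum principle yields $C^j=D$ for the blow-up centred at $x_k^j$, exactly as your last paragraph argues, and not $C^j_0=D$. The set $C^j_0=C^j+\xi_j$ is a translate of a cone, and it coincides with $D$ only a posteriori, once $D$ is known to be $\xi_j$-invariant.
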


\begin{proof}
Let~$m_s:=n-n_s^*-1$. We first prove that
\begin{equation}\label{DIMPISTAR}
\dim_{\mathcal H}\big(\pi_x(\cS^*)\big)\le m_s.
\end{equation}
For this, we check the hypothesis of~Lemma~\ref{LemReifenberg} for the set~$\pi_x(\cS^*)$ and the function~$f$ in \eqref{eq:fdef}.

Suppose, by contradiction, that this hypothesis fails at some point~$x_0\in\pi_x(\cS^*)$, and write~$t_0:=\tau(x_0)$. Then, for some~$\eps>0$, we can find~$r_k\searrow0$ and points
\[
x_k^{(1)},\dots,x_k^{(m_s+1)}\in \pi_x(\cS^*)\cap B_{r_k}(x_0)
\]
such that
\begin{equation}\label{FYDENS}
\big|f(x_k^{(j)})-f(x_0)\big|<\frac1k
\qquad{\mbox{for every }}j=1,\dots,m_s+1,
\end{equation}
and
\begin{equation}\label{FYPLANE}
\max_{j=1,\dots,m_s+1}{\rm dist}\big(x_k^{(j)},x_0+\Pi\big)\ge\eps r_k
\end{equation}
for every~$m_s$-dimensional vector subspace~$\Pi\subset\R^n$.

In view of Lemma~\ref{SIGRA},
we let~$t_k^{(j)}:=\tau(x_k^{(j)})$ and
\[
y_k^{(j)}:=\frac{x_k^{(j)}-x_0}{r_k}.
\]
By~\eqref{FYPLANE}, $y_k^{(j)}\in \overline B_1\setminus B_\eps$ and, up to a subsequence,
\[
y_k^{(j)}\to y^{(j)}\in\overline B_1\setminus B_\eps
\qquad{\mbox{for every }}j=1,\dots,m_s+1.
\]
with $y^{(1)},\dots,y^{(m_s+1)}$  linearly independent.

Since~$\tau$ is locally Lipschitz (recall again Lemma~\ref{SIGRA}), we have that~$t_k^{(j)}\to t_0$ for all~$j$. Also, since~$(x_0,t_0)\in\cS^*$, we know that~$t_0\notin\cD$, and therefore~$E^{(t_k^{(j)})}\to E^{(t_0)}$ locally in~$L^1(\R^n)$. 

From~\eqref{FYDENS} we also have
\[
\omega_s(E^{(t_k^{(j)})},x_k^{(j)})\to \omega_s(E^{(t_0)},x_0).
\]
Using Lemma~\ref{lem:variable-center-blowup}, we may assume, up to a further subsequence, that
\[
D_k:=\frac{E^{(t_0)}-x_0}{r_k}\to D
\qquad\text{and}\qquad
C_k^{(j)}:=\frac{E^{(t_k^{(j)})}-x_k^{(j)}}{r_k}\to C^{(j)}
\]
locally in~$L^1(\R^n)$, for every~$j=1,\dots,m_s+1$, where~$D$ and $C^{(j)}$'s are~$s$-minimal cones, and~$0\in {\rm Sing}(\partial D)\cap {\rm Sing}(\partial C^{(j)})$ (otherwise, improvement of flatness would yield a contradiction).

We now claim that, for each~$j=1,\dots,m_s+1$, one of the two inclusions
\begin{equation}\label{INCLUSIONSPLUSMINUS}
D-y^{(j)}\subseteq D
\qquad\text{or}\qquad
D+y^{(j)}\subseteq D
\end{equation}
holds true. To prove this, we fix~$j$ and, taking a further subsequence, we distinguish the three alternatives~$t_k^{(j)}=t_0$, $t_k^{(j)}>t_0$, and~$t_k^{(j)}<t_0$.

If~$t_k^{(j)}=t_0$, then~$C_k^{(j)}=D_k-y_k^{(j)}$, and consequently~$C^{(j)}=D-y^{(j)}$.  Thus, Lemma~\ref{lem:homogeneous-inclusinos}-(2) gives~$D\subseteq C^{(j)}=D-y^{(j)}$. Thus~$D+y^{(j)}\subseteq D$.

Assume next that~$t_k^{(j)}>t_0$. By~\eqref{nested-maximal-branch}, we have~$E^{(t_0)}\subseteq E^{(t_k^{(j)})}$, and therefore~$D_k-y_k^{(j)}\subseteq C_k^{(j)}$. Letting~$k\to+\infty$, we get~$D-y^{(j)}\subseteq C^{(j)}$ and Lemma~\ref{lem:homogeneous-inclusinos}-(2) implies~$D\subseteq C^{(j)}$. Since~$0\in\partial D\cap\partial C^{(j)}$, the strict maximum principle
in~\cite{2023arXiv230801697D} gives that~$D=C^{(j)}$, and consequently~$D-y^{(j)}\subseteq D$. The case $t_k^{(j)} <t_0$ is analogous. 

This proves~\eqref{INCLUSIONSPLUSMINUS}. Therefore, we find~$m_s+1$ linearly independent vectors~$\xi_1,\dots,\xi_{m_s+1}$ with
\[
D-\xi_j\subseteq D
\qquad\text{for every }j=1,\dots,m_s+1.
\]
Lemma~\ref{lem:xi_j_halfspace} then implies that~$D$ is a halfspace, in contradiction with~$0\in {\rm Sing}(\partial D)$.  

 It remains to pass from~$\cS^*$ to~$\cS$. We already know that~$\cI$,
 $\cD$, and~$\pi_t(\cS'\setminus\cS^*)$ are countable. Hence,
\[
\pi_x(\cS)\subseteq \pi_x(\cS^*)
\cup\bigcup_{t\in \cI\cup\cD\cup\pi_t(\cS'\setminus\cS^*)}{\rm Sing}(\partial E^{(t)}).
\]
By a standard dimension-reduction estimate for singular sets of nonlocal minimal surfaces, based on~\cite[Theorems~10.3 and~10.4]{MR2675483} and on the definition of~$n_s^*$, each set~${\rm Sing}(\partial E^{(t)})$ in the union above has Hausdorff dimension at most~$m_s$. 

Since the union is countable, this and~\eqref{DIMPISTAR} give
\[
\dim_{\mathcal H}\big(\pi_x(\cS)\big)\le m_s=n-n_s^*-1,
\]
as desired.
\end{proof}

We now prove the following refined version of the main results, Theorems~\ref{thm:main1} and~\ref{thm:main2}: 
\begin{theorem}\label{GENER:REGULA0}
Let~$G^{(0)}\subset\R^n\setminus B_1$, and let  $\{G^{(t)}\}_{t\in[0,1]}$ be a strictly increasing family of sets contained in~$\R^n\setminus B_1$ given by \eqref{def-special-family}. 

Let $E^{(t)}$ be an $s$-minimal set in $B_1$ with exterior datum $G^{(t)}$. Then,
if~$n=n_s^*+1$, there exists a countable subset~${\mathcal{J}}$ of~$[0,1]$ such that
\begin{equation}\label{SING:10} {\rm Sing}(\partial E^{(t)})=\varnothing \qquad{\mbox{ for all }}t\in[0,1]\setminus{\mathcal{J}}.\end{equation}

Also, if~$n\ge n_s^*+2$,
\begin{equation}\label{SING:20} \dim_{\mathcal H}({\rm Sing}(\partial E^{(t)}))\le n-n_s^*-2\qquad{\mbox{ for almost all }}t\in[0,1].\end{equation}
\end{theorem}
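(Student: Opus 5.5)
For the critical dimension $n=n_s^*+1$, I will simply collect the countable exceptional sets already identified. By Lemma~\ref{SIGRA}, every singular point $(x,t)\in\cS$ satisfies one of two alternatives. Either $t\in\cI\cup\cD$, or $(x,t)\in\cS'$, in which case either $(x,t)\in\cS'\setminus\cS^*$ or $(x,t)\in\cS^*$. This gives
\[
\pi_t(\cS)\subseteq \cI\cup\cD\cup\pi_t(\cS'\setminus\cS^*)\cup\pi_t(\cS^*).
\]
These four sets are controlled as follows: $\cI$ is countable by Theorem~\ref{gene-uni}, $\cD$ is countable by Lemma~\ref{lem:Dcountable}, $\pi_t(\cS'\setminus\cS^*)$ is countable by \eqref{eq:pitS's*}, and $\cS^*=\varnothing$ by Proposition~\ref{prop:critical-generic-regularity}. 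I will set $\mathcal J:=\pi_t(\cS)\cup\{0,1\}$, since $\cS$ only involves times in $(0,1)$ and the endpoints can be thrown into the exceptional set. Then for every $t\notin\mathcal J$ we have ${\rm Sing}(\partial E^{(t)})\cap B_1=\varnothing$, which is \eqref{SING:10}. One point needs checking: at times $t\in\cI$ the statement concerns an arbitrary minimizer $E^{(t)}$. This is harmless, because those times are already excluded.

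For $n\ge n_s^*+2$, I will combine Proposition~\ref{prop:dimred} with a slicing (coarea/Eilenberg-type) inequality in the time variable. Put $m_s=n-n_s^*-1$. By Lemma~\ref{SIGRA}, $\cS$ is the graph of the locally Lipschitz function $\tau$ over $\pi_x(\cS)$, and the slice of $\cS$ at time $t$ projects exactly to ${\rm Sing}(\partial E^{(t)})\cap B_1$. I will fix $\beta>m_s$, so that by Proposition~\ref{prop:dimred} $\mathcal H^\beta(\pi_x(\cS))=0$. I will exhaust $B_1$ by balls $B_{1-\eta}$ on which $\tau$ is Lipschitz with constant $2/\eta$. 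On each piece, the Eilenberg inequality for the Lipschitz map $\tau:\pi_x(\cS)\cap B_{1-\eta}\to\R$ gives
\[
\int^*_{\R}\mathcal H^{\beta-1}\big(\tau^{-1}(t)\cap B_{1-\eta}\big)\,dt\le C_\eta\,\mathcal H^\beta\big(\pi_x(\cS)\cap B_{1-\eta}\big)=0.
\]
Hence $\mathcal H^{\beta-1}\big({\rm Sing}(\partial E^{(t)})\cap B_{1-\eta}\big)=0$ for a.e.\ $t$. I will then take $\eta=1/j$ and $\beta=m_s+1/\ell$ and intersect the resulting full-measure sets over countably many $j,\ell$. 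This yields $\dim_{\mathcal H}({\rm Sing}(\partial E^{(t)})\cap B_1)\le m_s-1=n-n_s^*-2$ for a.e.\ $t$. The countable set $\cI$, where $E^{(t)}$ may be a non-canonical choice, has measure zero and causes no trouble.

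The main obstacle is justifying the Eilenberg inequality for merely $\mathcal H^\beta$-null sets, where measurability of $t\mapsto\mathcal H^{\beta-1}(\cdot)$ may fail; this is why I use the upper integral. I will apply it in the form
\[
\int^*\mathcal H^{\beta-1}(A\cap f^{-1}(t))\,dt\le C\,{\rm Lip}(f)\,\mathcal H^\beta(A),
\]
valid for arbitrary $A$ in a metric space (Federer 2.10.25). This suffices, because a zero upper integral forces the integrand to vanish almost everywhere.
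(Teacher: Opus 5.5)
Your proposal is correct. In the critical case $n=n_s^*+1$ it coincides with the paper's argument. Two small remarks: the inclusion $\pi_t(\cS)\subseteq\cI\cup\cD\cup\pi_t(\cS'\setminus\cS^*)\cup\pi_t(\cS^*)$ comes from the definition of $\cS'$ rather than from Lemma~\ref{SIGRA}, and adding $\{0,1\}$ to $\mathcal J$ correctly handles the endpoints, which $\cS$ does not see.

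For $n\ge n_s^*+2$ you use the same two inputs as the paper, Proposition~\ref{prop:dimred} and the separation estimate of Corollary~\ref{cor:space-time-separation}, but a different slicing tool. The paper feeds them into the abstract Lemma~\ref{LemGenericReduction} with $\gamma=n-n_s^*-1$ and $\sigma=1$. There, hypothesis (2) is verified through $|\tau(x)-\tau(x_0)|\le\frac2\eta|x-x_0|\le|x-x_0|^{1-\eps}$ near $x_0$. You instead apply the classical Eilenberg inequality to the Lipschitz time function $\tau$ on each $\pi_x(\cS)\cap B_{1-\eta}$, then exhaust over countably many $\eta$ and $\beta$.

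Your route is more elementary and self-contained. Since $\mathcal H^\beta(\pi_x(\cS))=0$, the inequality you need reduces to a direct covering argument: cover by sets $S_i$ with $\sum_i(\operatorname{diam}S_i)^\beta$ small, and note that each $\tau(S_i)$ lies in an interval of length at most $\frac2\eta\operatorname{diam}S_i$. Your use of the upper integral correctly sidesteps measurability. Your argument does rely on the full two-sided Lipschitz graph structure of $\cS$, which is available here. Lemma~\ref{LemGenericReduction} needs only a one-sided, H\"older-type exclusion condition and allows exponents $\sigma>1$. That extra generality matters in the obstacle-type problems it was designed for, but plays no role here: both routes give exactly the one-dimensional gain $n-n_s^*-2$.
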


This theorem will follow as a consequence of the previous results together with the following abstract observation: 
\begin{lemma}[\protect{\cite[Corollary 7.8]{MR4179834}}]
\label{LemGenericReduction}
Suppose that for some $\sigma>0$ and $\gamma\in[\sigma,n]$, the set $E\subset\R^n\times[0,1)$ satisfies
\begin{enumerate}
\item{$\dim_{\mathcal{H}}(\pi_x(E))\le\gamma$; and }
\item{For each $(x_0,t_0)\in E$ and $\eps>0$,  there exists $\rho=\rho(x_0,t_0,\eps)>0$ such that 
$$
E\cap\{(x,t)\in B_{\rho}(x_0)\times[0,1):~ t-t_0>|x-x_0|^{\sigma-\eps}\}=\varnothing.
$$}
\end{enumerate}
Then, we have
$$
\dim_{\mathcal H}(E\cap\pi_t^{-1}(t))\le\gamma-\sigma~ \text{ for almost every } t\in[0,1).
$$
\end{lemma}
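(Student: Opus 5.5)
The plan is to prove this by a covering argument in the $x$-variable, combined with a Hölder control of the time coordinate obtained from hypothesis (2). This is essentially an Eilenberg-type (coarea-type) inequality for slices.

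\emph{Localize and make hypothesis (2) two-sided.} Fix $\eps\in(0,\sigma)$. For $j\in\N$, let $E_j$ be the set of $(x_0,t_0)\in E$ for which the conclusion of (2) holds with $\rho\ge 1/j$. Then $E=\bigcup_j E_j$, so it suffices to treat a single $E_j$. Further split $E_j$ into countably many pieces $E_{j,i}$ with $\pi_x$-diameter less than $1/j$. Take two points $(x,t),(y,s)\in E_{j,i}$. Applying (2) at $(y,s)$ gives $t-s\le|x-y|^{\sigma-\eps}$, and applying it at $(x,t)$ gives $s-t\le |x-y|^{\sigma-\eps}$. Hence
\[
|t-s|\le |x-y|^{\sigma-\eps}\qquad\text{for all }(x,t),(y,s)\in E_{j,i}.
\]
In particular, $E_{j,i}$ is the graph of a $(\sigma-\eps)$-Hölder function over $\pi_x(E_{j,i})$.

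\emph{Covering estimate.} Fix $\gamma'>\gamma$ and $\delta>0$. Since $\dim_{\mathcal H}\pi_x(E_{j,i})\le\gamma<\gamma'$, cover $\pi_x(E_{j,i})$ by balls $B_{r_l}(z_l)$ with $\sum_l r_l^{\gamma'}<\delta$ and all $r_l$ small. By the Hölder bound, the piece of $E_{j,i}$ lying over $B_{r_l}(z_l)$ has $t$-projection contained in an interval $J_l$ of length at most $(2r_l)^{\sigma-\eps}$. For each $t$, the slice $E_{j,i}\cap\pi_t^{-1}(t)$ is covered by those balls $B_{r_l}(z_l)$ with $t\in J_l$. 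Writing $\beta:=\gamma'-\sigma+\eps\ge0$, this gives
\[
\int_0^1 \mathcal H^{\beta}_{\infty}\big(E_{j,i}\cap\pi_t^{-1}(t)\big)\,dt
\le C\sum_l r_l^{\beta}\,|J_l|\le C\sum_l r_l^{\gamma'}\le C\delta,
\]
where the integral is understood as an upper integral if measurability is an issue. Letting $\delta\searrow0$, we get $\mathcal H^{\beta}_\infty$ of the slice equal to zero for a.e.\ $t$. Hence $\mathcal H^{\beta}$ of the slice vanishes, so its dimension is at most $\gamma'-\sigma+\eps$.

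\emph{Conclusion.} Take countable sequences $\eps\searrow0$ and $\gamma'\searrow\gamma$, together with the countable union over $j,i$. Since a countable union of null sets of times is null, we obtain $\dim_{\mathcal H}(E\cap\pi_t^{-1}(t))\le\gamma-\sigma$ for a.e.\ $t$.

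The main subtlety is the first step. Hypothesis (2) is only one-sided and only pointwise in $\rho$, and it has to be upgraded to a uniform two-sided Hölder bound on small pieces. Swapping the roles of the two points does this, provided $\rho$ is uniform on the piece, which is why $E$ is decomposed into the sets $E_j$ first. A secondary technical point is measurability in $t$ of the slice contents. I would avoid it by working with upper integrals and Fatou-type monotonicity, so that no measurability of the slices is needed.
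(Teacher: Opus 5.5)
Your argument is correct. The paper gives no proof of this lemma; it simply imports it from Figalli--Ros-Oton--Serra, and your route is essentially the standard proof of that cited result. You split $E$ into countably many pieces with uniform $\rho$ and $x$-diameter below $1/j$, apply (2) at both points to get $|t-s|\le|x-y|^{\sigma-\eps}$, and then use the Eilenberg-type covering bound $\int^*\mathcal H^{\gamma'-\sigma+\eps}_\infty(\text{slice}_t)\,dt\lesssim\sum_l r_l^{\gamma'}$, handling measurability and the countable unions over $j,i,\eps,\gamma'$ correctly.
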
 

With this, we can argue in the following way:
\begin{proof}[Proof of Theorem~\ref{GENER:REGULA0}]
The case $n = n_s^*+1$ now follows from Theorem~\ref{gene-uni}, Lemma~\ref{lem:Dcountable}, \eqref{eq:pitS's*}, and Proposition~\ref{prop:critical-generic-regularity}, as already mentioned. 

The case $n \ge n_s^*+2$ is a consequence of Lemma~\ref{LemGenericReduction}, by relying on Proposition~\ref{prop:dimred} and Corollary~\ref{cor:space-time-separation}. 
\end{proof}

Finally, in order to obtain the desired result
regarding $L^1_{\rm loc}$ small perturbations, we observe the following: 

\begin{lemma}\label{lem:small-exterior-perturbation0}
If $\partial A_0$ has locally finite $(n-1)$-dimensional
upper Minkowski content in $B_R$, then
\[
\big|(G^{(\delta)}\setminus G^{(0)})\cap B_R\big|
\le
O_R(\delta)
\quad\text{as }\delta\searrow0.
\]
In particular,
\[
G^{(t)}\to G^{(0)}
\quad\text{in }L^1_{\rm loc}(\mathbb R^n)
\quad\text{as }t\searrow0.
\]
\end{lemma}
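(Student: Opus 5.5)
The plan is to show that every point of $A_\delta\setminus A_0$ lies within distance $C\delta$ of $\partial A_0$ (locally), and then to bound the measure through the Minkowski content. We only need to control $(G^{(\delta)}\setminus G^{(0)})\cap B_R\subseteq (A_\delta\setminus A_0)\cap B_R$, since $A_t=B_1\cup G^{(t)}$.

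\textbf{Step 1: points of $A_\delta$ are close to $A_0$.} Let $z\in A_\delta\cap B_R$. By \eqref{def-special-family} there are $q\in[0,\delta]$, $a\in A_0$ and $b\in B_q$ with
\[
z=(1+q)a+b.
\]
Since $|z|<R$ and $q\le 1$, we have $|a|\le |z|+1\le R+1$. Hence
\[
|z-a|\le q|a|+|b|\le q(R+2)\le (R+2)\delta .
\]
So every point of $A_\delta\cap B_R$ lies within distance $(R+2)\delta$ of some point $a\in A_0\cap B_{R+1}$.

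\textbf{Step 2: points of $A_\delta\setminus A_0$ are close to $\partial A_0$.} Suppose in addition that $z\notin A_0$. The segment from $a\in A_0$ to $z\notin A_0$ must cross $\partial A_0$, so
\[
{\rm dist}(z,\partial A_0\cap B_{R+2})\le (R+2)\delta .
\]
The segment lies in $B_{R+1}$, which justifies the localization. One caveat: sets are taken up to null sets, with essential boundaries. I would handle this by fixing representatives, taking $A_0$ closed or replacing it by its points of positive density, so that topological boundary arguments apply. The leftover discrepancy has measure zero and does not affect the estimate. Consequently,
\[
(A_\delta\setminus A_0)\cap B_R\subseteq (\partial A_0)_{(R+3)\delta}\cap B_{R+2}.
\]

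\textbf{Step 3: the Minkowski content bound.} By Definition~\ref{def:minkowski_content} applied with radius $R+2$, there are $M_R<+\infty$ and $\delta_0>0$ such that
\[
|(\partial A_0)_{r}\cap B_{R+2}|\le M_R\, r\qquad\text{for all } r<\delta_0 .
\]
Taking $r=(R+3)\delta$ gives
\[
\big|(G^{(\delta)}\setminus G^{(0)})\cap B_R\big|\le M_R(R+3)\delta=O_R(\delta).
\]

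\textbf{Step 4: the $L^1_{\rm loc}$ convergence.} Since $G^{(0)}\subseteq G^{(t)}$, the symmetric difference on $B_R$ is exactly the set just estimated, so $\chi_{G^{(t)}}\to\chi_{G^{(0)}}$ in $L^1(B_R)$ for every $R$. This also gives convergence in the Gaussian-weighted sense of Definition~\ref{def:perturbations}, by splitting into $B_R$ and a tail with small Gaussian mass.

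The only delicate point is Step 2, namely the measure-theoretic representative versus the topological boundary. I expect the choice of a suitable closed representative (for instance the closure of the set of density points) to settle it, given the standing convention that identities hold up to null sets.
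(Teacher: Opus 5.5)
Your proof is correct and follows essentially the same route as the paper: write $z=(1+q)a+b$ with $a\in A_0$, bound $|z-a|\le C_R\,\delta$, note that $a\in A_0$ and $z\notin A_0$ force ${\rm dist}(z,\partial A_0)\le |z-a|$, and conclude with the Minkowski content bound (the paper uses the sharper estimate $|a|\le \frac{R+q}{1+q}\le R$, which gives $|z-a|\le (R+1)\delta$). As for your caveat, you do not need to change representative, and passing to density points is not harmless in general because the Minkowski sums defining $A_t$ are not invariant under null modifications of $A_0$; the segment argument applies verbatim to the topological boundary of the fixed representative used to define $A_t$, which is the set the hypothesis refers to.
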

\begin{proof}
     We claim that, for every $R>1$ and
every $t\in[0,1]$,
\begin{equation}\label{small-ext-containment}
(G^{(t)}\setminus G^{(0)})\cap B_R
\subseteq
B_R\cap \partial_{(R+1)t}A_0 .
\end{equation}

Indeed, let
\[
x\in (G^{(t)}\setminus G^{(0)})\cap B_R \subset A_t\setminus A_0.
\]
By the definition of $A_t$, there exist $q\in[0,t]$,
$y\in A_0$, and $z\in B_q$ such that
\[
x=(1+q)y+z .
\]
Since $x\in B_R$, we have
\[
|y|\le \frac{|x|+|z|}{1+q}
\le \frac{R+q}{1+q}\le R.
\]
Hence,
\[
|x-y|\le q|y|+|z|
\le (R+1)q
\le (R+1)t .
\]
Since $y\in A_0$ and $x\notin A_0$, this shows ${\rm dist}(x,\partial A_0)\le  (R+1)t$, i.e.,~\eqref{small-ext-containment}. In particular, 
\[
\big|(G^{(\delta)}\setminus G^{(0)})\cap B_R\big|
\le
|B_R\cap \partial_{(R+1)\delta}A_0|\le C_R \delta,
\]
where the last inequality holds by the definition of finite upper Minkowski content (recall Definition~\ref{def:minkowski_content}). This is the desired result.  
\end{proof}

We finally have: 

 \begin{proof}[Proof of Theorems~\ref{thm:main1} and \ref{thm:main2}]
 These results now follow by Theorem~\ref{GENER:REGULA0} together with Lemma~\ref{lem:small-exterior-perturbation0}. 

We notice that the assumption that $G^{(t)}$ is strictly increasing in
Theorem~\ref{GENER:REGULA0}
is harmless: if for some $0\le \tau < t \le 1$ we had $G^{(t)} = G^{(\tau)}$, then $A_\tau = \R^n$ (all up to sets of measure zero), and the minimizer is saturated. Indeed, 
as in the proof of Lemma~\ref{lem:linear-retraction}, one has
\[
\lambda A_\tau+B_r\subseteq A_t.
\]

Assume now that $A_t = A_\tau$. Then, up to sets of measure zero, we have $\lambda A_\tau + B_r \subset A_\tau$. Since $B_1\subset A_\tau$, this forces $B_{\lambda+r} = \lambda B_1 + B_r \subset A_\tau$, and iteratively, $B_{R_k}\subset A_\tau$ where $R_0 = 1$ and $R_{k+1} = \lambda R_k+r$. Since $\lambda > 1$, this implies $A_\tau = \R^n$. 
\end{proof}

\section*{Acknowledgments}

The first author was supported by the Australian Research Council Future Fellowship FT230100333 ``New perspectives on nonlocal equations''. 

The second author was supported by the Swiss National Science Foundation (SNF grant agreement PZ00P2\_208930), the Swiss State Secretariat for Education, Research and Innovation (SERI) under contract number MB22.00034, and the AEI project PID2024-156429NB-I00 (Spain).

The third author was supported by the Australian Laureate Fellowship FL190100081 ``Minimal surfaces, free boundaries and partial differential equations''.

\vfill
\end{document}